\journal{Computer-Aided Design}
\theoremstyle{plain}
\newtheorem{theorem}{Theorem}[section]
\newtheorem{corollary}[theorem]{Corollary}
\newtheorem{remark}[theorem]{Remark}
\newtheorem{thm}{Theorem}
\theoremstyle{definition}
\numberwithin{equation}{section}
\numberwithin{theorem}{section}
\newcommand{\wavespeed}{\mathfrak{c}}
\newcommand{\vertiii}[1]{{\left\vert\kern-0.25ex\left\vert\kern-0.25ex\left\vert #1 
    \right\vert\kern-0.25ex\right\vert\kern-0.25ex\right\vert}}
\begin{document}

\begin{frontmatter}

\title{Anisotropic Delaunay Hypervolume Meshing for Space-Time Applications: Point Insertion, Quality Heuristics, and Bistellar Flips}

\author{Jude T. Anderson \corref{mycorrespondingauthor}}

\cortext[mycorrespondingauthor]{Corresponding author}
\ead{jta29@psu.edu}


\author{David M. Williams} 

\address{Department of Mechanical Engineering, The Pennsylvania State University, University Park, Pennsylvania 16802}

\address{Laboratories for Computational Physics and Fluid Dynamics, Naval Research Laboratory, Washington, DC 20375}

\fntext[fn1]{Distribution Statement A: Approved for public release. Distribution is unlimited.}

\begin{abstract}
    This paper provides a comprehensive guide to generating unconstrained, simplicial, four-dimensional (4D), hypervolume meshes for space-time applications. While several universal procedures for constructing unconstrained, $d$-dimensional, anisotropic Delaunay meshes are already known, many of the explicit implementation details are missing from the relevant literature for cases in which $d \geq 4$.
    As a result, the purpose of this paper is to provide explicit descriptions of the key components in the 4D meshing algorithm: namely, the point-insertion process, geometric predicates, element quality heuristics, and bistellar flips. This paper represents a natural continuation of the work which was pioneered by Anderson et al.~in ``Surface and hypersurface meshing techniques for space-time finite element methods", Computer-Aided Design, 2023. In this previous paper, hypersurface meshes were generated using a novel, trajectory-tracking procedure. In the current paper, we are interested in generating coarse, 4D hypervolume meshes (boundary meshes) which are formed by sequentially inserting points from an existing hypersurface mesh. In the latter portion of this paper, we present numerical experiments which demonstrate the viability of this approach for a simple, convex domain. Although, our main focus is on the generation of hypervolume boundary meshes, the techniques described in this paper are broadly applicable to a much wider range of 4D meshing methods. We note that the more complex topics of \emph{constrained} hypervolume meshing, and boundary recovery for non-convex domains will be covered in a companion paper. 
\end{abstract}

\begin{keyword}
Hypervolume meshing \sep Anisotropic Delaunay \sep Geometric predicates \sep Bistellar flips \sep Space time \sep Four dimensions
\MSC[2010] 65M50 \sep 52B11 \sep 31B99 \sep 76M10
\end{keyword}

\end{frontmatter}

\section{Introduction}
\label{sec;introduction}

The fundamental goal of volume meshing is to accurately partition a given polyhedral domain by constructing a non-degenerate constrained mesh that includes all of the domain's original vertices and bounding surfaces. A variety of methodologies have been devised to address the complexities of this problem in multiple dimensions, up to and including three dimensions (3D). Within the past few decades, researchers have taken interest in developing new strategies aimed towards four-dimensional (4D), constrained hypervolume meshing---a concept with clear applications to 3D+$t$ space-time problems. The most attractive feature of 4D space-time meshes is their ability to model arbitrary, large-scale boundary motions within a single spatial-temporal framework. This directly facilitates more efficient and accurate simulations of fluid-structure interaction (FSI), and other multi-material and multi-phase problems. For problems with significant boundary motion, one must consider whether the motion is known \emph{a priori} or \emph{a posteriori}. If the motion is known \emph{a priori}, then the 4D mesh generation process is conceptually straightforward, as the domain boundary location is known for all time. However, if the boundary motion is known \emph{a posteriori}, more sophistication is required. In this case, one may use a space-time method in conjunction with boundary-deformation or a predictor-corrector approach. This results in a space-time method with weakly-coupled boundary motion. Conversely, one may simultaneously solve for the solution and the mesh in a fully-coupled fashion. In the latter case, a 4D mesh can serve as an initial guess for the fully-coupled process. Generally speaking, in all of the cases described above, 4D meshes have considerable utility.

To our knowledge, current space-time meshing technologies are mostly limited to semi-structured and fully-structured meshes. In fact, many of these technologies bypass the well-known challenges associated with fully-unstructured hypervolume meshing by using tetrahedral extrusion and prism splitting strategies (see below). As a result, there does not appear to be an explicit method for generating fully-unstructured, boundary-conforming, hypervolume meshes for space-time applications in the literature. In what follows, we begin with a short review of current meshing technologies, and then go on to describe our unique approach to fully-unstructured, space-time hypervolume meshing.

\subsection{Background}

The most widely used methods for volume meshing include the Advancing Front method~\cite{lohner1988generation,george1994advancing}, the Delaunay-based method~\cite{hang2015tetgen,gruau20053d}, and a hybrid of both methods~\cite{mavriplis1995advancing,frey19983d}. The Advancing Front method starts with an existing surface mesh and then generates one element at a time until the associated volume mesh is completed, (i.e.~all unique element faces belong to the initial surface mesh). A key strength of this method is that the original surface mesh is maintained at all times. As a result, the Advancing Front method does not require a post-processing procedure in order to recover the boundary. Unfortunately, the method fails to maintain a valid volume mesh which is free of gaps/cavities at intermediate steps of the iterative process. Therefore, if the method fails for any reason, one is left with unclosed gaps/cavities in the mesh. In an alternative fashion, the Delaunay-based method begins with a simplicial bounding box that contains all of the points associated with the desired volume mesh. These points are inserted one-by-one using a Bower-Watson strategy~\cite{bowyer1981computing, watson1981computing} until the resulting volume mesh includes all of the points. The most attractive feature of this method is that, throughout all iterations, the intermediate volume meshes remain valid, as they do not contain any gaps/cavities. However, it is important to note that meshes generated by this method always require post-processing procedures to recover the boundary. Therefore, if the post-processing procedure fails for any reason, then part of the boundary will be missing.  Lastly, the hybrid Advancing-Front/Delaunay method usually constructs a new volume mesh by using an existing, coarse volume mesh as a starting point. This coarse volume mesh is called a \emph{boundary mesh}, and it consists solely of points from the original surface mesh.  The initial mesh is treated as an empty volume, and new elements are generated through an Advancing Front scheme, where the points on the front are inserted using a Delaunay kernel. Throughout this process, it is possible to retain a valid intermediate mesh at each iteration. Since the hybrid approach generally requires an initial volume mesh, it is more correct to describe it as a mesh \emph{refinement} method, as opposed to a mesh \emph{generation} method. Lo has given detailed summaries of all three methods in a recent book~\cite{lo2014finite}. The Delaunay-based method has proven to be the most commonly used of the three due to its inherent flexibility and well-understood mathematical properties.  An in-depth review of this method can be found in the book by Cheng et al.~\cite{cheng2013delaunay}. 

The traditional volume meshing approaches (above) are often used to construct isotropic meshes. However, anisotropy can still be introduced through a sizing function or a metric field. A sizing function is a scalar field $\phi(x_1, \ldots, x_d)$ which specifies a desirable element size at a point $(x_1, \ldots, x_d)$, where $d$ is the number of dimensions. A metric field $M(x_1, \ldots, x_d)$ is a tensor field which specifies the size \emph{and} orientation of an element at a point. The metric field is especially useful because it enables us to scale different coordinate directions in different ways. Naturally, sizing functions are essentially just metric fields which take the form $\phi \mathbb{I}$, where $\mathbb{I}$ is the identity matrix. We can construct \emph{anisotropic Delaunay meshes}, or \emph{anisotropic Advancing-Front meshes} using metric fields. For example, an anisotropic Delaunay mesh can be constructed by performing the standard Bowyer-Watson point-insertion strategy in metric space. In particular, lengths, dot products, volumes, and related quantities can be computed using the quadratic form induced by the metric field. In addition, due to the anisotropy of the metric field, edges which have a unit length in the metric space can have much larger or smaller lengths in physical space. In this way, anisotropy can be naturally introduced into the Delaunay meshing process. A more detailed overview of anisotropic Delaunay meshes appears in the introduction of~\cite{boissonnat2015anisotropic}, and in Section 2.1 of this paper.

We note that virtually all of the previous work (above) is completely general; however most practical implementations and explicit descriptions have focused on mesh generation in two or three spatial dimensions. In what follows, we turn our attention to mesh-generation techniques which were primarily developed for 4D problems.

In~\cite{behr2008simplex}, Behr developed a popular, extrusion-based method for generating 3D+$t$ space-time hypervolume meshes. The method starts by taking a 3D unstructured tetrahedral volume mesh and extruding its elements through time in order to generate tetrahedral prisms. These prisms are subsequently subdivided into pentatopes (4-simplexes), resulting in a semi-structured 4D hypervolume mesh. The split of each tetrahedral prism is performed carefully so that the resulting pentatopes satisfy the Delaunay criterion. This process is also applicable to 2D+$t$ cases. A number of researchers have applied Behr's method to their work, see~\cite{pauli2017stabilized, von2019simplex, karyofylli2019simplex, make2022spline, karyofylli2022simplex, von2023time, karabelas2015generating, lehrenfeld2015nitsche} for details.

Von Danwitz et al.~\cite{von2021four} expanded Behr's extrusion-based method by using an elastic mesh update method to incorporate time-variant topology. In short, a 4D mesh was progressively conformed to varying surface topology by gradually deforming its elements without changing its original connectivity. 
We note that the elastic-deformation strategy is very effective for small and moderate-scale deformations of the boundary. However, we do not expect the robustness of the method to be maintained in the presence of large-scale surface deformations, and highly anisotropic near-wall meshes.

The process of extruding unstructured tetrahedral meshes in order to obtain hypervolume meshes of pentatopes or tetrahedral prisms is not limited to Behr's work. A significant amount of work on this topic has been performed by Tezduyar and coworkers, (see the review in~\cite{takizawa2023space}). In addition, extrusion-based methods have often been adapted to model purely rotational motions. For example, Wang and Persson~\cite{wang2015high} used this approach in 2D+$t$ to simulate a rotating cross geometry. Their approach combines space-time extrusion-meshing techniques with a \emph{sliding-mesh} approach. Here, a tetrahedral mesh is subdivided into three different regions: a rotating region, a buffer region, and a stationary region. The mesh accommodates large-scale rotations through edge reconnections in the buffer region between the stationary and rotating regions. The same concept was extended to 3D+$t$ for simple cases~\cite{wang2015discontinuousthesis}, including a rotating ellipsoid.  A similar methodology was developed by Horváth and Rhebergen~\cite{horvath2022conforming}. We note that the viability of the sliding mesh approach only holds for purely rotational boundary motion.

The space-time mesh generation method of \emph{pitching tents}~\cite{ungor2002pitching, erickson2005building, abedi2004spacetime} is similar, in a superficial fashion, to traditional Advancing Front techniques. Here, new vertices are generated by identifying existing vertices on the advancing space-time front, and projecting these vertices along the temporal direction in accordance with carefully chosen conditions, (often based on characteristic curves). By tessellating the neighboring faces of an original vertex with its corresponding new vertex, one may construct new, higher-dimensional elements. The tent-pitching method has been applied to hyperbolic systems~\cite{gopalakrishnan2015tent,gopalakrishnan2017mapped,drake2022convergence} and the Maxwell equations~\cite{gopalakrishnan2020explicit}. 
This approach may not be well-suited for problems which are dominated by diffusive phenomena, or other isotropic processes. Fortunately, this issue has been remedied by the next generation of such methods, as discussed below.

The traditional tent-pitching approaches (above) were developed to meet the requirements of causal Space-time Discontinuous Galerkin (cSDG) methods, and related approaches~\cite{richter1994explicit,lowrie1998space,falk1999explicit}. More recently, mesh generation techniques for cSDG methods have seen significant improvement, as exemplified by the excellent work of Haber and colleagues~\cite{miller2008spacetime,palaniappan2008sub,abedi2017effect}. The improved tent-pitching-inspired approaches can be called `adaptive cSDG methods'. In these approaches, the elements of the mesh are grouped into clusters, called patches. These patches satisfy causality and progress constraints, (i.e. they ensure the locality of causation \emph{and} that the advancing space-time front reaches the end of the temporal domain). These new methods allow for a larger range of patch types, which aid in the adaptive meshing process, and enable the alignment of element interfaces with solution discontinuities.  We recommend that the interested reader consult the thesis of Howard~\cite{howard2019spacetime} for a more comprehensive description of this technology.

The topic of \emph{fully-unstructured}, 4D mesh generation has received very limited attention in the literature. The most relevant work on this topic is that of Boissonnat et al.~\cite{boissonnat2009incremental}, where they construct unconstrained, isotropic Delaunay meshes for `medium' numbers of  dimensions, ($d = 2-6$). Their focus is on maximizing efficiency while minimizing the memory footprint. They achieve very good performance through careful selection of the appropriate data-storage structure, point-sorting algorithm, and adjacency search protocol. They do not focus on the $d =4$ case, in particular, nor do they specialize their algorithm for space-time applications. More recently, Foteinos and Chrisochoides~\cite{foteinos20154d}, have developed an isotropic Delaunay-based method to generate unstructured meshes for 4D medical images. The main focus of this work is the development of a 4D Delaunay refinement technique for removing sliver elements. This work does not contain an explicit description of several key aspects of the Delaunay algorithm, including the point insertion process and the procedure for computing geometric predicates in 4D. This work also does not provide a comprehensive treatment of general space-time applications.

While higher-dimensional meshing techniques are often absent from the literature, they are contained (and maintained) in existing software. We note that CGAL (the Computational Geometry Algorithms Library~\cite{fabri2009cgal}) and Qhull~\cite{barber1996quickhull} are open-source libraries for Delaunay meshing algorithms, and related meshing methods. Both codes have the ability to generate unconstrained, isotropic Delaunay meshes in higher dimensions, $d\geq 4$. The precise details of the Delaunay algorithms for the $d=4$ case are not explicitly described in print; however, they are documented in the codes themselves. These codes are excellent resources for practitioners who are looking to develop their own higher-dimensional isotropic Delaunay mesh generators. However, they are missing some important features which may facilitate the development of anisotropic Delaunay meshes for space-time applications, (as we will discuss). 

Interestingly enough, while fully-unstructured 4D mesh generation has seen limited attention in the literature, 4D mesh refinement has been thoroughly explored in recent years. This work has been performed by researchers such as Caplan et al.~\cite{caplan2020four}, Neumüller and Steinbach~\cite{neumuller2011refinement}, and Belda-Ferrín et al.~\cite{belda2023conformal}. A concise review of the literature on this topic is contained in~\cite{caplan2019four}.

\subsection{Motivation and Paper Overview}

The existing research on 4D space-time hypervolume meshing (above) relies heavily on extrusion-based meshing approaches due to their inherent conformity to the boundary, and overall simplicity of implementation. However, these methods are limited when considering arbitrary, large-scale boundary motion and are not fully unstructured in both space and time. In addition we believe that, despite the effectiveness of the tent-pitching and adaptive cSDG strategies, other approaches are needed. In particular,  because of the complexity of space-time applications, multiple meshing options are desirable for flexibility and robustness purposes. For these reasons, we are developing a general constrained, anisotropic Delaunay, 4D hypervolume meshing technique which maintains boundary conformity with fully-unstructured, space-time hypersurface meshes. In previous work~\cite{anderson2023surface}, we developed an explicit method for generating space-time hypersurface meshes. It remains for us to develop fully-unstructured, anisotropic Delaunay-based, hypervolume meshing techniques which conform to these hypersurface meshes. 

In this work, we introduce a new, explicit description of anisotropic Delaunay, 4D, unstructured hypervolume meshing. Our methodology assumes the existence of a predefined hypersurface mesh. Thereafter, the points of the hypersurface mesh are used to construct a coarse, hypervolume mesh -- also referred to as a \emph{hypervolume boundary mesh}. 
Once the hypervolume mesh is generated from the points of the hypersurface mesh, the bounding tetrahedral facets of the hypersurface mesh can be recovered. However, due to the length and complexity of the 4D boundary recovery process, its description will be reserved for subsequent work. In the present work, we will provide novel, explicit descriptions of important aspects of the unconstrained, 4D anisotropic Delaunay mesh generation process. In addition, we will describe post-processing techniques for quality assessment and improvement purposes. The latter techniques are based on the construction of 4D bistellar flips (Pachner moves), the majority of which have never been reported in the literature. A summary of the key contributions appears below:

\begin{itemize}
    \item A rigorous justification of anisotropic Delaunay meshing techniques for space-time applications. To our knowledge, we develop the first, fully-unstructured, space-time Delaunay meshing framework.
    \item Delaunay-conforming subdivisions of the tesseract (4-cube) into pentatopes. Two of these subdivision strategies appear to be new.
    \item A comprehensive description of predicates for anisotropic Delaunay meshes in 4D. To our knowledge, these are the first explicit descriptions of metric-weighted predicates in the literature.
    \item The development of three explicit algebraic heuristics for evaluating the quality of pentatope elements. To our knowledge, these are the first explicit descriptions of such heuristics in the literature.
    \item A complete enumeration of conventional bistellar flips in 4D, along with the introduction of new, extended flips. 
    \item A rigorous grid convergence study which measures the hypervolume error of a sequence of meshes, and obtains the expected order of accuracy. 
\end{itemize}
From our perspective, the present work is clearly distinct from that of previous software, including CGAL~\cite{fabri2009cgal} and Qhull~\cite{barber1996quickhull}, as well as previous researchers, including Boissonnat et al.~\cite{boissonnat2009incremental}, and Foteinos and Chrisochoides~\cite{foteinos20154d}. In particular, this previous work provides a general overview of isotropic Delaunay meshing for higher-dimensional cases, (dimensions greater than 3); however, we focus exclusively on the 4D case, and provide explicit descriptions of the anisotropic Delaunay meshing process. In addition, we provide new contributions, in the form of a space-time Delaunay meshing framework, quality heuristics, and bistellar flips (see above).

The format of this paper is as follows: In section~\ref{sec;compatibility}, we discuss anisotropic Delaunay meshing and its excellent properties for space-time applications. Next, in section~\ref{sec;preliminaries}, we discuss more general concepts related to hypervolume meshing. In sections~\ref{sec;point_insertion} and \ref{sec;metric_predicates}, we discuss the specifics of our point-insertion algorithm, including bounding-box subdivision strategies and 4D metric-weighted  predicates. In section~\ref{sec;quality_improvement}, we introduce quality heuristics for pentatopes, and a new class of bistellar flips for the purposes of improving these heuristics. In section~\ref{sec;numerical_examples}, we present the results of numerical experiments which demonstrate the effectiveness of our mesh-generation and quality-heuristic methods. Finally, in section~\ref{sec;conclusion}, we conclude with a brief discussion of future work on 4D boundary recovery.

\section{Delaunay Meshes and Space-Time Applications} \label{sec;compatibility}

The concept of developing Delaunay meshing techniques for space-time applications is somewhat controversial. Many scientists and engineers view space and time as distinctly different regimes, which must be meshed with different techniques. They view space-time as a tensor product of the Euclidean space $\mathbb{E}^3$ and the real line $\mathbb{R}$. The idea of creating fully unstructured Delaunay meshes directly in $\mathbb{R}^4$ is met with justifiable skepticism. Usually, the reasons for this skepticism can be traced back to one or more of the following issues:

\begin{enumerate}
    \item \emph{Consistency}. The standard inner product on $\mathbb{R}^4$ is dimensionally inconsistent. For example, if we define two points $V_1 = \left[x_1, y_1, z_1, t_1\right]^T$ and $V_2 = \left[x_2, y_2, z_2, t_2\right]^{T}$, the square of the distance between them is given by
    \begin{align*}
        (V_1 -V_2)^{T} (V_1-V_2) = (x_1-x_2)^2 + (y_1-y_2)^2 + (z_1-z_2)^2 + (t_1-t_2)^2.
    \end{align*}
    The dimensions of this inner product are 
    \begin{align*}
        \left(\mathrm{length}\right)^2 + \left(\mathrm{time}\right)^2,
    \end{align*}
    which are inconsistent. 
    \item \emph{Optimality}. One of the main motivations for using Delaunay meshes is their optimality for certain, elliptic boundary-value problems in $\mathbb{E}^2$. In fact, one can obtain \emph{a priori} error estimates which demonstrate the superior effectiveness of Delaunay meshes in this context. However, it is unclear how these optimality properties can be extended to space-time problems. 
    \item  \emph{Slivers}. Delaunay meshes often contain slivers, i.e., elements which have very small volumes, and high aspect ratios. There are well-developed procedures for identifying and removing slivers from Delaunay meshes in $\mathbb{E}^3$. It is not immediately clear how one can rigorously define sliver elements for space-time applications in four dimensions, due to the inconsistency of the standard inner-product dimensions, (see point 1). In addition, a process for removing these slivers is unknown. 
    \item \emph{Causality}. A numerical method operating on a fully-unstructured mesh in $\mathbb{R}^4$ may not maintain causality. In particular, there are concerns about the fully unstructured nature of the mesh, which allows elements with vastly different time scales to interact with one another. There are concerns about the physical accuracy of this process, and in some cases, tent-pitching or similar meshing techniques which guarantee local causality are seen as the only viable option.
    \item \emph{Cost}. Delaunay meshing of an entire, four-dimensional space-time domain is seen as prohibitively expensive. High-fidelity simulations of three-dimensional problems are already computationally demanding, and it seems like adding another dimension makes the problem intractable. 
\end{enumerate}

In what follows, we will address each of these concerns, and provide an important alternative perspective.

\subsection{On Consistency} \label{consistency_section}

The dimensional inconsistency of the standard inner product in $\mathbb{R}^4$ can be addressed by scaling the temporal component of each 4-vector by a characteristic speed $\wavespeed = \wavespeed(x,y,z,t)$. In particular, we can compute the square of the distance as follows
\begin{align*}
    (V_{1}-V_{2})^{T} M (V_1-V_{2}) = (x_1 -x_2)^2 + (y_1- y_2)^2 + (z_1-z_2)^2 + \wavespeed^2(x,y,z,t) (t_1-t_2)^2.
\end{align*}
where $M$ is a symmetric, positive-definite metric tensor in $\mathbb{R}^{4 \times 4}$
\begin{align}
    M = M(x,y,z,t) = \begin{bmatrix} 1 & 0 & 0 & 0\\
    0 & 1 & 0 & 0 \\
    0 & 0 & 1 & 0 \\
    0 & 0 & 0& \wavespeed^{2}(x,y,z,t) \end{bmatrix}.
\end{align} \label{metric_space_time}
The dimensions of this modified inner product are
\begin{align*}
    \left(\mathrm{length}\right)^2 + \left(\mathrm{length}/\mathrm{time}\right)^2 \left(\mathrm{time}\right)^2 = \left(\mathrm{length}\right)^2,
\end{align*}
which are consistent. In a nearly identical fashion, one can replace the vectors $V_1$ and $V_2$ with vectors
\begin{align*}
    \mathcal{V}_1 = \left[x_1,y_1,z_1, \wavespeed(x,y,z,t) t_1\right]^{T}, \qquad \mathcal{V}_2 = \left[x_2,y_2,z_2, \wavespeed(x,y,z,t) t_2\right]^{T}.
\end{align*}
or equivalently
\begin{align*}
    \mathcal{V}_1 = \left[x_1,y_1,z_1,  \tilde{t}_1\right]^{T}, \qquad \mathcal{V}_2 = \left[x_2,y_2,z_2, \tilde{t}_2\right]^{T},
\end{align*}
where $\tilde{t}_1 = \wavespeed(x,y,z,t) t_1$ and $\tilde{t}_2 = \wavespeed(x,y,z,t) t_2$. Evidently,
\begin{align*}
        (\mathcal{V}_{1}-\mathcal{V}_{2})^{T} (\mathcal{V}_1-\mathcal{V}_2) = (x_1 -x_2)^2 + (y_1- y_2)^2 + (z_1-z_2)^2 + \wavespeed^2(x,y,z,t) (t_1-t_2)^2.
\end{align*}
In both cases, the dot product takes an identical, dimensionally consistent form. The vectors $\mathcal{V}_1$ and $\mathcal{V}_2$ are inspired by the work of physicists, who sometimes refer to these vectors as the 4-position vectors or `event' vectors~\cite{misner1973gravitation}.

We note that the following formulation for the metric is also possible
\begin{align*}
    M =  \begin{bmatrix}
        1 & 0 & 0& m_{14}(x,y,z,t) \\
        0 & 1 & 0& m_{24}(x,y,z,t) \\
        0 & 0 & 1& m_{34}(x,y,z,t) \\
        m_{14}(x,y,z,t) & m_{24}(x,y,z,t) & m_{34}(x,y,z,t) & m_{44}(x,y,z,t)
    \end{bmatrix},
\end{align*}
where $m_{14}$, $m_{24}$, and $m_{34}$ are characteristics speeds, and $m_{44}$ is a characteristic speed squared. The resulting inner product is dimensionally consistent, but the physical meaning is unclear. Therefore, we defer to the formulation in~Eq.~\eqref{metric_space_time}.

The metric $M = M(x,y,z,t)$ is assumed to be a continuous function of space and time. For implementation purposes, we insist that $M$ is evaluated at a single point, or at a finite series of points in the space-time domain. Consider a particular point of interest $q = (x_q, y_q, z_q, t_q)$. This point, for example, could be the next candidate for insertion by our Delaunay algorithm. We can then choose $M = M_q = M(x_q,y_q,z_q,t_q)$ for each point $q$, enforce the Delaunay criterion using the modified inner-product, and still obtain a valid Delaunay mesh, (see~\cite{borouchaki1997delaunay}, section 5.2). But, this fact comes with an important caveat: a Delaunay mesh based on the modified inner product (above) is technically not a standard Delaunay mesh, but rather an anisotropic Delaunay mesh. Delaunay meshes defined in this fashion, with a generic metric tensor $M$, were originally pioneered by the work of Borouchaki, George, Frey, and coworkers~\cite{borouchaki1997delaunay,borouchaki1997delaunayb,dobrzynski2008anisotropic}, and extended by Boissonnat, Wormser, Yvinec, and coworkers~\cite{boissonnat2015anisotropic,boissonnat2015anisotropicb}. More recent work in this area appears in~\cite{rouxel2016discretized,boissonnat2018geometric,boissonnat2018delaunay,boissonnat2019anisotropic}. The key idea is that anisotropy is introduced by the metric field $M(x,y,z,t)$: namely, the Delaunay criterion is enforced in metric space, and this results in an anisotropic mesh in physical space, (as mentioned previously). Furthermore, different metric fields can induce different anisotropic Delaunay meshes for the same (or similar) sets of points. This may disturb some readers, but we note that metric-tensor induced anisotropy can result in optimal meshes in some scenarios, (as will be shown). Also, even in the absence of optimality, we note that the anisotropic Delaunay approach enables practitioners to produce valid initial meshes for space-time applications in $\mathbb{R}^4$. This is important, as oftentimes, the primary challenge is to simply produce \emph{an} initial mesh, not necessarily an optimal mesh. The precise details of this initial mesh may be less important, as it will often be significantly modified during the course of an adaptive-meshing process.


\subsection{On Optimality} \label{optimal_sub_sec}

It is well-known that Delaunay meshes are optimal for elliptic boundary-value problems in $\mathbb{E}^2$~\cite{rippa1990minimum,rippa1990minimal,powar1992minimal}. In particular, they act as `energy minimizing triangulations,' as they minimize the energy functional of conventional finite element methods which operate on elliptic problems. In addition, a Delaunay mesh minimizes the error as measured by the energy norm, (see~\cite{rippa1990minimum}). 

In order to fix ideas, let us review the argument which was originally presented in~\cite{rippa1990minimum}. Consider the following elliptic problem
\begin{align*}
    -\Delta u = f \quad \mathrm{in} \quad \Omega, \qquad u\Big|_{\partial \Omega} = 0,
\end{align*}
where $\Delta = \nabla \cdot \nabla$ is the Laplacian, $\nabla = \left(\frac{\partial}{\partial x}, \frac{\partial}{\partial y} \right)$ is the gradient, $u \in H^{1}_{0}(\Omega)$, $f \in L_{2}(\Omega)$, and $\Omega$ is a simply-connected spatial domain in $\mathbb{R}^2$. The associated finite element method is 
\begin{align}
    a(u_h,v_h) = L(v_h), \label{fem_form}
\end{align}
\begin{align*}
    a(u_h, v_h) = \int_{\Omega} \left(\frac{\partial u_h}{\partial x} \frac{\partial v_h}{\partial x} + \frac{\partial u_h}{\partial y} \frac{\partial v_h}{\partial y} \right) dx dy, \qquad L(v_h) = \int_{\Omega} fv_h \, dx dy,
\end{align*}
where $u_h \in V_h$, $v_h \in V_h$, and $V_h \subset H^{1}_{0}(\Omega)$. This problem has a unique solution if $a(u_h,v_h)$ is symmetric, continuous, and coercive. We can establish symmetry of $a$ by inspection. We can also show that
\begin{align*}
    \left|a(u_h,v_h) \right| \leq \gamma \left\|u_h\right\| \left\|v_h \right\|, \qquad a(v_h,v_h) \geq \psi \left\| v_h \right\|^2, 
\end{align*}
which establishes continuity and coercivity of $a$. Here, $\gamma \in \mathbb{R}^{+}$, $\psi \in \mathbb{R}^{+}$, and
\begin{align*}
    \left\| v_h \right\| = \left[ \int_{\Omega} \left( v_{h}^{2} + \left(\frac{\partial v_h}{\partial x} \right)^2  + \left(\frac{\partial v_h}{\partial y} \right)^2 \right) dx dy \right]^{1/2}.
\end{align*}
Note that coercivity (above) follows from the standard Poincar\'e inequality
\begin{align*}
    \int_{\Omega} v_{h}^{2} \, dx dy \leq C \int_{\Omega} \left( \left(\frac{\partial v_h}{\partial x} \right)^2  + \left(\frac{\partial v_h}{\partial y} \right)^2\right) dx dy.   
\end{align*}
In addition, we can compute the following energy norm
\begin{align*}
    \left\| v_h \right\|_{a} = \sqrt{a(v_h, v_h)}.
\end{align*}
Next, we can define the energy functional $J(v_h)$ as follows
\begin{align}
    J(v_h) = a(v_h,v_h) - 2L(v_h).
    \label{energy_func}
\end{align}
The solution $u_h$ of Eq.~\eqref{fem_form} and the minimizer of $J(v_h)$,  are directly related. In particular, the following statement holds true
\begin{align*}
    J(u_h) = \min_{v_h \in V_h} J(v_h).
\end{align*}
Now, let $u_{h,1} \in V_h$ and $u_{h,2} \in V_h$ be two different solutions on meshes $\mathcal{T}_1$ and $\mathcal{T}_2$, respectively. We assume that these meshes contain the same points, but possess different connectivity. We can relate the energy functionals and the energy norms of the two solutions as follows
\begin{align*}
    \left\| u - u_{h,1} \right\|_{a}^{2} - \left\| u - u_{h,2} \right\|_{a}^{2} = J(u_{h,1}) - J(u_{h,2}),
\end{align*}
or equivalently
\begin{align*}
    \left\| u - u_{h,1} \right\|_{a}^{2}  = \left(J(u_{h,1}) - J(u_{h,2})\right) + \left\| u - u_{h,2} \right\|_{a}^{2}.
\end{align*}
If $\mathcal{T}_1$ is a standard (isotropic) Delaunay mesh, then we are guaranteed that $J(u_{h,1}) \leq J(u_{h,2})$ in accordance with~\cite{rippa1990minimum,rippa1990minimal}, and furthermore, that
\begin{align*}
     \left\| u - u_{h,1} \right\|_{a} \leq \left\| u - u_{h,2} \right\|_{a},
\end{align*}
In other words, the Delaunay mesh yields the minimum error between the numerical solution $u_h$ and the exact solution $u$, as it's the only mesh which is guaranteed to minimize $J(u_h)$.

Now, we claim that the same Delaunay optimality property can be extended to space-time problems in $\mathbb{R}^2$, via the following coordinate substitution 
\begin{align}
    x \Rightarrow x, \qquad y \Rightarrow \wavespeed(x) t, \label{key_sub}
\end{align}
where for simplicity's sake we assume that $\wavespeed \neq \wavespeed(t)$, $\wavespeed(x)>0$, and $\wavespeed(x) \in C^{\infty}(\mathbb{R})$; in other words, $\wavespeed(x)$ is a positive smooth scalar function. 

In order to establish the claim above, one may consider the following problem
\begin{align}
    -\left(\widetilde{\Delta} \widetilde{u} + \alpha(x) \frac{\partial \widetilde{u}}{\partial x} \right)  = g \quad \mathrm{in} \quad \widetilde{\Omega}, \qquad u\Big|_{\partial \widetilde{\Omega}} = 0, \label{space_time_example}
\end{align}
where $\widetilde{\Delta} = \widetilde{\nabla} \cdot \widetilde{\nabla}$ is the space-time Laplacian, $\widetilde{\nabla} = \left(\frac{\partial}{\partial x}, \frac{1}{\wavespeed(x)} \frac{\partial}{\partial t} \right)$ is the space-time gradient, $\alpha(x) = \frac{1}{\wavespeed(x)} \frac{\partial \wavespeed(x)}{\partial x}$ is a spatial advection-rate coefficient, $\widetilde{u} \in H^{1}_{0}(\widetilde{\Omega})$, $g \in L_{2}(\widetilde{\Omega})$, and $\widetilde{\Omega}$ is a simply-connected space-time domain in $\mathbb{R}^2$. 

We can replace $\widetilde{u}$ with $\widetilde{u}_h$ in Eq.~\eqref{space_time_example}, multiply by a test function $\widetilde{v}_h$, multiply by the scaling function $\wavespeed(x)$, and integrate over the domain in order to obtain
\begin{align*}
    -\int_{\widetilde{\Omega}} \left( \frac{\partial^2 \widetilde{u}_h}{\partial x^2} + \frac{1}{\wavespeed^2(x)}\frac{\partial^2 \widetilde{u}_h}{\partial t^2} \right) \widetilde{v}_{h} \wavespeed(x) \, dx dt -\int_{\widetilde{\Omega}} \frac{\partial \wavespeed(x)}{\partial x} \frac{\partial \widetilde{u}_h}{\partial x} \widetilde{v}_h  dx dt = \int_{\widetilde{\Omega}} g\widetilde{v}_{h} \wavespeed(x) \, dx dt,
\end{align*}
or equivalently, after integration by parts
\begin{align*}
    &\int_{\widetilde{\Omega}} \left(\frac{\partial \wavespeed(x)}{\partial x} \frac{\partial \widetilde{u}_h}{\partial x} \widetilde{v}_h + \wavespeed(x) \frac{\partial \widetilde{u}_h}{\partial x} \frac{\partial \widetilde{v}_h}{\partial x} + \frac{1}{\wavespeed(x)} \frac{\partial \widetilde{u}_h}{\partial t} \frac{\partial \widetilde{v}_h}{\partial t} \right) dx dt -\int_{\widetilde{\Omega}} \frac{\partial \wavespeed(x)}{\partial x} \frac{\partial \widetilde{u}_h}{\partial x} \widetilde{v}_h  dx dt \\
    & - \int_{\partial \widetilde{\Omega}} \left( \wavespeed(x) \frac{\partial \widetilde{u}_h}{\partial x} dt - \frac{1}{\wavespeed(x)} \frac{\partial \widetilde{u}_h}{\partial t} dx \right) \widetilde{v}_h = \int_{\widetilde{\Omega}} g\widetilde{v}_{h} \wavespeed(x) \, dx dt.
\end{align*}
The associated finite element method is 
\begin{align}
    b(\widetilde{u}_h,\widetilde{v}_h) =\mathcal{L}(\widetilde{v}_h), \label{fem_form_space_time}
\end{align}
\begin{align*}
    b(\widetilde{u}_h, \widetilde{v}_h) &= \int_{\widetilde{\Omega}} \left(\wavespeed(x) \frac{\partial \widetilde{u}_h}{\partial x} \frac{\partial \widetilde{v}_h}{\partial x}  + \frac{1}{\wavespeed(x)} \frac{\partial \widetilde{u}_h}{\partial t} \frac{\partial \widetilde{v}_h}{\partial t} \right) dx dt, \qquad 
    \mathcal{L}(\widetilde{v}_h) &= \int_{\widetilde{\Omega}} g\widetilde{v}_h \wavespeed(x) \, dx dt,
\end{align*}
where $\widetilde{u}_h \in \widetilde{V}_h$, $\widetilde{v}_h \in \widetilde{V}_h$, and $\widetilde{V}_h \subset H^{1}_{0}(\widetilde{\Omega})$. In order to guarantee the existence and uniqueness of a solution, we require
\begin{align*}
    \left|b(\widetilde{u}_h,\widetilde{v}_h) \right| \leq \varrho \vertiii{\widetilde{u}_h} \vertiii{\widetilde{v}_h}, \qquad b(\widetilde{v}_h,\widetilde{v}_h) \geq \vartheta \vertiii{\widetilde{v}_h}^2, 
\end{align*}
where
\begin{align*}
    \vertiii{v_h} = \left[\int_{\widetilde{\Omega}} \left( \wavespeed(x) \left( \widetilde{v}_{h}^{2} + \left(\frac{\partial \widetilde{v}_h}{\partial x} \right)^2\right)  + \frac{1}{\wavespeed(x)} \left(\frac{\partial \widetilde{v}_h}{\partial t} \right)^2 \right) dx dt \right]^{1/2}.
\end{align*}
Note that, in order to ensure coercivity (above), one requires the following weighted Poincar\'e inequality to hold
\begin{align*}
    \int_{\Omega} \widetilde{v}_{h}^{2} \wavespeed(x) \, dx dy \leq C \int_{\Omega} \left( \wavespeed(x) \left(\frac{\partial \widetilde{v}_h}{\partial x}\right)^2  + \frac{1}{\wavespeed(x)} \left(  \frac{\partial \widetilde{v}_h}{\partial t} \right)^2 \right) dx dy.   
\end{align*}
This is effectively a constraint on $\wavespeed(x)$.
Next, the energy norm and functional are given by
\begin{align}
     \vertiii{\widetilde{v}_h}_{b} = \sqrt{b(\widetilde{v}_h, \widetilde{v}_h)}, \qquad \mathcal{J}(\widetilde{v}_h) = b(\widetilde{v}_h,\widetilde{v}_h) - 2\mathcal{L}(\widetilde{v}_h). \label{energy_function_new}
\end{align}
Now, in accordance with the previous discussion, we can construct two solutions, $\widetilde{u}_{h,1}$ and $\widetilde{u}_{h,2}$ on separate space-time grids $\mathbb{T}_1$ and $\mathbb{T}_2$. We seek to minimize $\mathcal{J}(\widetilde{u}_h)$ on $\mathbb{T}_1$, such that
\begin{align*}
    \vertiii{ \widetilde{u} - \widetilde{u}_{h,1}}_{b}^{2}  = \left(\mathcal{J}(\widetilde{u}_{h,1}) - \mathcal{J}(\widetilde{u}_{h,2})\right) + \vertiii{\widetilde{u} - \widetilde{u}_{h,2}}_{b}^{2}.
\end{align*}
and
\begin{align*}
     \vertiii{ \widetilde{u} - \widetilde{u}_{h,1} }_{b} \leq \vertiii{ \widetilde{u} - \widetilde{u}_{h,2}}_{b}.
\end{align*}
It seems natural to attempt to minimize the energy functional $\mathcal{J}(\widetilde{u}_h)$ by choosing $\mathbb{T}_1$ to be a Delaunay mesh, since this choice is guaranteed to minimize $J(u_h)$ in the isotropic case. It turns out that this intuition is correct, but it comes with an important caveat: the mesh we seek is an \emph{anisotropic} Delaunay mesh. This fact is rigorously shown in~\ref{appendix_rough} through a careful extension of the arguments in~\cite{rippa1990minimal,powar1992minimal}. Broadly speaking, because of the coordinate substitution we performed in Eq.~\eqref{key_sub}, we must obtain a Delaunay mesh with respect to the following metric tensor in $\mathbb{R}^{2\times 2}$
\begin{align*}
    M = \begin{bmatrix}
        1 & 0 \\
        0 & \wavespeed^2(x)
    \end{bmatrix}.
\end{align*}
Equivalently, we must obtain a Delaunay mesh such that the distance between points $P_1 = \left[x_1, t_1\right]^{T}$ and $P_2 = \left[x_2, t_2\right]^{T}$ is measured as follows
\begin{align*}
    d_{M}(P_1, P_2) = (P_{2} -P_{1})^{T} M (P_{2} - P_1) = (\mathcal{P}_2 -\mathcal{P}_1)^{T}(\mathcal{P}_2 - \mathcal{P}_1), 
\end{align*}
where
\begin{align*}
    \mathcal{P}_1 = (x_1, \wavespeed(x) t_1) = (x_1, \widetilde{t}_1), \qquad \mathcal{P}_2 = (x_2, \wavespeed(x) t_2) = (x_2, \widetilde{t}_2).
\end{align*}

\begin{remark}
The space-time problem in Eq.~\eqref{space_time_example} is relevant for practical applications. In particular, we are often interested in computing an initial guess for the solution throughout a space-time domain based on the information on a boundary. This information can be diffused into the domain by solving the four-dimensional analog of Eq.~\eqref{space_time_example}, i.e.,
\begin{align}
    -\widetilde{\Delta} \widetilde{u} = g \quad \mathrm{in} \quad \widetilde{\Omega}, \qquad u\Big|_{\partial \widetilde{\Omega}} = 0, \label{space_time_example_new}
\end{align}
where $\widetilde{\Delta} = \frac{\partial^2}{\partial x^2}+ \frac{\partial^2}{\partial y^2}+ \frac{\partial^2}{\partial z^2}+ \frac{1}{\wavespeed^2(x,y,z)} \frac{\partial^2}{\partial t^2}$ is the space-time Laplacian, $\widetilde{u} \in H^{1}_{0}(\widetilde{\Omega})$, $g \in L_{2}(\widetilde{\Omega})$, and $\widetilde{\Omega}$ is a space-time domain in $\mathbb{R}^4$. We note that a non-zero boundary condition can be accommodated quite easily (see~\cite{rippa1990minimum}), and does not affect the optimality analysis given above. In particular, the optimality of Delaunay triangulations in two dimensions is still maintained for elliptic problems with non-homogeneous boundary conditions. 
\end{remark}

\subsection{On Slivers}

A methodology for detecting slivers in arbitrary-dimensional Delaunay meshes has already been proposed by Li~\cite{li2003generating}. In order to fix ideas, consider the following example: suppose that we have constructed a 4-simplex with five vertices $p_0$, $p_1$, $p_2$, $p_3$, and $p_4$. The tetrahedral facet opposite $p_0$ is given by $(p_1,p_2,p_3,p_4)$. This facet is contained in an affine hyperplane. In order for the 4-simplex to be a sliver, the minimum distance between $p_0$ and the hyperplane associated with $(p_1,p_2,p_3,p_4)$ cannot be large~\cite{edelsbrunner2000smoothing}. This should hold true for all combinations of vertices with opposite facets in the simplex. Therefore, slivers can be identified via a min-max property: namely, we find elements for which the minimum distance between a vertex and its opposite hyperplane is not large enough, i.e., the minimum distance drops below a certain threshold. Furthermore, following some careful calculations, we can associate this minimum distance with a region, often referred to as the `forbidden region'~\cite{edelsbrunner2000smoothing}. 

A set of helpful criteria for identifying the minimum distance and computing the forbidden region are summarized in~\cite{li2003generating}. For our purposes, it suffices to observe that these quantities are easily computable: namely, the minimum distance and forbidden region are all functions of the dimensionality of the simplex, the shortest edge length of the simplex, the hypercircumsphere hypervolume, the hypercircumsphere radius, and similar quantities defined on the opposite facets. The entire formulation can be adjusted on an as-needed-basis through two user-specified constants. Broadly speaking, the identification of slivers in $\mathbb{R}^n$ is a solved problem. 

For space-time meshes, the process of identifying slivers is also straightforward. One may simply follow the approach of~\cite{boissonnat2015anisotropic}, and define all of the input parameters (the shortest edge length, hypercircumsphere hypervolume, etc.) in terms of the metric field of Eq.~\eqref{metric_space_time}. In other words, the edge lengths, radii, etc. are computed using the quadratic form induced by the metric. A detailed procedure for eliminating slivers from anisotropic Delaunay meshes is already discussed in~\cite{boissonnat2015anisotropic}. A rigorous exploration of this process is beyond the scope of the present article; but our  main point is that the process for identifying and removing slivers is well-developed for anisotropic Delaunay meshes, and immediately extends to the present context. 

\subsection{On Causality}

Causality is a fundamental principle of physics. For example, if a wave is traveling in a particular direction with a given speed, then we expect the current state of the wave to be primarily influenced by its previous time history. This is particularly common for hyperbolic systems, in which disturbances travel (at some finite speed) along characteristics. If the system is linear, then there is a cone of influence associated with the characteristics. 
This concept is not universally relevant for all unsteady systems, especially if the governing equations are reversible. However, if one determines that causality is important for a particular application, then there are several approaches which can be used to  enforce it, even on fully unstructured grids.

A simple solution is to divide the space-time domain into space-time slabs, and then construct fully unstructured meshes on each slab. The earliest time slabs can be solved before the later time slabs, and causality can be weakly enforced in this fashion. One may even adapt the unstructured mesh within a given time slab in order to improve the quality of the solution on that slab. This idea is merely a natural extension of the synchronous slab-wise approaches of~\cite{hulbert1990space,Shakib91,masud1997space,tezduyar2007modelling}.

Causality can also be enforced directly through the finite volume or finite element method one chooses. In fact, the idea of `upwinding' is already well-established in the relevant literature~\cite{hesthaven2007nodal,toro2013riemann}. This concept can be extended to accommodate both space and time, as the inviscid flux for a fluid dynamics problem can be combined with the temporal flux, and then treated as a single temporal-inviscid flux, (see~\cite{wang2015high,wang2015discontinuousthesis,caplan2019four} for details). For example, in the context of discontinuous finite element methods, the solution in adjacent elements is coupled via a numerical flux, and this numerical flux can be biased towards earlier times (i.e.~upwinding in time)~\cite{jamet1978galerkin,van2002space,wang2015high,wang2015discontinuousthesis,caplan2019four,sivas2021air}. In a similar fashion, in the context of continuous finite elements, the solution is often stabilized using a residual-based term. In principle, this term can be upwind-biased in time, in order to enforce causality.  

There is already strong evidence that physically correct solutions can be obtained on fully unstructured grids, in conjunction with carefully chosen finite element or finite volume methods. We refer the interested reader to a linear advection-diffusion example, (see chapter 4 of Caplan~\cite{caplan2019four}), and a moving shockwave example, (see Corrigan et al.~\cite{corrigan2019moving}) to see evidence of this fact for finite element methods in four dimensions. In addition, Nishikawa and Padway~\cite{nishikawa2020adaptive,padway2022adaptive,padway2022anadaptive} have demonstrated similar results for finite volume methods in three dimensions. In all cases, the solution was obtained all-at-once, on a single space-time domain with a fully unstructured, adapted mesh. 

\subsection{On Cost}

The costs of space-time methods can be divided into at least three categories: i) implementation cost, ii) computational cost, and iii) memory footprint/data-storage cost. Most approaches for solving unsteady problems will require significant implementation costs; it remains for us to limit the latter two costs. Fortunately, these latter costs can be ameliorated with domain decomposition techniques, mesh adaptation, and solver acceleration strategies. In the previous section, we discussed the possibility of subdividing a space-time domain into space-time slabs, (i.e. space-time domain decomposition), and thereafter, obtaining the solution on the slabs in a sequential fashion~\cite{Shakib91}. The degrees of freedom on each domain can be further reduced by employing solution-adaptive mesh refinement strategies~\cite{caplan2019four,caplan2020four}. 
During this process, the solution is obtained on an initial coarse mesh, and then the mesh is subsequently refined multiple times in accordance with a problem-specific error indicator. In addition, depending on the precise nature of the mesh family, one may be able to employ a space-time multigrid strategy in order to accelerate convergence of the solution~\cite{horton1995space,weinzierl2012geometric,gander2016analysis,anselmann2023geometric}. This strategy can be employed as a preconditioning strategy for Newton-Krylov methods, or as a direct solver for the full approximation scheme (FAS) approach.

In light of the discussion above, it is important to emphasize that most space-time simulations with fully unstructured meshes \emph{will} be more expensive than their time-stepping counterparts. However, this cost can be controlled using the (above) strategies. In addition, space-time methods on unstructured grids offer an incredible opportunity to produce unparalleled space-time resolution of important solution features (as discussed previously). 

\subsection{Summary}

For all of the reasons discussed above, we believe that Delaunay space-time meshes are an important building block in the development of space-time methods overall. We acknowledge that other meshing approaches exist, but we believe that this fact is \emph{positive}. The field as a whole will benefit from having multiple meshing options which are useful for different applications.

\section{Preliminaries for Hypervolume Meshing}
\label{sec;preliminaries}

In this section, we establish a pair of principles which enable us to characterize the connectivity and orientation of elements.

\subsection{Consistent Orientation} \label{consistent_orientation}

It is import to note that all initial elements, along with all subsequent elements added during the point insertion process, are required to have a consistent, positive orientation, (facet normals pointing inward).  Standard operations within the anisotropic Delaunay algorithm rely on this consistency and will not work without it. The geometric predicate for determining the orientation of a pentatope is discussed in section~\ref{sec;metric_predicates}. To maintain additional consistency, whenever a pentatope $P(p_1,p_2,p_3,p_4,p_5)$ is associated with its corresponding facets, the facets will be defined as follows
\begin{align*}
P(p_1,p_2,p_3,p_4,p_5) \quad \rightarrow \quad &F(p_1,p_2,p_3,p_4), \, F(p_1,p_2,p_5,p_3), \, F(p_1,p_2,p_4,p_5),\\
                                         &F(p_2,p_3,p_4,p_5), \, F(p_3,p_1,p_4,p_5).
\end{align*}

\subsection{Facet Normals} \label{hypersurface_normals}

We can now discuss our process for constructing facet normals. In this work, hypersurface normal vectors for tetrahedral facets are calculated as follows
\begin{align*}
N = 
\begin{vmatrix}
u_x & u_y & u_z & u_t \\
v_x & v_y & v_z & v_t \\
w_x & w_y & w_z & w_t \\
i & j & k & l
\end{vmatrix},
\end{align*}
for a tetrahedral facet $F(a,b,c,d)$, where $u = b-a$, $v = c-a$, and $w = d-a$ as shown in work from Kercher et al.~\cite{kercher2021moving}. The coordinate directions are represented as $(x,y,z,t) \rightarrow (i,j,k,l)$.

\section{Point Insertion}
\label{sec;point_insertion}

The core of any Delaunay-based meshing algorithm is point insertion. A standard point insertion algorithm, see e.g.~the Bowyer-Watson algorithm~\cite{bowyer1981computing, watson1981computing}, involves: a) constructing a bounding tesseract or simplex that contains all points, b) identifying the elements which contain the point to be inserted in their circumhyperspheres, c) forming a cavity by removing these elements from the mesh, and d) connecting the facets of the cavity to the point to be inserted. The last step involves creating a new set of pentatope elements. In this section, we provide a detailed discussion of each of these steps in the point insertion process. It is important to note that the process of forming a cavity of pentatopes containing the point to be inserted in their respective circumhyperspheres requires an infinite level of numerical precision to function as intended, which is obviously impossible.  To evade this common issue, one may modify the traditional Bowyer-Watson algorithm as will be discussed shortly.

\subsection{Bounding Tesseract} \label{bounding_tesseract}
One requires the manual specification of a single pentatope, or multiple pentatopes, that will contain all points which are to be inserted. If a single pentatope is used for this purpose, then this pentatope is called a \emph{super pentatope} or \emph{bounding pentatope}. Although it is possible to bound all points by a single pentatope, we chose an alternative scheme since point insertion in a single pentatope will inevitably create new pentatopes with relatively small dihedral angles. In addition, it is not straightforward to construct a single pentatope which is guaranteed to contain an arbitrary, finite set of points. With this in mind, we create multiple pentatopes for bounding purposes by constructing a single bounding (super) tesseract, and thereafter, subdividing the tesseract into $N_b$ pentatopes.  Our subdivision of the tesseract is required to satisfy the following conditions:
\begin{enumerate}[label=(\alph*)]
    \item The subdivision forms a Delaunay triangulation of the tesseract.
    \item The subdivision only uses points which reside on the boundary of the tesseract. 
\end{enumerate}
%
%
It turns out that several subdivisions satisfy the given requirements. In particular, the Coxeter-Freudenthal-Kuhn subdivision~\cite{coxeter1934discrete,freudenthal1942simplizialzerlegungen,kuhn1960some} which splits the tesseract into $N_b =24$ pentatopes is a suitable choice. This subdivision uses the original 16 vertices of the tesseract, and all the pentatopes of the subdivision belong to a single equivalence class. The reference coordinates used for the tesseract decomposition are shown in Table~\ref{tesseract_coord_table}, and the pentatope indices are summarized in Table~\ref{tesseract_tab24}.

Somewhat surprisingly, we have  discovered two new subdivisions with $N_b = 22$ and 23 pentatopes, respectively. These new subdivisions use the original 16 vertices of the bounding tesseract. However, each subdivision is less uniform than the Coxeter-Freudenthal-Kuhn subdivision, as the associated subpentatopes do not have identical hypervolumes.  Figures~\ref{bounding_tesseract_1}--\ref{bounding_tesseract_3} depict each of the pentatopes for the $N_b = 23$ case. The indices for the $N_b = 22$ and 23 cases are summarized in Tables~\ref{tesseract_tab22} and \ref{tesseract_tab23}.

It may be possible to identify subdivisions with $N_b < 22$. However, we were unable to identify such subdivisions via our numerical experiments. Nevertheless, the discovery of subdivisions with $N_b < 24 = d!$ is already an interesting achievement. These subdivisions are directly analogous to the $N_b = 5$ subdivision of the 3-cube into tetrahedra. We note that the existence of subdivisions with $N_b < d!$ is actually contrary to some claims in the literature (see~\cite{shores2010bounds}), where it is mistakenly argued that the lower bound for $N_b$ is $d!$. 


\begin{table}[h!]
\centering
\begin{tabular}{|c|c|c|c|c|}
\hline
& $x$ $y$ $z$ $t$ & & $x$ $y$ $z$ $t$ \\
\hline
$p_1$ & (0, 0, 0, 0) & $p_2$ & (1, 0, 0, 0) \\
$p_3$ & (1, 1, 0, 0) & $p_4$ & (0, 1, 0, 0) \\
$p_5$ & (0, 0, 1, 0) & $p_6$ & (1, 0, 1, 0) \\
$p_7$ & (1, 1, 1, 0) & $p_8$ & (0, 1, 1, 0) \\
$p_9$ & (0, 0, 0, 1) & $p_{10}$ & (1, 0, 0, 1) \\
$p_{11}$ & (1, 1, 0, 1) & $p_{12}$ & (0, 1, 0, 1) \\
$p_{13}$ & (0, 0, 1, 1) & $p_{14}$ & (1, 0, 1, 1) \\
$p_{15}$ & (1, 1, 1, 1) & $p_{16}$ & (0, 1, 1, 1) \\
\hline
\end{tabular}
\caption{Ordering of coordinates for tesseract decompositions.}
\label{tesseract_coord_table}
\end{table}

\begin{table}[h!]
\centering
\begin{tabular}{|l|l|l|l|}
\hline
Pentatopes 1-6 & Pentatopes 7-12 & Pentatopes 13-18 & Pentatopes 19-24 \\
\hline
1       2       3    5    9 &
2       3       4    5    9 &
2       4       5    6    9 &
3       4       5    7    9 \\
4       5       6    7    9 &
4       6       7    8    9 &
2       4       6    9    10 &
4       6       8    9    10 \\
3       4       7    9    11 &
4       7       8    9    11 &
4       8       9    10    11 &
4       8       10    11    12 \\
5       6       7    9    13 &
6       7       8    9    13 &
6       8       9    10    13 &
7       8       9    11    13 \\
8       9       10    11    13 &
8       10      11    12    13 &
6       8       10    13    14 &
8       10      12    13    14 \\
7       8       11    13    15 &
8       11      12    13    15 &
8       12      13    14    15 &
8       12      14    15    16 \\
\hline
\end{tabular}
\caption{Indices for the subdivision of the tesseract into $N_b = 24$ pentatopes.}
\label{tesseract_tab24}
\end{table}
\begin{table}[h!]
\centering
\begin{tabular}{|l|l|l|l|}
\hline
Pentatopes 1-6 & Pentatopes 7-12 & Pentatopes 13-18 & Pentatopes 19-22 \\
\hline
1  2  4  5 13 &
1  2  4  9 13 &
2 11 13 14 15 &
2  7 11 13 15 \\
2  4  5  7 13 &
2  4  9 11 13 &
4  7 11 13 15 &
4  7 13 15 16 \\
2  3  7 11 13 &
4 11 13 15 16 &
2  7 13 14 15 &
2  3  4  7 13 \\
2  3  4 11 13 &
3  4  7 11 13 &
4  5  7  8 13 &
4  7  8 13 16 \\
4  9 11 12 13 &
4 11 12 13 16 &
2  5  6  7 13 &
-- \\
2  6  7 13 14 &
2  9 10 11 13 &
2 10 11 13 14 & 
-- \\
\hline
\end{tabular}
\caption{Indices for the subdivision of the tesseract into $N_b = 22$ pentatopes.}
\label{tesseract_tab22}
\end{table}
\begin{table}[h!]
\centering
\begin{tabular}{|l|l|l|l|}
\hline
Pentatopes 1-6 & Pentatopes 7-12 & Pentatopes 13-18 & Pentatopes 19-23 \\
\hline
1 3 4 6 9 &
1 6 8 9 13 & 
3 7 8 13 15 &
1 4 6 8 9 \\
3 6 7 8 13 &
3 6 7 13 15 &
3 4 6 8 9 &
3 6 8 9 13 \\
3 10 11 13 15 &
1 2 3 6 10 &
3 8 9 11 13 &
3 6 10 13 15 \\
1 3 6 9 10 &
3 6 9 10 13 &
8 11 12 13 15 &
3 4 8 9 11 \\
3 9 10 11 13 &
6 10 13 14 15 &
4 8 9 11 12 &
8 9 11 12 13 \\
8 12 13 15 16 &
1 5 6 8 13 &
3 8 11 13 15 & 
-- \\
\hline
\end{tabular}
\caption{Indices for the subdivision of the tesseract into $N_b = 23$ pentatopes.}
\label{tesseract_tab23}
\end{table}

\begin{figure}[h!]
\centering
\includegraphics[width=6cm,trim={1cm 0 1cm 0},clip]{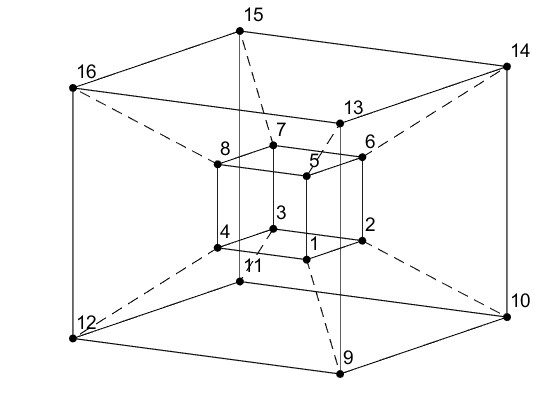} 
\includegraphics[width=6cm,trim={1cm 0 1cm 0},clip]{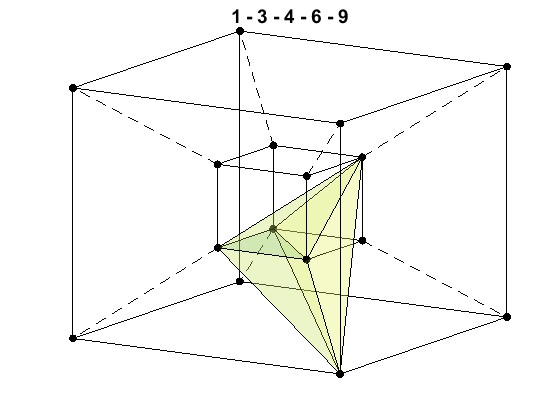}
\includegraphics[width=6cm,trim={1cm 0 1cm 0},clip]{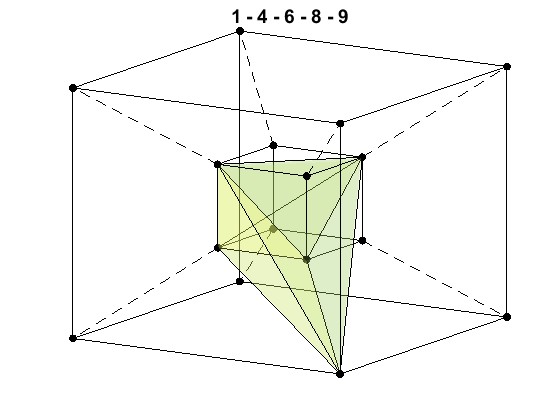} 
\includegraphics[width=6cm,trim={1cm 0 1cm 0},clip]{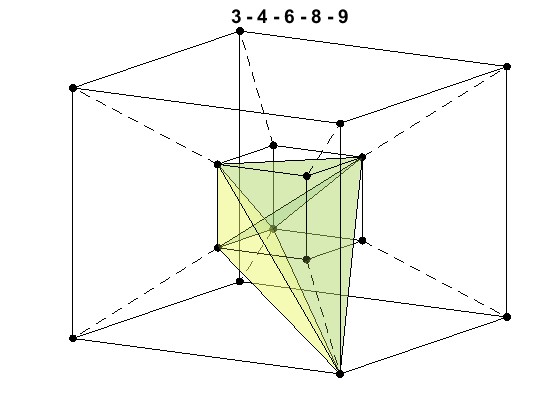}
\includegraphics[width=6cm,trim={1cm 0 1cm 0},clip]{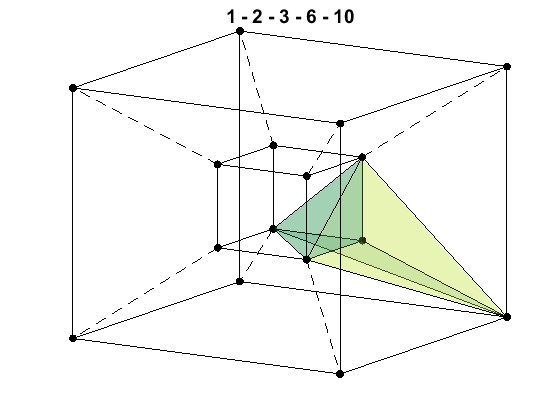} 
\includegraphics[width=6cm,trim={1cm 0 1cm 0},clip]{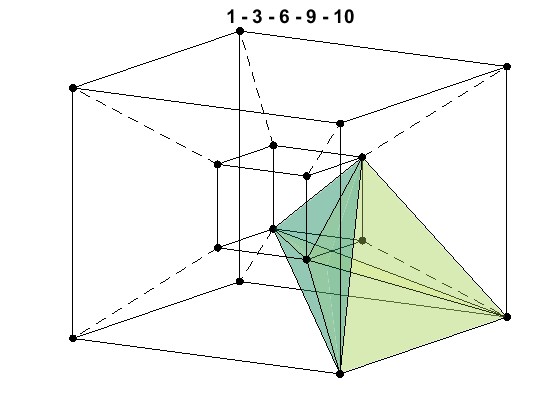}
\includegraphics[width=6cm,trim={1cm 0 1cm 0},clip]{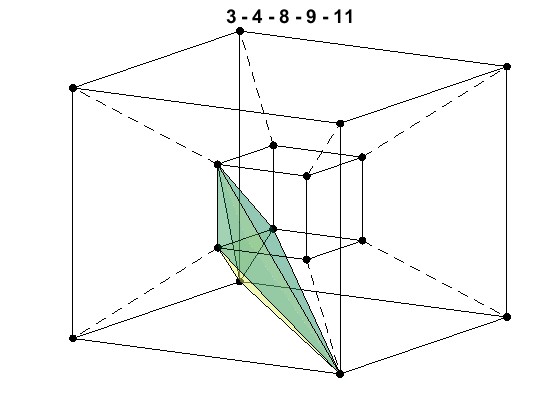}
\includegraphics[width=6cm,trim={1cm 0 1cm 0},clip]{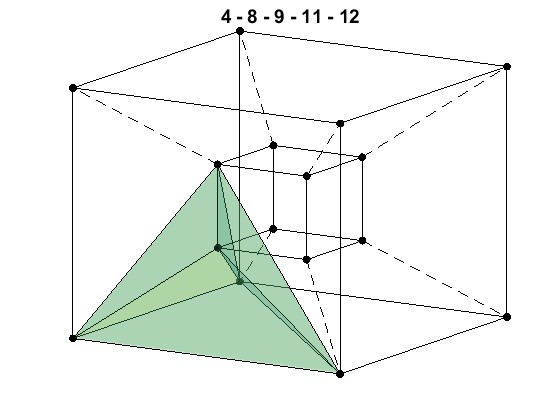} 
\caption{Illustration of the bounding tesseract sectioned into pentatopes (1-7), for the subdivision strategy with $N_b = 23$.}
\label{bounding_tesseract_1}
\end{figure}
\begin{figure}[h!]
\centering
\includegraphics[width=6cm,trim={1cm 0 1cm 0},clip]{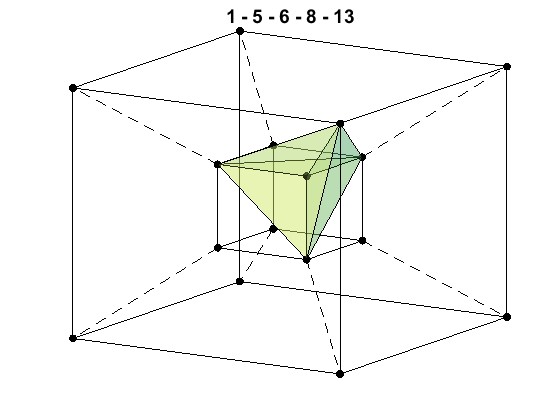}
\includegraphics[width=6cm,trim={1cm 0 1cm 0},clip]{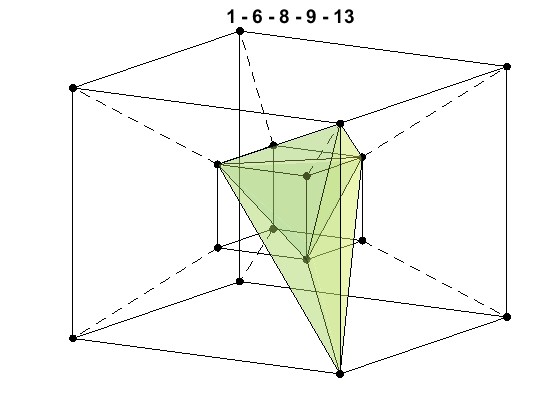} 
\includegraphics[width=6cm,trim={1cm 0 1cm 0},clip]{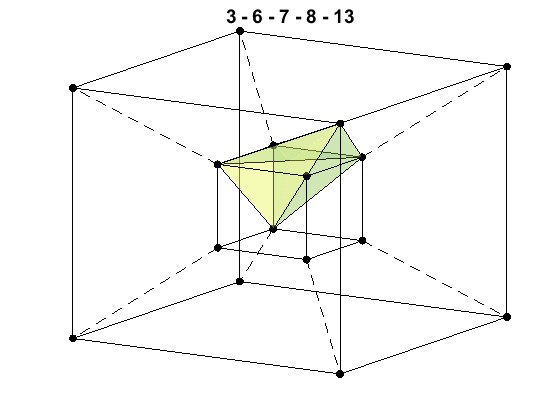}
\includegraphics[width=6cm,trim={1cm 0 1cm 0},clip]{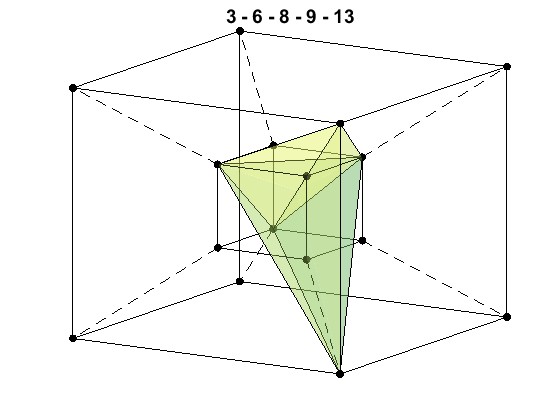}
\includegraphics[width=6cm,trim={1cm 0 1cm 0},clip]{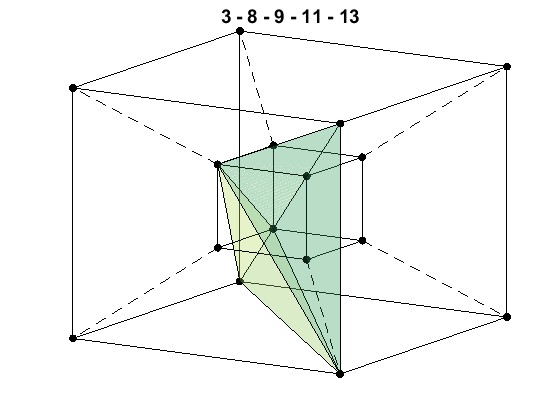}
\includegraphics[width=6cm,trim={1cm 0 1cm 0},clip]{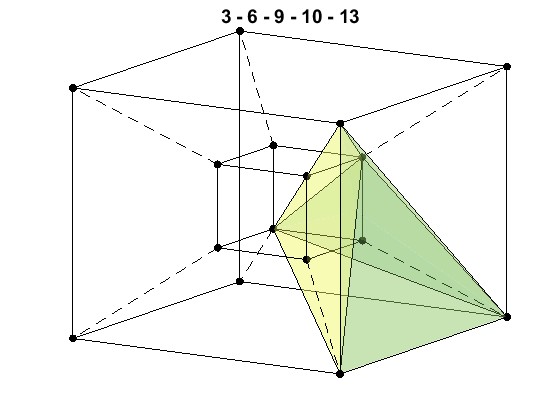} 
\includegraphics[width=6cm,trim={1cm 0 1cm 0},clip]{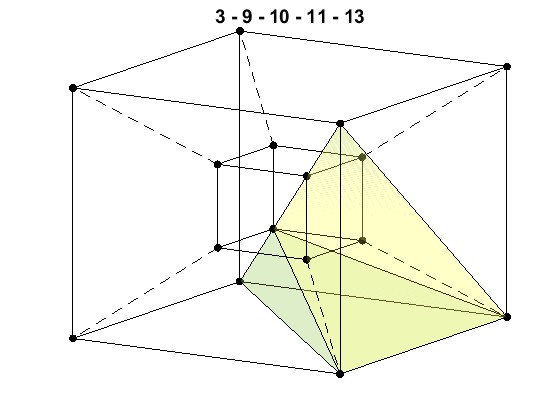}
\includegraphics[width=6cm,trim={1cm 0 1cm 0},clip]{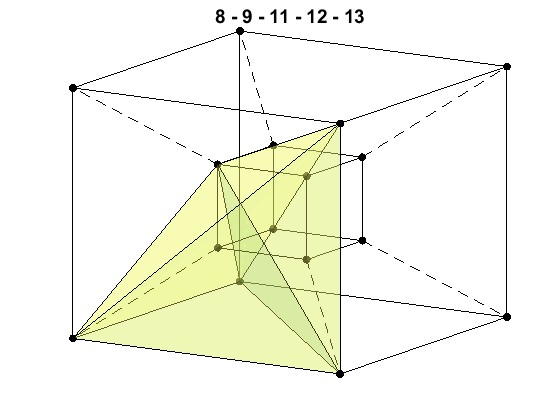} 
\caption{Illustration of the bounding tesseract sectioned into pentatopes (8-15), for the subdivision strategy with $N_b = 23$.}
\label{bounding_tesseract_2}
\end{figure}
\begin{figure}[h!]
\centering
\includegraphics[width=6cm,trim={1cm 0 1cm 0},clip]{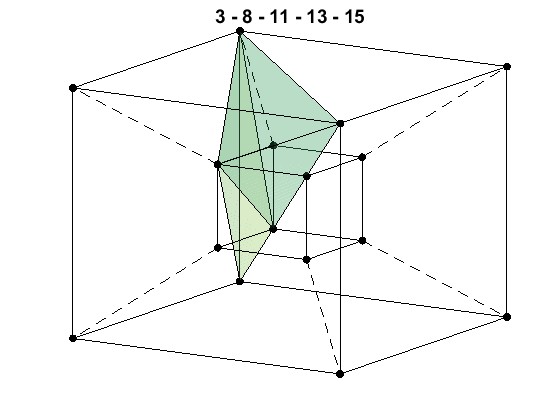}
\includegraphics[width=6cm,trim={1cm 0 1cm 0},clip]{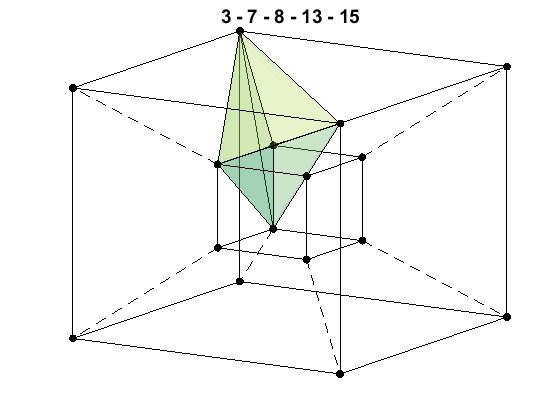}
\includegraphics[width=6cm,trim={1cm 0 1cm 0},clip]{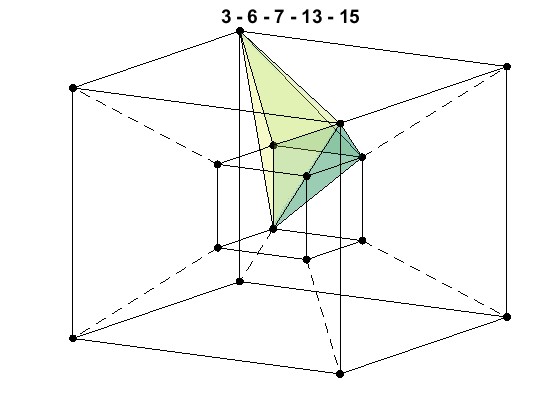}
\includegraphics[width=6cm,trim={1cm 0 1cm 0},clip]{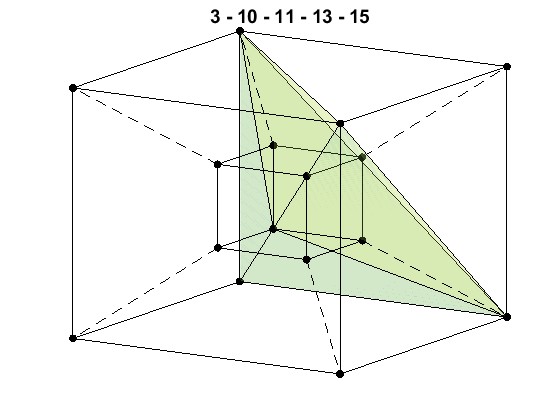} 
\includegraphics[width=6cm,trim={1cm 0 1cm 0},clip]{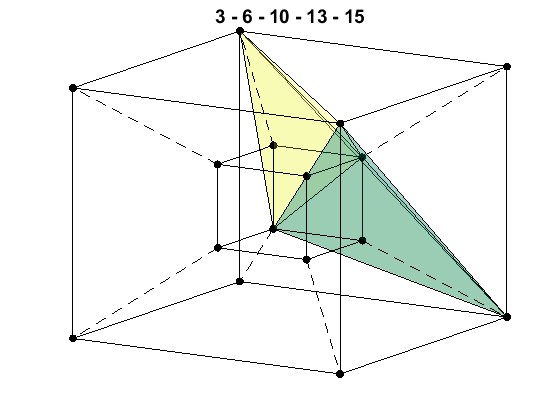}
\includegraphics[width=6cm,trim={1cm 0 1cm 0},clip]{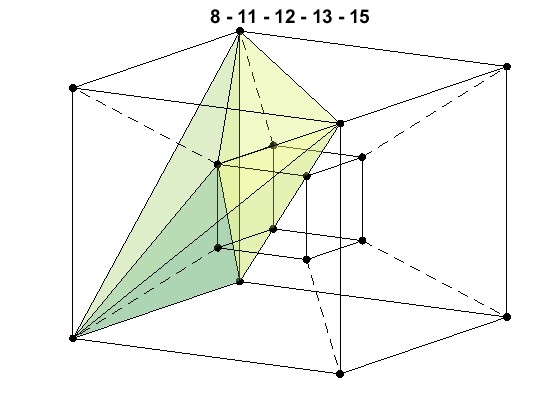} 
\includegraphics[width=6cm,trim={1cm 0 1cm 0},clip]{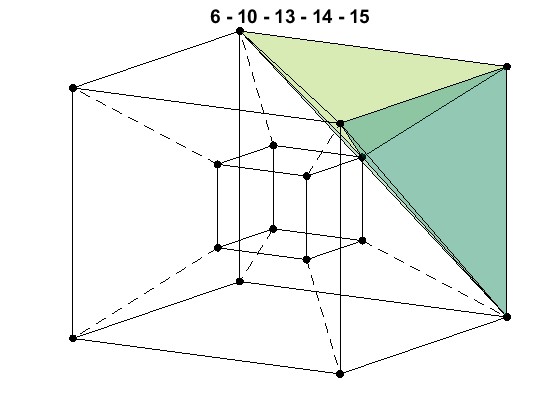}
\includegraphics[width=6cm,trim={1cm 0 1cm 0},clip]{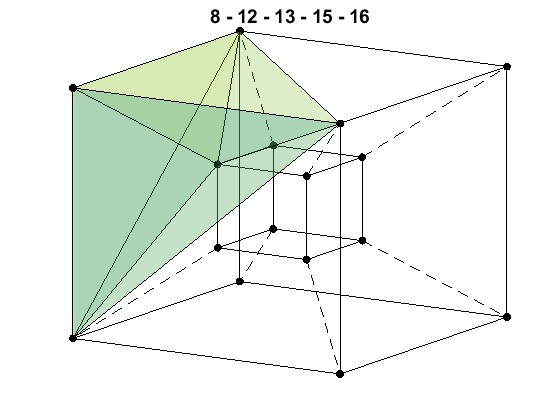}
\caption{Illustration of the bounding tesseract sectioned into pentatopes (16-23), for the subdivision strategy with $N_b = 23$.}
\label{bounding_tesseract_3}
\end{figure}

\pagebreak
\clearpage

\subsection{Base Element Search} \label{base_element}

We begin by finding a pentatope element, referred to as the \emph{base element}, that contains the point $p$ to be inserted. There are two prominent methods for identifying the base element: i) Lo's method~\cite{lo2014finite} which relies on calculating a series of volumes involving $p$, and ii) Si's method~\cite{hang2015tetgen} which relies on calculating a series of orientations of $p$ relative to the tetrahedron's faces. Although both methods are effective, we proceeded with Si's orientation-based method because it may be supplemented with the standard (non-metric-weighted) 4D orientation predicate for further accuracy. Scaled up to 4D, the method starts by identifying the tetrahedral facets associated with a given pentatope, then forming five temporary pentatopes from each facet combined with $p$. If each of the five temporary pentatopes \emph{n} has an orientation $\theta$ greater than or equal to zero, or each has an orientation $\theta$ less than or equal to zero, $p$ is located inside of the pentatope, making it the base element. Pseudocode for this condition is shown below.
\vspace{0.1in}
\begin{algorithmic}
    \If{$\theta_n \geq$ 0 $\forall n$}\\
        \quad inside = true
    \ElsIf{$\theta_n \leq$ 0 $\forall n$}\\
        \quad inside = true
    \Else\\
        \quad inside = false
    \EndIf
\end{algorithmic}
\vspace{0.1in}
If $p$ does not lie within the pentatope, the orientations for each of the temporary pentatopes help indicate the direction of the point relative to the current pentatope. The facet associated with the temporary pentatope with the smallest magnitude for its orientation out of the five is the facet closest to $p$. The neighboring pentatope sharing this tetrahedral facet is then selected as the next candidate for the base element, and the process is repeated until the base element is found, forming a path from the first pentatope tested. A simple 2D illustration of a search path to the base element is shown in Figure~\ref{cavity_operator}.
\begin{figure}[h!]
\centering
\includegraphics[width=6cm]{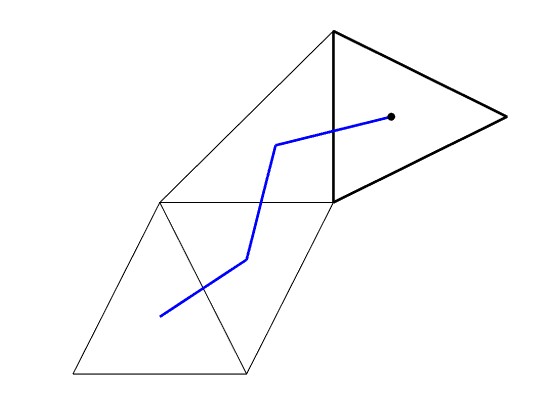}
\includegraphics[width=3cm]{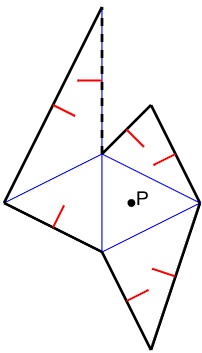}
\includegraphics[width=4cm]{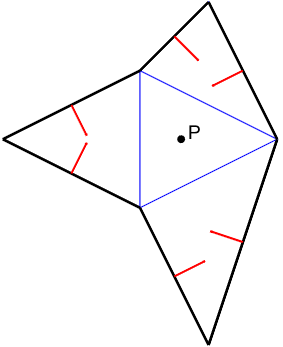}
\includegraphics[width=4cm]{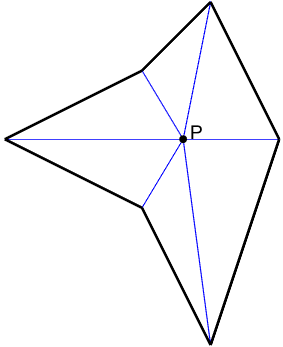}
\caption{2D Illustration of point insertion cavity operator. The top left illustrates finding the base element. The top right shows the initial cavity and the normals of its boundary edges. The dashed line indicates a boundary edge that is invisible to point $p$.  The bottom left shows the reshaped cavity with all of its boundary edges having visibility to point $p$. Finally, the bottom right shows the reconnection of the cavity boundary to point $p$.}
\label{cavity_operator}
\end{figure}



One of the most common problems encountered during a search for the base element is a search path forming a continuous closed loop within a specific set of elements. This occurs naturally due to limitations of the methodology. Fortunately, this problem can be easily remedied by keeping track of elements that have already been searched and not allowing the algorithm to return to them. Doing this will force a non-repeating path to be formed; although, the path may not be as direct as one would draw manually. If for some reason the number of iterations in searching for the base element exceeds the number of elements currently in the mesh without finding the base element, it may be useful to switch to an alternate strategy by performing a global search through all of the elements.  Resorting to a global search in cases where an unresolved search path is formed should be a rare occurrence, and its frequency may indicate issues with the implementation of the element search, for example, issues with the implementation of the geometric predicates or the level of precision used.

Another common issue worth mentioning is the appearance of \emph{ghost} points, meaning points to be inserted that are found to not be contained in any existing pentatope. For an arrangement of coplanar (hypercoplanar) points, points to be inserted will often lie on a tetrahedral facet shared by two pentatopes. Due to limitations in machine precision, points lying directly on tetrahedral facets may be labeled as not belonging to either pentatope sharing that tetrahedral facet.  This issue also extends to points located on other common features such as shared triangular faces and edges. One can avoid this issue by carefully setting tolerances associated with the resulting values of the orientation predicates for the base element test instead of comparing directly to zero.

\subsection{Cavity Operator} \label{cavity_formation}

Once the base pentatope has been identified, a cavity is formed by identifying the neighboring pentatopes with circumhyperspheres that contain point $p$, in accordance with the Delaunay criterion. We note that these circumhyperspheres are defined in metric space, where the metric $M = M_p = M(x_p, y_p, z_p, t_p)$ is associated with the point $p$ that is being inserted. Henceforth, we will refer to such circumhyperspheres as $M$-circumhyperspheres. The facet neighbors of the base pentatope make up an initial \emph{neighbor front}. Each pentatope in the neighbor front is tested to see if its $M$-circumhypersphere contains point $p$. The check for the presence of a point within a $M$-circumhypersphere is called an $inhypersphere_M$ check. If the point is within a pentatope's $M$-circumhypersphere, it is added to the cavity and its neighbors are added to the neighbor front if they do not yet belong to the cavity, and are not already a part of the neighbor front. Once the neighbor front is empty, meaning none of the pentatopes at the outside boundary of the cavity contain point $p$ in their respective $M$-circumhyperspheres, the process ends, and the cavity is considered complete.

Next, it is necessary to evaluate the validity of the cavity. For our purposes, a cavity is considered valid if point $p$ is visible to all the cavity's bounding tetrahedral facets. Facet visibility is determined by calculating the metric-induced inner product (denoted by $Q$) between two vectors -- the normal, inward facing vector of the facet ($N$) and the vector formed between the facet's centroid $c$ and point $p$ ($CP$),
\begin{align*}
    Q = (N)^{T} M (CP).
\end{align*}
In order to control visibility, one must set a lower limit on the value of $Q$. The limit set on the value $Q$ is heavily dependent on the user and their desired application, as it essentially represents the minimum quality for elements that the user is willing to accept. $Q = 0$ means that when a facet is reconnected to $p$, the pentatope formed will have no hypervolume, and positive values close to zero will form slivers. Although the user may decide, we will note that four dimensions introduces greater complexity, and the point insertion algorithm begins to lose stability the further the tolerance is set above zero. For our purposes in creating an initial hypervolume mesh from a given hypersurface mesh, the limit of $Q$ is set to double precision $(10^{-16})$, and all slivers with hypervolumes within double precision are accepted. The accuracy of $Q$ is very important, which is why it is recommended that this value is calculated using (at least) quadruple precision.

If $Q$ is negative or less than a tolerance specified by the user, the facet is considered not visible to point $p$. The pentatope associated with this facet is then removed from the cavity. This process continues until all the cavity's bounding facets have visibility to point $p$.  These steps are illustrated in Figure~\ref{cavity_operator}. There is no scenario where all the pentatopes associated with the cavity are removed. If this occurs, it is an indication that the proposed algorithm has not been properly implemented, and one should check for consistent element orientation and proper removal of pentatopes associated with \emph{invisible} facets.

\subsection{Reconnection and Review} \label{reconnection}

After an initial cavity has been identified and all its bounding tetrahedral facets have been deemed visible relative to point $p$, the pentatopes belonging to the cavity are removed from the mesh and replaced with pentatopes formed by connecting the bounding tetrahedral facets of the cavity to the inserted point. The final reconnection step is illustrated in Figure~\ref{cavity_operator}. To ensure consistent orientation, each new pentatope's local indexing is carefully specified to have a positive orientation. The entire point insertion process continues until all points from the hypersurface mesh have been inserted into the bounding tesseract.  

A complete summary of the point insertion process appears below:
\begin{enumerate}
    \item Locate the base pentatope.
    \item Evaluate the neighbor front until an initial cavity is defined.
    \item Remove pentatopes from the cavity until all the bounding tetrahedral facets are visible to the point to be inserted.
    \item Remove pentatopes associated with the cavity. Then connect the cavity's tetrahedral facets to the newly inserted point to form new pentatopes.
    \item Ensure positive orientations of all new pentatopes.
\end{enumerate}

A common error one may encounter when first attempting to implement this process is finding that points are still missing from the tessellation after the insertion algorithm has supposedly added all of the points. This stems from a lack of accuracy when identifying pentatopes belonging to each cavity -- i.e. pentatopes that contain the point $p$ within their respective $M$-circumhyperspheres (see section~\ref{cavity_formation}). If one is already using a 4D geometric predicate for this process, it may be insufficiently accurate, and changes may need to be implemented in accordance with section~\ref{sec;metric_predicates}. 

\section{Geometric Predicates}
\label{sec;metric_predicates}

The two tests that appear throughout the point insertion algorithm, 4D $orientation$ (or $orientation_{M}$) and $inhypersphere$ (or $inhypersphere_M$), hinge on the precision of predicates. Shewchuk pioneered geometric predicates in 2D and 3D~\cite{shewchuk1996adaptive} for isotropic Delaunay meshes in Euclidean space. In this work, we construct predicates in 4D for anisotropic Delaunay meshes. In particular, we explicitly incorporate the influence of a generic metric field $M = M(x,y,z,t)$ into the well-known orientation and inhypersphere predicates. To the authors' knowledge, our proposed approach is distinct from those of previous researchers. In order to illustrate this fact, let us review the standard approach for computing geometric predicates for anisotropic Delaunay meshes.

\subsection{The Standard Approach} \label{stand_app}

The standard approach consists of using the square root or the Cholesky decomposition of the metric $M$ to scale each point, (see~\cite{boissonnat2015anisotropic}, section 2.1 for details); and thereafter using these scaled points as inputs to ordinary Delaunay predicates. If we assume that $M$ is symmetric and positive definite, it is always possible to form a unique, symmetric, positive definite, square root  $M^{1/2} = (M^{1/2})^{T}$ where $M = M^{1/2} M^{1/2}$. In addition, it is also possible to form the Cholesky decomposition $M = C_{M}^{T} C_{M}$. In both cases, we ensure that the following important identities are satisfied: 
\begin{align*}
M = G_{M}^{T} G_{M} \quad \mathrm{and} \quad \mathrm{det}(G_{M})>0,
\end{align*}
where $G_{M}$ is a generic matrix factor in our decomposition.
This means we can choose $G_{M} = M^{1/2}$ or $G_{M} = C_{M}$. We can then scale our input points by the matrix $G_{M}$. In particular, suppose that we want to compute the orientation of five points: $a = (a_x, a_y, a_z, a_t)^{T}$, $b = (b_x, b_y, b_z, b_t)^{T}$, $c = (c_x, c_y, c_z, c_t)^{T}$, $d = (d_x, d_y, d_z, d_t)^{T}$, and $e = (e_x, e_y, e_z, e_t)^{T}$. The orientation predicate is given by
\begin{align}
    Orientation 
    = \begin{vmatrix}
        (a-e)^T \\
        (b-e)^T \\
        (c-e)^T \\
        (d-e)^T
    \end{vmatrix}. \label{baseline_orient}
\end{align}
%
Now, in order to incorporate the influence of the metric $M$, we simply perform the following substitution in the predicate above
\begin{align}
    a \Rightarrow G_{M} a, \quad b \Rightarrow G_{M} b, \quad c \Rightarrow G_{M} c, \quad d \Rightarrow G_{M} d, \quad e \Rightarrow G_{M} e. \label{transform_one}
\end{align}
The same substitution can be performed in the following in-hypersphere predicate
\begin{align}
    \nonumber InHypersphere &=
    \begin{vmatrix}
        (a-f)^{T} & (a-f)^{T} (a-f) \\
        (b-f)^{T} & (b-f)^{T} (b-f) \\
        (c-f)^{T} & (c-f)^{T} (c-f) \\
        (d-f)^{T} & (d-f)^{T} (d-f) \\
        (e-f)^{T} & (e-f)^{T} (e-f)  
    \end{vmatrix} 
    \\[1.0ex]
    \nonumber &=
    (a-f)^{T}(a-f)\begin{vmatrix}
        (b-f)^{T} \\
        (c-f)^{T} \\
        (d-f)^{T} \\
        (e-f)^{T}
    \end{vmatrix} 
    -(b-f)^{T}(b-f)\begin{vmatrix}
        (a-f)^{T} \\
        (c-f)^{T} \\
        (d-f)^{T} \\
        (e-f)^{T}
    \end{vmatrix} \\[1.0ex]
    \nonumber &+ (c-f)^{T}(c-f)\begin{vmatrix}
        (a-f)^{T} \\
        (b-f)^{T} \\
        (d-f)^{T} \\
        (e-f)^{T}
    \end{vmatrix} 
    -(d-f)^{T}(d-f)\begin{vmatrix}
        (a-f)^{T} \\
        (b-f)^{T} \\
        (c-f)^{T} \\
        (e-f)^{T}
    \end{vmatrix}
    \\[1.0ex]
    &+ (e-f)^{T}(e-f)\begin{vmatrix}
        (a-f)^{T} \\
        (b-f)^{T} \\
        (c-f)^{T} \\
        (d-f)^{T}
    \end{vmatrix}, \label{baseline_incircle}
\end{align}
where $a,b,c,d,e$ are defined above, $f=(f_x,f_y,f_z,f_t)^{T}$, and
\begin{align}
    f \Rightarrow G_{M} f. \label{transform_two}
\end{align}
A problem with this approach, is that the substitutions in Eqs.~\eqref{transform_one} and \eqref{transform_two} can introduce additional error into our calculations of the predicates. Evidently, there are two additional sources of error which appear, relative to a conventional predicate calculation:
\begin{itemize}
    \item \emph{Decomposition error}. Error that arises when we compute the decomposition of the metric tensor, $M = G_{M}^{T} G_{M}$.
    \item \emph{Multiplication error}. Error that arises when we multiply each point by the decomposition matrix factor, $G_{M}$.
\end{itemize}
It would appear that the first item is the most important, as it implies that we must compute the decomposition $M =G_{M}^{T} G_{M}$ with a high-degree of precision. 
In particular, we note that the conventional predicates---without the metric---depend on high-precision matrix determinants, which are well-understood, (see \cite{shewchuk1996adaptive}). However, the results of these high-precision predicates may be polluted if their inputs, i.e. the metric scaled points, are not computed with sufficient precision. Therefore, in cases involving the metric $M$, we believe that one should construct a high-precision matrix decomposition routine, or identify the necessary routine in a linear algebra library, in order to ensure sufficient precision of the predicate inputs. 
%
%

Alternatively, in what follows, we propose a more natural approach which avoids the matrix decomposition of $M$ entirely.

\subsection{An Alternative Approach} \label{alt_app}

Upon leveraging basic principles from linear algebra, one can show that the metric-weighted orientation and inhypersphere predicates can be computed as follows for the case of $d = 4$
\begin{align}
    Orientation_{M} 
    = \sqrt{\mathrm{det}(M)}\begin{vmatrix}
        (a-e)^T \\
        (b-e)^T \\
        (c-e)^T \\
        (d-e)^T
    \end{vmatrix}, \label{new_orient}
\end{align}

\begin{align}
    \nonumber InHypersphere_{M} = \sqrt{\mathrm{det}(M)}&\Bigg(
    (a-f)^{T}M(a-f)\begin{vmatrix}
        (b-f)^{T} \\
        (c-f)^{T} \\
        (d-f)^{T} \\
        (e-f)^{T}
    \end{vmatrix} 
    -(b-f)^{T}M(b-f)\begin{vmatrix}
        (a-f)^{T} \\
        (c-f)^{T} \\
        (d-f)^{T} \\
        (e-f)^{T}
    \end{vmatrix} \\[1.0ex]
    \nonumber &+ (c-f)^{T}M(c-f)\begin{vmatrix}
        (a-f)^{T} \\
        (b-f)^{T} \\
        (d-f)^{T} \\
        (e-f)^{T}
    \end{vmatrix} 
    -(d-f)^{T}M(d-f)\begin{vmatrix}
        (a-f)^{T} \\
        (b-f)^{T} \\
        (c-f)^{T} \\
        (e-f)^{T}
    \end{vmatrix}
    \\[1.0ex]
    &+ (e-f)^{T}M(e-f)\begin{vmatrix}
        (a-f)^{T} \\
        (b-f)^{T} \\
        (c-f)^{T} \\
        (d-f)^{T}
    \end{vmatrix} \Bigg). \label{new_incircle}
\end{align}
This formulation of the predicates is mathematically valid, as it yields the same results as the predicates in Eqs.~\eqref{baseline_orient} and \eqref{baseline_incircle} in conjunction with the substitutions from Eqs.~\eqref{transform_one} and \eqref{transform_two}. Furthermore, as mentioned previously, this formulation also avoids any error associated with computing the matrix factor $G_{M}$. In addition, this formulation only requires well-known predicates for computing $4 \times 4$ determinants. For example, the $inhypersphere_{M}$ predicate involves the computation of six $4 \times 4$ determinants. These computations can be performed with a high-degree of precision, on an as-needed basis. 

\subsection{Generalization to $d$-dimensions}

The predicate formulation in Eqs.~\eqref{new_orient} and \eqref{new_incircle} can be generalized to $d$-dimensions, where $d \geq 4$, as follows
\begin{align}
    Orientation_{M} 
    = \sqrt{\mathrm{det}(M)}\begin{vmatrix}
        (p_1-p_{d+1})^T \\
        \vdots \\
        (p_d-p_{d+1})^T
    \end{vmatrix}, \label{new_orient_dimension}
\end{align}
\begin{align}
    \nonumber InHypersphere_{M} &= (-1)^{d} \sqrt{\mathrm{det}(M)} \Bigg( (p_{1}-p_{d+2})^{T}M(p_{1}-p_{d+2})\begin{vmatrix}
        (p_{2} - p_{d+2})^{T} \\
        \vdots \\
        (p_{d+1} - p_{d+2})^T
    \end{vmatrix} 
    \\[1.0ex]
   \nonumber &+\sum_{i=2}^{d} (-1)^{i-1}
    (p_{i}-p_{d+2})^{T}M(p_{i}-p_{d+2})\begin{vmatrix}
        (p_1 - p_{d+2})^{T} \\
        \vdots \\
        (p_{i-1} - p_{d+2})^{T} \\
        (p_{i+1} - p_{d+2})^{T} \\
        \vdots \\
        (p_{d+1} - p_{d+2})^T
    \end{vmatrix} \\[1.0ex]
    &+(-1)^{d} (p_{d+1}-p_{d+2})^{T}M(p_{d+1}-p_{d+2})\begin{vmatrix}
        (p_{1} - p_{d+2})^{T} \\
        \vdots \\
        (p_{d} - p_{d+2})^T
    \end{vmatrix} 
    \Bigg),
\end{align}
where $M \in \mathbb{R}^{d\times d}$.

\subsection{Comparison of Standard and Alternative Approaches}

In this section, our objective is to demonstrate the differences between the standard approach (section~\ref{stand_app}), and the alternative approach (section~\ref{alt_app}), for computing the $inhypersphere_{M}$ predicate. Towards this end, we performed numerical tests on randomly-generated, symmetric positive-definite matrices $M$, where
\begin{align*}
    M = S^{T} S,
\end{align*}
and $S \in \mathbb{R}^{d\times d}$ is a randomly-generated matrix with entries on the interval $[0,10]$. In addition, we computed point locations $p_{1}, \ldots, p_{d+2}$, with randomly-generated coordinates $(x_1,\ldots,x_d)$ on the interval $[0,1]$. The $inhypersphere_{M}$ predicate was computed using double precision in Matlab, with the randomly generated inputs of $M$ and $p_{1}, \ldots, p_{d+2}$. The differences between the two approaches were quantified by computing the following normalized difference between the outputs
\begin{align*}
    \text{Normalized difference} = \frac{\left|(InHypersphere_{M})_{\text{standard}} - (InHypersphere_{M})_{\text{alternate}} \right|}{\left|(InHypersphere_{M})_{\text{standard}} \right|}.
\end{align*}
The normalized difference between the outputs was averaged over the course of $100$ randomly-generated metrics and point sets. These metrics and point sets were used twice: in particular, we performed tests on the standard approach in conjunction with the Cholesky decomposition, and thereafter, in conjunction with the square-root decomposition. During each test, we computed the decomposition error, as follows
\begin{align*}
    \text{Decomposition error} = \left\|M - G_{M}^{T} G_{M} \right\|_{F},
\end{align*}
where $\left\|\cdot \right\|_{F}$ is the Frobenius norm. This error was averaged over the course of the $100$ randomly generated metrics (as discussed above). 

Figure~\ref{cholesky_difference_fig} shows the average normalized difference between the standard and alternative approaches, for cases in which the Cholesky decomposition was used for the standard approach. In addition, Figure~\ref{cholesky_error_fig} shows the average error in the Cholesky decomposition. Figures~\ref{sqrt_difference_fig} and~\ref{sqrt_error_fig} show similar trends, for the standard approach in conjunction with the square-root decomposition. In each case, results for dimensions $d = 2, 3, 4, 5, 10, 20, 30, 40, 50, 60, 70, 80, 90$, and $100$ are shown. It is clear that the errors in the Cholesky decomposition and square-root decomposition grow with the number of dimensions $d$. In addition, the differences between the standard and alternative approaches have a tendency to grow, as the errors in the decompositions grow. This provides evidence for the idea that errors in the decompositions, especially as the number of dimensions increases, effect the reliability of the associated predicates. At the very least, these errors are correlated with differences in observed outputs.
\begin{figure}[h!]
\centering
\includegraphics[width=9cm]{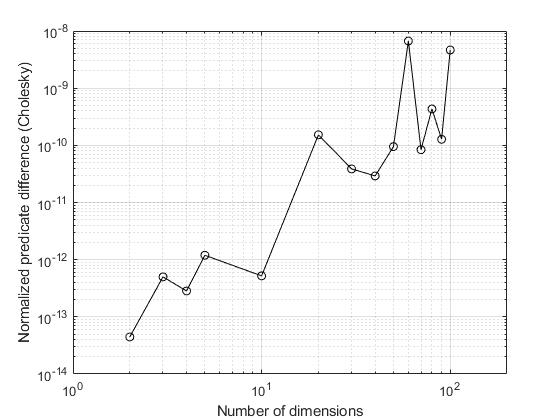}
\caption{Normalized difference between the output of the $inhypersphere_{M}$ predicate computed using the standard approach in conjunction with the Cholesky decomposition, and the alternative approach.}
\label{cholesky_difference_fig}
\end{figure}

\begin{figure}[h!]
\centering
\includegraphics[width=9cm]{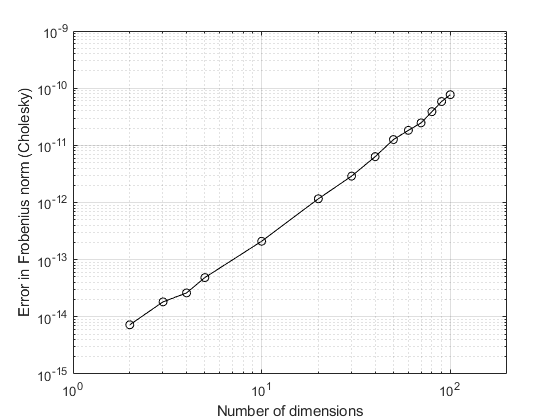}
\caption{Frobenius norm of the error in the Cholesky decomposition.}
\label{cholesky_error_fig}
\end{figure}

\begin{figure}[h!]
\centering
\includegraphics[width=9cm]{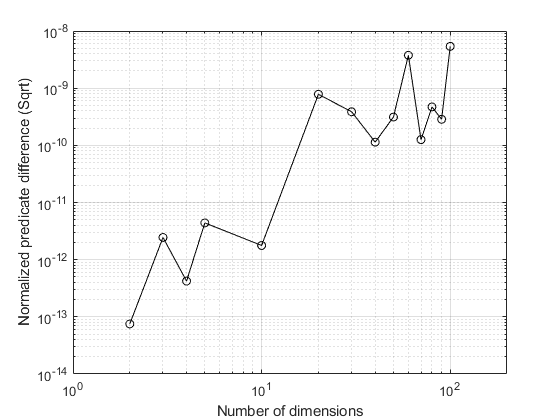}
\caption{Normalized difference between the output of the $inhypersphere_{M}$ predicate computed using the standard approach in conjunction with the square-root decomposition, and the alternative approach.}
\label{sqrt_difference_fig}
\end{figure}

\begin{figure}[h!]
\centering
\includegraphics[width=9cm]{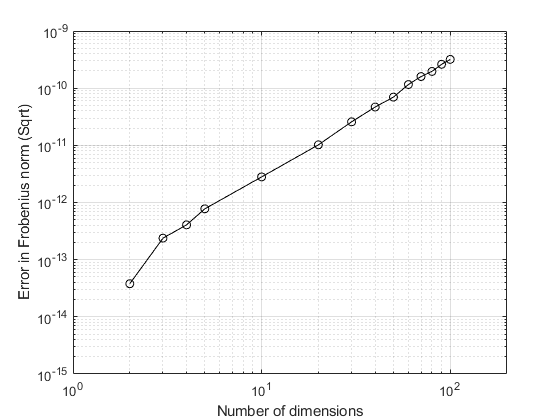}
\caption{Frobenius norm of the error in the square-root decomposition.}
\label{sqrt_error_fig}
\end{figure}

\pagebreak

\section{Quality Improvement}
\label{sec;quality_improvement}

In this section, we introduce three explicit algebraic expressions for assessing the quality of pentatope elements in Euclidean space. Next, we modify these quality heuristics to account for the presence of an anisotropic metric field $M$. This enables us to characterize element quality with respect to a general, Riemannian metric space. Finally, we introduce a wide-array of bistellar flip operations which can be used to improve element quality in both the isotropic and anisotropic cases. 

\subsection{Pentatope Quality Heuristics}

Liu and Joe~\cite{liu1994shape} developed a quality measure for any tetrahedron that ranges from zero to one, where zero is a flat, zero-volume tetrahedron, and one is a regular tetrahedron. Following the derivation of their measure, we arrive at a similar quality heuristic for pentatopes.

\begin{theorem}
    The quality of any pentatope $P(p_{1}, p_{2}, p_{3}, p_{4}, p_{5})$ can be determined by
    \begin{align*}
        \eta^{(1)} = \frac{5^{3/4}\sqrt{384v}}{\sum^{10}_{i=1}l^{2}_{i}},
    \end{align*}
    where $v$ is the hypervolume, and $l_{i}$'s are the edge lengths of the pentatope. Here, `quality' is defined as the degree of similarity between an arbitrary pentatope and a regular pentatope with the same hypervolume.
    \label{geometric_theorem}
\end{theorem}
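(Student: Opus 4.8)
\subsection*{Proof proposal}

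The plan is to follow the mean-ratio construction of Liu and Joe~\cite{liu1994shape}, adapted from the tetrahedral ($d=3$) case to the pentatopal ($d=4$) case. The guiding principle is that a scale-invariant, dimensionless shape measure for a $d$-simplex must compare a quantity of dimension (length)$^2$ in the numerator against the total squared edge length $\sum_i l_i^2$ in the denominator. Since the hypervolume $v$ scales like (length)$^d$, the correctly dimensioned and scale-invariant combination is $v^{2/d}/\sum_i l_i^2$; for $d=4$ this is exactly $\sqrt{v}/\sum_{i=1}^{10} l_i^2$, where the sum runs over the $\binom{5}{2}=10$ edges of the pentatope. Thus the heuristic must take the form $\eta^{(1)} = c\,\sqrt{v}/\sum_{i=1}^{10} l_i^2$, and the entire content of the theorem reduces to fixing the constant $c$.

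To justify this form rather than merely positing it, I would reproduce the Liu--Joe argument: fix a regular reference pentatope, let $S$ denote the Jacobian of the unique affine map carrying it onto $P(p_1,\dots,p_5)$, and define the quality as the mean ratio $\eta = d\,|\det S|^{2/d}/\operatorname{tr}(S^{T}S)$ with $d=4$. The key algebraic step is to show that $|\det S|$ is proportional to the hypervolume $v$ and that $\operatorname{tr}(S^{T}S)$ is proportional to $\sum_{i=1}^{10} l_i^2$. The latter uses the standard identity relating the Frobenius norm of the edge-vector matrix to the sum of squared edge lengths, together with the centroid identity $\sum_{i<j}|p_i-p_j|^2 = (d+1)\sum_i |p_i-\bar p|^2$; this is what makes the resulting measure independent of the (arbitrary) choice of base vertex, and hence symmetric in the five vertices. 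Collecting the reference-dependent constants produced by $\det S$ and $\operatorname{tr}(S^{T}S)$ then yields the claimed closed form.

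The constant $c$ is pinned down by the normalization condition $\eta^{(1)}=1$ for the regular pentatope. For this I would insert the hypervolume of a regular $4$-simplex with edge length $a$, namely $v_{\mathrm{reg}} = \tfrac{\sqrt 5}{96}\,a^4$ (obtainable from $v = \tfrac{a^d}{d!}\sqrt{(d+1)/2^d}$, or equivalently from the Cayley--Menger determinant), together with $\sum_{i=1}^{10} l_i^2 = 10 a^2$. Imposing $c\,\sqrt{v_{\mathrm{reg}}}/(10a^2)=1$ gives $c = 10\sqrt{96}\,/\,5^{1/4}$, which after simplification (using $\sqrt{384}=2\sqrt{96}$ and $5^{1/4}\cdot 5^{3/4}=5$) equals $5^{3/4}\sqrt{384}$, as claimed. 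I would close by recording the two sanity checks implied by the definition of ``quality'': $\eta^{(1)}\to 0$ as $v\to 0$ with the edge lengths held fixed (a degenerate, flat pentatope), and $\eta^{(1)}=1$ precisely for the regular pentatope; scale invariance makes the ``same hypervolume'' normalization in the statement coincide with the unit-edge normalization used here.

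The main obstacle I anticipate is the symmetrization step in the second paragraph: carefully verifying that the vertex-dependent mean ratio collapses to a symmetric expression in $v$ and $\sum_i l_i^2$, and tracking all numerical constants introduced by the reference pentatope through $\det S$ and $\operatorname{tr}(S^{T}S)$. Establishing that the regular pentatope is the \emph{unique} maximizer---an isoperimetric-type statement that among pentatopes with a fixed sum of squared edge lengths the regular one has maximal hypervolume---is a secondary difficulty that would require a Lagrange-multiplier argument, though it is not strictly needed to verify the formula itself.
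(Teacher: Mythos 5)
Your proposal is correct and follows essentially the same route as the paper: both are the Liu--Joe mean-ratio construction, since the paper's matrix $A(R,T)=(R^{-1})^{T}T^{T}TR^{-1}$ is exactly $S^{T}S$ for the affine Jacobian $S$ you introduce, and its GM/AM eigenvalue ratio is your $d\,|\det S|^{2/d}/\operatorname{tr}(S^{T}S)$ with $d=4$. The only cosmetic difference is that the paper normalizes by choosing the reference pentatope to have the same hypervolume as $P$ (forcing $\det A=1$) and verifies $\operatorname{tr}(A)=\tfrac{2}{5a^{2}}\sum_{i=1}^{10}l_i^{2}$ by direct computation, whereas you fix the constant at the end by imposing $\eta^{(1)}=1$ on the regular pentatope; your constant $10\sqrt{96}/5^{1/4}=5^{3/4}\sqrt{384}$ matches.
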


\begin{proof}
    Consider a regular pentatope $P_{r}(r_{1}, r_{2}, r_{3}, r_{4}, r_{5})$ with edge length $a$, and the same hypervolume as an arbitrary pentatope $P(p_{1}, p_{2}, p_{3}, p_{4}, p_{5})$. The coordinates of the regular pentatope are defined as $r_{1} = (-a\sqrt{3}/2, 0, 0, 0)$, $r_{2} = (0, -a/2, 0, 0)$, $r_{3} = (0, a/2, 0, 0)$, $r_{4} = (-a\sqrt{3}/6, 0, a\sqrt{6}/3, 0)$, and $r_{5} = (-a\sqrt{3}/6, 0, a\sqrt{6}/12, a\sqrt{10}/4)$.  Let $R = [(r_2 - r_1), (r_3 - r_1), (r_4 - r_1), (r_5 - r_1)]$ and $T = [(p_2 - p_1), (p_3 - p_1), (p_4 - p_1), (p_5 - p_1)]$. Then
    \begin{align*}
        R = a
        \begin{bmatrix}
            \sqrt{3}/2 & \sqrt{3}/2 & \sqrt{3}/3 & \sqrt{3}/3\\
            -1/2 & 1/2 & 0 & 0\\
            0 & 0 & \sqrt{6}/3 & \sqrt{6}/12\\
            0 & 0 & 0 & \sqrt{10}/4
        \end{bmatrix}, \qquad
        R^{-1} = \frac{1}{a}
        \begin{bmatrix}
            \frac{\sqrt{3}}{3} & -1 & -\frac{\sqrt{6}}{6} & -\frac{\sqrt{10}}{10}\\
            \frac{\sqrt{3}}{3} & 1 & -\frac{\sqrt{6}}{6} & -\frac{\sqrt{10}}{10}\\
            0 & 0 & \frac{\sqrt{6}}{2} & -\frac{\sqrt{10}}{10}\\
            0 & 0 & 0 & \frac{2\sqrt{10}}{5}
        \end{bmatrix},
    \end{align*}
    and
    \begin{align*}
        T^{T}T = 
        \begin{bmatrix}
            d_{12} & (d_{12} + d_{13} - d_{23})/{2} & (d_{12} + d_{14} - d_{24})/{2} & (d_{12} + d_{15} - d_{25})/{2}\\
            (d_{12} + d_{13} - d_{23})/{2} & d_{13} & (d_{13} + d_{14} - d_{34})/{2} & (d_{13} + d_{15} - d_{35})/{2}\\
            (d_{12} + d_{14} - d_{24})/{2} & (d_{13} + d_{14} - d_{34})/{2} & d_{14} & (d_{14} + d_{15} - d_{45})/{2}\\
            (d_{12} + d_{15} - d_{25})/{2} & (d_{13} + d_{15} - d_{35})/{2} & (d_{14} + d_{15} - d_{45})/{2} & d_{15}
        \end{bmatrix}.
    \end{align*}
    Here, $a = (96v/\sqrt{5})^{1/4}$ and $d_{ij} = (t_j - t_i)^{T}(t_j - t_i)$, $1 \leq i < j \leq 5$. The hypervolume $v$ is defined as
    \begin{align*}
        v &= \frac{\sqrt{d+1}}{2^{d/2}d!}a^{d} = \frac{\sqrt{4+1}}{2^{4/2}4!}a^{4} = \frac{\sqrt{5}}{96}a^{4}.
    \end{align*}
    Next, we define
    \begin{align*}
        &A(R,T) \equiv (R^{-1})^{T}T^{T}TR^{-1} = \frac{1}{a^2}
        \begin{bmatrix}
            \alpha & \# & \# & \#\\
            \# & \beta & \# & \#\\
            \# & \# & \gamma & \#\\
            \# & \# & \# & \delta
        \end{bmatrix},
    \end{align*}
    where $\alpha = (2d_{12} + 2d_{13} - d_{23})/3$, $\beta = d_{23}$, $\gamma = (3d_{14} + 3d_{34} + 3d_{24} - d_{12} - d_{13} - d_{23})/6$, and $\delta = (4d_{15} + 4d_{25} + 4d_{35} + 4d_{45} - d_{12} - d_{13} - d_{14} - d_{23} - d_{24} - d_{34})/10$. In addition, irrelevant values are indicated by the \# sign.

    The quality heuristic $\eta^{(1)}$ can be defined as the ratio of the geometric and arithmetic means for the inscribed 4-ellipsoid in the pentatope. As a result, $\eta^{(1)}$ can be written as follows
    \begin{align}
        \eta^{(1)} \equiv \frac{4\sqrt[4]{\lambda_{1}\lambda_{2}\lambda_{3}\lambda_{4}}}{\lambda_{1} + \lambda_{2} + \lambda_{3} + \lambda_{4}},
        \label{geo_ari_ratio}
    \end{align}
    where $\lambda_{1}$, $\lambda_{2}$, $\lambda_{3}$, and $\lambda_{4}$ are the eigenvalues of $A(R,T)$. Next, we have that
    \begin{align}
        \nonumber \lambda_{1} + \lambda_{2} + \lambda_{3} + \lambda_{4} &= \mathrm{trace}(A(R,T))\\ 
        \nonumber &= \frac{2}{5a^2}(d_{12} + d_{13} + d_{14} + d_{15} + d_{23} + d_{24} + d_{25} + d_{34} + d_{35} + d_{45})\\
        &= \frac{2}{5a^2}\sum^{10}_{i=1}l^{2}_{i}.
        \label{trace_def}
    \end{align}
    Because $P_r$ and $P$ have the same hypervolume,
    \begin{align}
        \lambda_{1} \lambda_{2} \lambda_{3} \lambda_{4} = \mathrm{det}(A(R,T)) = 1.
        \label{det_def}
    \end{align}

    By substituting Eqs.~\eqref{trace_def} and~\eqref{det_def} into Eq.~\eqref{geo_ari_ratio}, the desired quality heuristic for a pentatope is obtained
    \begin{align*}
        \eta^{(1)} &= \frac{4\sqrt[4]{\mathrm{det}(A(R,T))}}{\mathrm{trace}(A(R,T))} = \frac{4}{\frac{2}{5a^{2}}\sum^{10}_{i=1}l^{2}_{i}} = \frac{10\Bigg( \frac{96}{\sqrt{5}}v \Bigg)^{1/2}}{\sum^{10}_{i=1}l^{2}_{i}} = \frac{5^{3/4}\sqrt{384v}}{\sum^{10}_{i=1}l^{2}_{i}}.
    \end{align*}
\end{proof}

\begin{corollary}
    The quality of any pentatope $P(p_{1}, p_{2}, p_{3}, p_{4}, p_{5})$ can be determined by
    \begin{align*}
        \eta^{(2)} = \frac{6\sum^{10}_{i=1}l^{2}_{i}}{\sqrt{\Theta\left(\left\{l_{i}^{2} \right\}_{i=1}^{10} \right)}},
    \end{align*}
    where $l_{i}$ are the edge lengths of the pentatope, and
    \begin{align*}
        \Theta\left(\left\{l_{i}^{2} \right\}_{i=1}^{10} \right) &\equiv  600 (l_{1}^{2} - l_{2}^{2})^2 + 900 l_{5}^{4} + 
     100 (-2 (l_{1}^{2} + l_{2}^{2}) + l_{5}^{2})^2 \\ \nonumber
     &+ 
     75 (l_{1}^{2} - l_{2}^{2} - 3 l_{6}^{2} + 3 l_{8}^{2})^2 + 
     25 (l_{1}^{2} + l_{2}^{2} - 3 l_{3}^{2} + l_{5}^{2} - 3 (l_{6}^{2} + l_{8}^{2}))^2 \\
     \nonumber &+ 
     25 (l_{1}^{2} + l_{2}^{2} - 6 l_{3}^{2} - 2 l_{5}^{2} + 3 (l_{6}^{2} + l_{8}^{2}))^2 \\
     \nonumber &+ 
     45 (l_{1}^{2} - l_{2}^{2} + l_{6}^{2} - 4 l_{7}^{2} - l_{8}^{2} + 4 l_{9}^{2})^2 \\
     \nonumber &+ 
     15 (l_{1}^{2} + l_{2}^{2} + 2 l_{3}^{2} - 8 l_{4}^{2} - 2 l_{5}^{2} - l_{6}^{2} + 4 l_{7}^{2} - l_{8}^{2} + 
        4 l_{9}^{2})^2 \\ 
       \nonumber &+ 
     30 (-l_{1}^{2} - l_{2}^{2} + l_{3}^{2} + 2 l_{4}^{2} - l_{5}^{2} + l_{6}^{2} + 2 l_{7}^{2} + l_{8}^{2} + 2 l_{9}^{2} - 
        6 l_{10}^{2})^2 \\ \nonumber
        &+ 
     9 (l_{1}^{2} + l_{2}^{2} + l_{3}^{2} - 4 l_{4}^{2} + l_{5}^{2} + l_{6}^{2} - 4 l_{7}^{2} + l_{8}^{2} - 
        4 (l_{9}^{2} + l_{10}^{2}))^2.
    \end{align*}
    \label{rms_theorem}
\end{corollary}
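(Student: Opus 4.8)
The plan is to define $\eta^{(2)}$ through the \emph{same} matrix $A(R,T)$ that appears in the proof of Theorem~\ref{geometric_theorem}, but to compare the arithmetic and quadratic (root-mean-square) means of its eigenvalues, rather than the geometric and arithmetic means used for $\eta^{(1)}$. Concretely, letting $\lambda_1,\lambda_2,\lambda_3,\lambda_4$ denote the eigenvalues of $A(R,T)$, I set
\begin{align*}
    \eta^{(2)} \equiv \frac{\tfrac{1}{4}\left(\lambda_1+\lambda_2+\lambda_3+\lambda_4\right)}{\sqrt{\tfrac{1}{4}\left(\lambda_1^2+\lambda_2^2+\lambda_3^2+\lambda_4^2\right)}} = \frac{\mathrm{trace}(A(R,T))}{2\sqrt{\mathrm{trace}(A(R,T)^2)}}.
\end{align*}
By the power-mean (Cauchy--Schwarz) inequality this ratio lies in $(0,1]$, attaining $1$ precisely when all four eigenvalues coincide, i.e.\ for a regular pentatope, so it is a legitimate quality heuristic. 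Crucially, the ratio is invariant under $A\mapsto cA$, so the equal-hypervolume normalization $\det(A)=1$ from Eq.~\eqref{det_def} plays no role here; this is exactly why the final expression depends only on the edge lengths (with the reference edge $a$ cancelling) and not on the hypervolume $v$.

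Next, I would recycle the trace already computed in Theorem~\ref{geometric_theorem}: Eq.~\eqref{trace_def} gives $\mathrm{trace}(A(R,T)) = \tfrac{2}{5a^2}\sum_{i=1}^{10} l_i^2$. The one new ingredient is $\mathrm{trace}(A(R,T)^2)$. Since $A$ is symmetric, $\mathrm{trace}(A^2)=\|A\|_F^2=\sum_{i,j}A_{ij}^2$, so---unlike in the Theorem, where only the four diagonal entries $\alpha,\beta,\gamma,\delta$ and the determinant were needed---I now require the six off-diagonal entries that were suppressed as $\#$. I would recover them by substituting the explicit Gram matrix $T^{T}T$ (with $d_{ij}=l^2$) and the explicit inverse $R^{-1}$ from the proof of Theorem~\ref{geometric_theorem} into $A=(R^{-1})^{T}T^{T}T R^{-1}$ and expanding. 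Writing $A=a^{-2}B$, where each entry of the symmetric matrix $B$ is a linear form in the squared edge lengths, yields $\mathrm{trace}(A^2)=a^{-4}\|B\|_F^2$.

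The heart of the argument is then the purely algebraic identity
\begin{align*}
    \Theta\!\left(\left\{l_i^2\right\}_{i=1}^{10}\right) = 900\,\|B\|_F^2 = 900\sum_{i=1}^{4} B_{ii}^2 + 1800\sum_{1\le i<j\le 4} B_{ij}^2.
\end{align*}
Indeed, the four diagonal contributions $900\,B_{ii}^2$ reproduce the four ``pure'' terms of $\Theta$: using $B_{22}=\beta=l_5^2$ gives $900\,l_5^4$; using $B_{11}=\alpha$ gives $100\left(-2(l_1^2+l_2^2)+l_5^2\right)^2$; and $B_{33}=\gamma$, $B_{44}=\delta$ likewise reproduce the $25(\cdots)^2$ and $9(\cdots)^2$ terms. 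The remaining six terms of $\Theta$, carrying coefficients $600,75,25,45,15,30$, are exactly the off-diagonal contributions $1800\,B_{ij}^2$ once each $B_{ij}$ is written out. Granting this identity, $\mathrm{trace}(A^2)=\Theta/(900a^4)$, and substitution gives
\begin{align*}
    \eta^{(2)} = \frac{\mathrm{trace}(A)}{2\sqrt{\mathrm{trace}(A^2)}} = \frac{\tfrac{2}{5a^2}\sum_{i=1}^{10} l_i^2}{\tfrac{1}{15a^2}\sqrt{\Theta}} = \frac{6\sum_{i=1}^{10} l_i^2}{\sqrt{\Theta}},
\end{align*}
with the factor $a^2$ cancelling as predicted; the regular pentatope ($l_i=a$, $\Theta=3600a^4$) returns $\eta^{(2)}=1$ as a sanity check.

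The main obstacle is establishing the identity $\Theta=900\,\|B\|_F^2$. Computing the six off-diagonal entries $B_{ij}$ (each a linear combination of the ten variables $d_{ij}$), squaring and summing them against the four known diagonal entries, and matching the result coefficient-by-coefficient to the stated weights $(600,900,100,75,25,25,45,15,30,9)$ is a dense but mechanical quadratic-form computation in ten variables. I would carry it out, and independently re-verify it, with a computer-algebra system, since doing so by hand is error-prone. There is no conceptual subtlety beyond this bookkeeping: the decomposition of $\Theta$ into ten squares is nothing more than the Frobenius-norm expansion of the symmetric matrix $B$, split into its four diagonal and six off-diagonal entries.
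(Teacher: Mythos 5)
Your proposal is correct and follows essentially the same route as the paper: the paper likewise defines $\eta^{(2)}$ as the arithmetic-mean-to-RMS ratio of the eigenvalues of $A(R,T)$, reuses the trace from Theorem~\ref{geometric_theorem}, and identifies $\sqrt{\sum_i \lambda_i^2}$ with the Frobenius norm of $A$, whose explicit expansion in the squared edge lengths is precisely $\frac{1}{30a^2}\sqrt{\Theta}$ (your identity $\Theta = 900\,\|B\|_F^2$ with $A = a^{-2}B$). The only difference is presentational: you spell out that the off-diagonal entries suppressed as $\#$ in the Theorem must now be computed, whereas the paper simply states the resulting ten-square expansion.
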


\begin{proof}
    In the previous theorem, a quality heuristic was defined based on the ratio of the geometric and arithmetic means of the pentatope's inscribed 4-ellipsoid. It turns out, we can also define a heuristic based on the ratio of the arithmetic mean and the root-mean-square (RMS) as follows
    \begin{align}
        \eta^{(2)} \equiv \frac{\lambda_{1} + \lambda_{2} + \lambda_{3} + \lambda_{4}}{2\sqrt{\lambda_{1}^2 +\lambda_{2}^2 +\lambda_{3}^2+\lambda_{4}^2}}.
        \label{rms_ari_ratio}
    \end{align}
    The denominator of this expression is twice the Frobenius norm of $A(R,T)$
    \begin{align}
       \nonumber \sqrt{\lambda_{1}^2 +\lambda_{2}^2 +\lambda_{3}^2+\lambda_{4}^2} & = \sqrt{\mathrm{trace}(A(R,T) A(R,T)^T)} = \left\| A(R,T) \right\|_{F} \\
       \nonumber &=\frac{1}{30 a^2} \Big[600 (d_{12} - d_{13})^2 + 900 d_{23}^2 + 
     100 (-2 (d_{12} + d_{13}) + d_{23})^2 \\ \nonumber
     &+ 
     75 (d_{12} - d_{13} - 3 d_{24} + 3 d_{34})^2 + 
     25 (d_{12} + d_{13} - 3 d_{14} + d_{23} - 3 (d_{24} + d_{34}))^2 \\
     \nonumber &+ 
     25 (d_{12} + d_{13} - 6 d_{14} - 2 d_{23} + 3 (d_{24} + d_{34}))^2 \\
     \nonumber &+ 
     45 (d_{12} - d_{13} + d_{24} - 4 d_{25} - d_{34} + 4 d_{35})^2 \\
     \nonumber &+ 
     15 (d_{12} + d_{13} + 2 d_{14} - 8 d_{15} - 2 d_{23} - d_{24} + 4 d_{25} - d_{34} + 
        4 d_{35})^2 \\ 
       \nonumber &+ 
     30 (-d_{12} - d_{13} + d_{14} + 2 d_{15} - d_{23} + d_{24} + 2 d_{25} + d_{34} + 2 d_{35} - 
        6 d_{45})^2 \\ \nonumber
        &+ 
     9 (d_{12} + d_{13} + d_{14} - 4 d_{15} + d_{23} + d_{24} - 4 d_{25} + d_{34} - 
        4 (d_{35} + d_{45}))^2\Big]^{1/2} \\ \nonumber
        &= \frac{1}{30 a^2} \sqrt{\Theta(d_{12}, d_{13}, d_{14}, d_{15}, d_{23}, d_{24}, d_{25}, d_{34}, d_{35}, d_{45})}
        \\
        &= \frac{1}{30 a^2} \sqrt{\Theta\left(l_{1}^{2}, l_{2}^{2},l_{3}^{2},l_{4}^{2},l_{5}^{2},l_{6}^{2},l_{7}^{2},l_{8}^{2},l_{9}^{2},l_{10}^{2} \right)} = \frac{1}{30 a^2} \sqrt{\Theta\left(\left\{l_{i}^{2} \right\}_{i=1}^{10} \right)}.
        \label{fro_norm}
    \end{align}
    Next, if we substitute Eqs.~\eqref{trace_def} and \eqref{fro_norm} into Eq.~\eqref{rms_ari_ratio} we obtain the desired result
    \begin{align*}
        \eta^{(2)} = \frac{\mathrm{trace}(A(R,T))}{2 \sqrt{\mathrm{trace}(A(R,T) A(R,T)^T)}} = \frac{\frac{2}{5a^{2}}\sum^{10}_{i=1}l^{2}_{i}}{\frac{2}{30 a^2} \sqrt{\Theta\left(\left\{l_{i}^{2} \right\}_{i=1}^{10} \right)}} =  \frac{6\sum^{10}_{i=1}l^{2}_{i}}{\sqrt{\Theta\left(\left\{l_{i}^{2} \right\}_{i=1}^{10} \right)}}.
    \end{align*}
\end{proof}


\begin{corollary}
    The quality of any pentatope $P(p_{1}, p_{2}, p_{3}, p_{4}, p_{5})$ can be determined by
    \begin{align*}
        \eta^{(3)} = 6 \cdot 5^{3/4} \sqrt{\frac{ 384 v}{\Theta\left(\left\{l_{i}^{2} \right\}_{i=1}^{10} \right)}},
    \end{align*}
    where $v$ is the hypervolume, and $l_{i}$'s are the edge lengths of the pentatope.
    \label{geom_rms_theorem}
\end{corollary}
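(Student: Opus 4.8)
The plan is to recognize that $\eta^{(3)}$ is nothing more than the product $\eta^{(1)}\eta^{(2)}$ of the two heuristics already established, so that no new geometric computation is required. Concretely, recall that $\eta^{(1)}$ in Eq.~\eqref{geo_ari_ratio} is the ratio of the geometric mean to the arithmetic mean of the eigenvalues $\lambda_1,\ldots,\lambda_4$ of $A(R,T)$, while $\eta^{(2)}$ in Eq.~\eqref{rms_ari_ratio} is the ratio of the arithmetic mean to the root-mean-square (RMS) of the same eigenvalues. I would therefore \emph{define} $\eta^{(3)}$ as the ratio of the geometric mean to the RMS,
\[
\eta^{(3)} \equiv \frac{2\sqrt[4]{\lambda_1\lambda_2\lambda_3\lambda_4}}{\sqrt{\lambda_1^2+\lambda_2^2+\lambda_3^2+\lambda_4^2}},
\]
and observe immediately that the arithmetic-mean term is common to both denominators and numerators, so that $\eta^{(3)} = \eta^{(1)}\eta^{(2)}$.

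First I would simply multiply the closed-form expressions from Theorem~\ref{geometric_theorem} and Corollary~\ref{rms_theorem}:
\[
\eta^{(1)}\eta^{(2)} = \frac{5^{3/4}\sqrt{384v}}{\sum_{i=1}^{10}l_i^2}\cdot\frac{6\sum_{i=1}^{10}l_i^2}{\sqrt{\Theta\left(\left\{l_{i}^{2} \right\}_{i=1}^{10} \right)}}.
\]
The common factor $\sum_{i=1}^{10}l_i^2$ cancels, leaving exactly $6\cdot5^{3/4}\sqrt{384v/\Theta\left(\left\{l_{i}^{2} \right\}_{i=1}^{10} \right)}$, which is the claimed formula. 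This is essentially the entire argument; the cancellation of the arithmetic-mean factor is precisely what makes the combined geometric-mean-to-RMS heuristic collapse into a single clean expression.

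As a cross-check worth including, I would also derive the same result directly from the eigenvalue quantities already computed. Using $\det(A(R,T))=1$ from Eq.~\eqref{det_def} we have $\sqrt[4]{\lambda_1\lambda_2\lambda_3\lambda_4}=1$, and using Eq.~\eqref{fro_norm} we have $\sqrt{\lambda_1^2+\lambda_2^2+\lambda_3^2+\lambda_4^2}=\frac{1}{30a^2}\sqrt{\Theta\left(\left\{l_{i}^{2} \right\}_{i=1}^{10} \right)}$. Substituting both into the definition of $\eta^{(3)}$ yields $\eta^{(3)}=60a^2/\sqrt{\Theta}$, and then inserting $a^2=(96v/\sqrt{5})^{1/2}$ from the relation $a=(96v/\sqrt{5})^{1/4}$ reproduces the stated constant.

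The only real obstacle is bookkeeping of the numerical constant: one must confirm that $60a^2 = 6\cdot5^{3/4}\sqrt{384v}$, i.e. that $60\,(96/\sqrt{5})^{1/2} = 6\cdot5^{3/4}\sqrt{384}$. After writing $\sqrt{384}=2\sqrt{96}$ this reduces to the identity $60\cdot5^{-1/4}=12\cdot5^{3/4}$, which holds because $5\cdot5^{-1/4}=5^{3/4}$. Since the product route already sidesteps this verification by reusing the pre-simplified constants of the two earlier results, I would present the product argument as the primary proof and relegate the eigenvalue substitution to a confirming remark.
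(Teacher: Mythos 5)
Your proposal is correct and takes essentially the same route as the paper: the paper likewise defines $\eta^{(3)}$ as the geometric-mean-to-RMS ratio and obtains the formula immediately by multiplying the heuristics of Theorem~\ref{geometric_theorem} and Corollary~\ref{rms_theorem}. Your additional eigenvalue cross-check (using $\det(A(R,T))=1$ and the Frobenius-norm expression, with the constant verification $60\cdot 5^{-1/4}=12\cdot 5^{3/4}$) is consistent and merely makes explicit what the paper leaves implicit.
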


\begin{proof}
    The quality heuristic above is simply the ratio of the geometric mean and the RMS, for the inscribed 4-ellipsoid. We can obtain this quality heuristic immediately by multiplying the quality heuristics of Theorem~\ref{geometric_theorem} and Corollary~\ref{rms_theorem} together.
\end{proof}

\begin{remark}
    Suppose that the eigenvalues of the inscribed 4-ellipsoid are non-negative, i.e. $\lambda_1 \geq 0$, $\lambda_2 \geq 0$, $\lambda_3 \geq 0$, and $\lambda_4 \geq 0$. Under these circumstances, it is well known that the following relationship holds for the RMS, arithmetic mean (AM), and geometric mean (GM) of the eigenvalues
    \begin{align}
        \mathrm{RMS} \geq \mathrm{AM} \geq \mathrm{GM} \geq 0. \label{inequality_of_means}
    \end{align}
    Therefore, upon dividing Eq.~\eqref{inequality_of_means} by the RMS, we conclude that
    \begin{align*}
        1 \geq \eta^{(2)} \geq \eta^{(3)} \geq 0.
    \end{align*}
    In addition, upon dividing Eq.~\eqref{inequality_of_means} by the AM, we conclude that
    \begin{align*}
        1 \geq \eta^{(1)} \geq 0.
    \end{align*}
    Therefore, each quality heuristic is guaranteed to lie on the interval $[0,1]$. In addition, the third quality heuristic is generally, more conservative than the second. 
\end{remark}

\begin{remark}
    In~\cite{knupp2001algebraic}, Knupp refers to the first quality heuristic, $\eta^{(1)}$, as the `mean ratio shape measure' for triangles and tetrahedra in two and three dimensions, respectively. The three-dimensional version of this heuristic was first introduced by Liu and Joe~\cite{liu1994shape}, as mentioned previously. Here, we have extended this measure to four dimensions, and developed two new heuristics: $\eta^{(2)}$ and $\eta^{(3)}$. 
\end{remark}

\begin{remark}
    All three heuristics, $\eta^{(1)}$, $\eta^{(2)}$, and $\eta^{(3)}$, have desirable properties. In particular, they are dimension-free, domain-general, scale-free, orientation-free, unitless, and referenced, in accordance with the definitions of Table 1 in~\cite{knupp2001algebraic}. Equivalently, in less technical language: the heuristics can be applied to simplices in any number of dimensions $d\geq 2$, they can be applied to all pentatopes (skewed and non-skewed), they do not depend on the size of the pentatope, they do not depend on the orientation of the pentatope, they are non-dimensional, and they assume the value of one for a regular pentatope.  
\end{remark}

\subsection{Pentatope Quality Heuristics in Metric Space}

The quality heuristics from the previous section, $\eta^{(1)}$, $\eta^{(2)}$, and $\eta^{(3)}$ can be easily modified in order to account for the presence of a given metric field $M = M(x,y,z,t)$. One merely needs to replace the Euclidean lengths $l_i$'s and volumes $v$ with metric-scaled lengths and volumes:
\begin{align*}
    l_i \Rightarrow l_{{M}_{i}}, \qquad v \Rightarrow v_{M}.
\end{align*}
The computation of these metric-scaled lengths and volumes can be performed in two ways: i) by evaluating the underlying metric field point-wise, or ii) through numerical integration.

\subsubsection{Point-wise Approach}

Since the metric field is theoretically defined at an infinite number of points, it is convenient to evaluate it at a single point that is associated with our particular element. With this in mind, one may consider the centroid of a space-time pentatope: $(x_c, y_c, z_c, t_c)$. One may evaluate the metric field at this point $M_c = M(x_c, y_c, z_c, t_c)$. Thereafter, one may compute the length of an edge in metric space between points $V_1 = (x_1, y_1, z_1, t_1)$ and $V_2 = (x_2, y_2, z_2, t_2)$  as follows
\begin{align*}
    l_{M_{c}} = \sqrt{(V_1 - V_2)^{T} M_{c} (V_1 - V_2)}.
\end{align*}
In addition, one may compute the volume in metric space directly based on the volume in Euclidean space
\begin{align*}
    v_{M_{c}} = v \sqrt{\det(M_c)}.
\end{align*}

\subsubsection{Integration Approach}

One can also compute the lengths and volumes in metric space more accurately by using numerical integration. In particular, one may compute the length of an edge in metric space as follows
\begin{align*}
    l_{M} = \int_{0}^{1} \sqrt{(V_2 - V_1)^{T} M(V_1 + (V_2 - V_1)\tau) (V_2 - V_1)} d\tau.
\end{align*}
In a similar fashion, we have that
\begin{align*}
    v_{M} = \int_{T^4} \sqrt{\mathrm{det}(M(x,y,z,t))} \, dx dy dz dt.
\end{align*}
In both of the expressions above, the suggested integration can be carried out using numerical quadrature rules, such as those proposed by Gauss, or more recently by~\cite{frontin2021foundations,chuluunbaatar2022pi}.

\subsection{Bistellar Flips}

Bistellar flips have been used successfully in order to improve mesh quality, in accordance with the pioneering work of Shewchuk~\cite{shewchuk2003updating, cheng2013delaunay}. Bistellar flips involve reconnecting/repartitioning groups of low quality elements to create alternative groups of higher quality elements, while simultaneously maintaining the original external facets of the group. There exists  $(d+1)$ possible flipping operations in each dimension, which are associated with the transformations of $k$ elements into $(d+2)-k$ elements, where $d$ is the number of dimensions (introduced previously as $d = 4$) and $k$ is the number of initial elements. In 4D, the five basic flipping operations are: $(1 \rightarrow 5)$, $(2 \rightarrow 4)$, $(3 \rightarrow 3)$, $(4 \rightarrow 2)$, and $(5 \rightarrow 1)$. The number of vertices required for the starting and ending configurations can be determined by Eq.~\eqref{number_vertex}. The basic 4D bistellar flips and their corresponding connectivity transformations are shown in Figures~\ref{flip_1_5}-\ref{flip_3_3}. We note that some of the 3D representations of the 4D flips look identical before and after the transformation because the representation is essentially a \emph{shadow} of a higher dimensional object, and is unable to capture all of its detail.
\begin{align}
    n_{vertex} =
    \begin{cases}
        d+1 & \text{$if \quad k = 1$}\\
        d+2 & \text{$if \quad k > 1$}
    \end{cases}.
    \label{number_vertex}
\end{align}

\begin{figure}[h!]
\centering
$\vcenter{\hbox{\includegraphics[width=3.5cm]{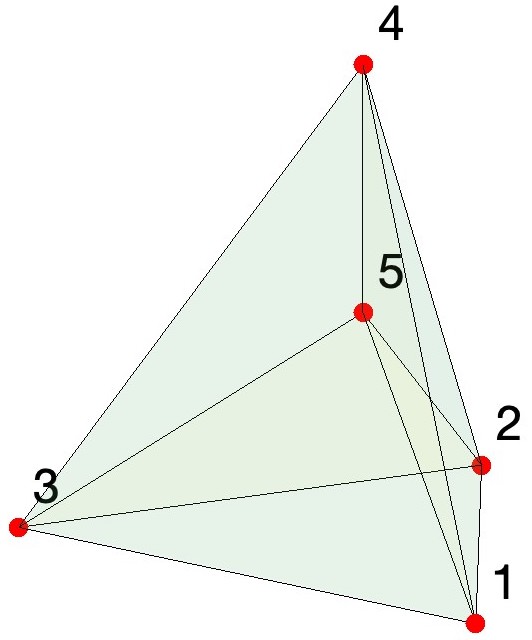}}}$
$\vcenter{\hbox{\includegraphics[width=3.5cm]{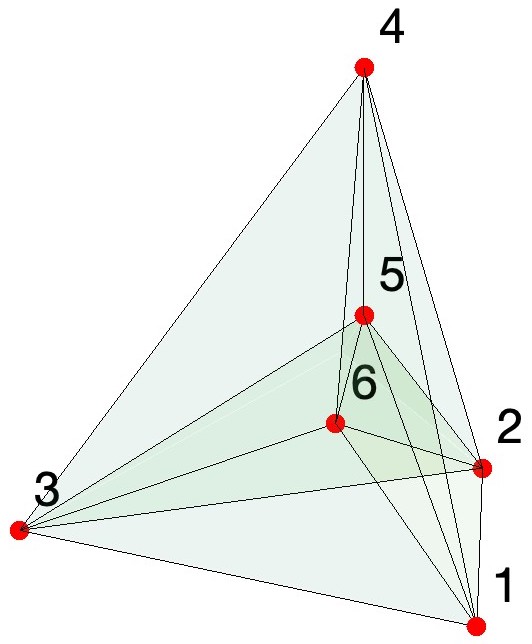}}}$
$\vcenter{\hbox{
\begin{tabular}{|c|c|}
\hline
Stage 1 & Stage 2 \\
\hline
1 2 3 4 5 & 1 2 3 4 6 \\
& 2 3 4 5 6 \\
& 1 3 4 5 6 \\
& 1 2 4 5 6 \\
& 1 2 3 5 6 \\
\hline
\end{tabular}
}}$
\caption{Flip ($1\rightarrow 5$) partitions one pentatope (left) into five (right) by inserting a point within the element and reconnecting. Reversing the process is also valid via flip ($5\rightarrow 1$). The table summarizes the connectivity transformation.}
\label{flip_1_5}
\end{figure}

\begin{figure}[h!]
\centering
$\vcenter{\hbox{\includegraphics[width=2.7cm]{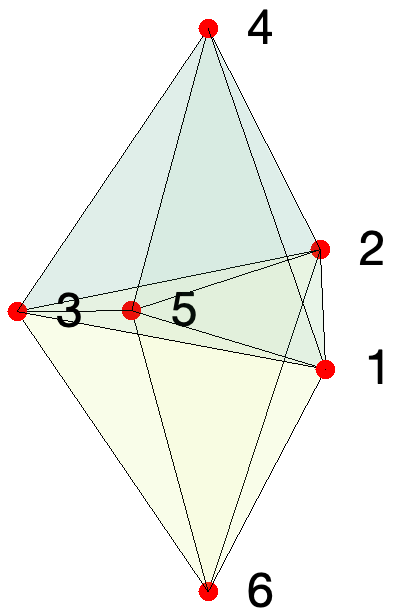}}}$
$\vcenter{\hbox{\includegraphics[width=2.7cm]{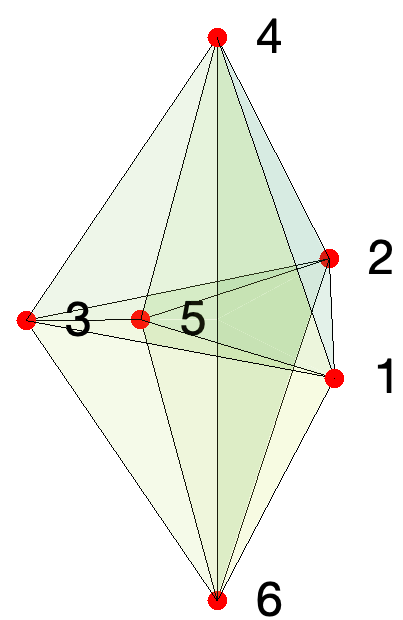}}}$
$\vcenter{\hbox{
\begin{tabular}{|c|c|}
\hline
Stage 1 & Stage 2 \\
\hline
1 2 3 4 5 & 1 2 3 4 6 \\
1 2 3 5 6 & 1 2 4 5 6 \\
& 2 3 4 5 6 \\
& 1 3 4 5 6 \\
\hline
\end{tabular}
}}$
\caption{Flip ($2\rightarrow 4$) partitions two pentatopes (left) into four (right) through reconnection. Reversing the process is also valid via flip ($4\rightarrow 2$). The table summarizes the connectivity transformation.}
\label{flip_2_4}
\end{figure}
%
\begin{figure}[h!]
\centering
$\vcenter{\hbox{\includegraphics[width=3.5cm]{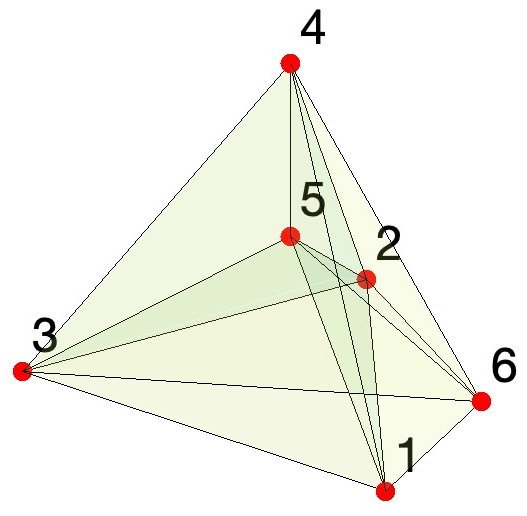}\hspace{1cm}}}$
$\vcenter{\hbox{
\begin{tabular}{|c|c|}
\hline
Stage 1 & Stage 2 \\
\hline
1 2 3 4 5 & 1 2 3 4 6 \\
1 2 4 5 6 & 2 3 4 5 6 \\
1 3 4 5 6 & 1 2 3 5 6 \\
\hline
\end{tabular}
}}$
\caption{Flip ($3\rightarrow 3$) partitions three pentatopes into three alternative pentatopes through reconnection. 3D representations of both configurations are identical. Reversing the process is also valid. The table summarizes the connectivity transformation.}
\label{flip_3_3}
\end{figure}

\noindent The basic 4D flips have already been identified in previous work; other authors refer to them as 4D \emph{Pachner moves}~\cite{banburski2015pachner}. Expanding on the basic set of 4D flips, we derive a new, more exhaustive set of possible flips by extending lower-dimensional flips into higher dimensions as shown in Table~\ref{extended_flips_table}. These extended flips are formed by taking lower-dimensional subsimplices (facets, faces, or edges) of our pentatopes, performing lower-dimensional flips on these entities, then updating the connectivity.  For example, suppose that a grouping of three pentatopes share a triangular face. This face can be split into three triangles by inserting a point and performing a $(1 \rightarrow 3)$ flip in 2D. Thereafter, a reconnection operation can be performed in order to obtain nine pentatopes from the original three. This corresponds to an extended 4D flip, called a $(3 \rightarrow 9)$ flip, which directly inherits from the lower-dimensional $(1 \rightarrow 3)$ 2D flip. We have identified a total of 12 such extended flips, and the complete set is shown in Figures~\ref{flip_4_8}--\ref{flip_8_16} along with their respective connectivity transformations. It is worth noting that some of the 4D extended flips have multiple versions. As an example, the 3D extended $(4 \rightarrow 4)$ flip has three different $(8 \rightarrow 8)$ configurations in 4D.

\begin{remark}
    It is important to note that bistellar flips are not always guaranteed to preserve the Delaunay criterion. Therefore, if one's particular application requires that the Delaunay property is satisfied exactly, bistellar flips must be employed with some degree of caution.
\end{remark}

\begin{table}[h!]
\begin{center}
\begin{tabular}{|p{3.5cm}|p{3.5cm}|p{3.5cm}|}
\hline
\multicolumn{3}{|c|}{\textbf{4D Extended Flips}} \\
\hline
$1D \Rightarrow 4D$ & $2D \Rightarrow 4D$ & $3D \Rightarrow 4D$\\
\hline
$(1 \rightarrow 2) \Rightarrow (4 \rightarrow 8)$ & $(1 \rightarrow 3) \Rightarrow (3 \rightarrow 9)$ & $(1 \rightarrow 4) \Rightarrow (2 \rightarrow 8)$\\
& $(2 \rightarrow 2) \Rightarrow (6 \rightarrow 6)$ & $(2 \rightarrow 3) \Rightarrow (4 \rightarrow 6)$\\
& $(2 \rightarrow 4) \Rightarrow (6 \rightarrow 12)$a & $(4 \rightarrow 4) \Rightarrow (8 \rightarrow 8)^*$\\
& & $(2 \rightarrow 6) \Rightarrow (4 \rightarrow 12)$\\
& & $(3 \rightarrow 6) \Rightarrow (6 \rightarrow 12)$b\\
& & $(4 \rightarrow 8) \Rightarrow (8 \rightarrow 16)$\\
\hline
\end{tabular}
\caption{Lower dimensional flips extended to four dimensions. The * indicates that the $(8 \rightarrow 8)$ flip has three different realizations.}
\label{extended_flips_table}
\end{center}
\end{table}
%

\begin{figure}[h!]
\centering
$\vcenter{\hbox{\includegraphics[width=3cm]{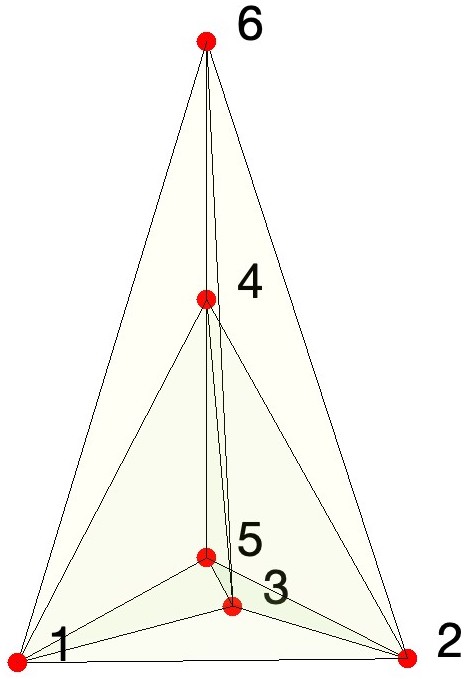}}}$
$\vcenter{\hbox{\includegraphics[width=3cm]{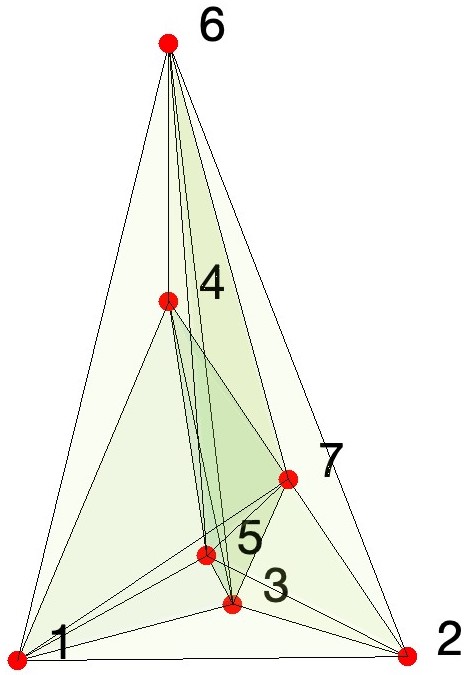}}}$
$\vcenter{\hbox{
\begin{tabular}{|c|c|}
\hline
Stage 1 & Stage 2 \\
\hline
1 2 3 4 5 & 1 3 4 5 7 \\
1 2 4 5 6 & 1 2 3 5 7 \\
2 3 4 5 6 & 1 4 5 6 7 \\
1 2 3 4 6 & 1 2 5 6 7 \\
& 3 4 5 6 7 \\
& 2 3 5 6 7 \\
& 1 3 4 6 7 \\
& 1 2 3 6 7 \\
\hline
\end{tabular}
}}$
\caption{Flip ($4 \rightarrow 8$) partitions four pentatopes (left) into eight (right) by inserting a point along the shared edge and reconnecting. Reversing the process is also valid via flip ($8 \rightarrow 4$).  The flip is a 4D extension of the 1D, ($1 \rightarrow 2$) flip. The table summarizes the connectivity transformation.}
\label{flip_4_8}
\end{figure}

\begin{figure}[h!]
\centering
$\vcenter{\hbox{\includegraphics[width=3.5cm]{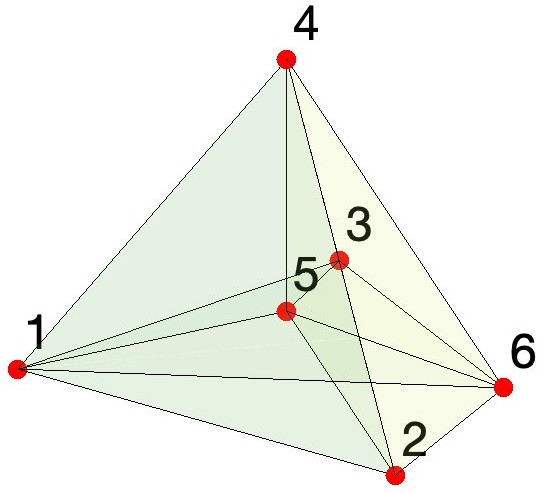}}}$
$\vcenter{\hbox{\includegraphics[width=3.5cm]{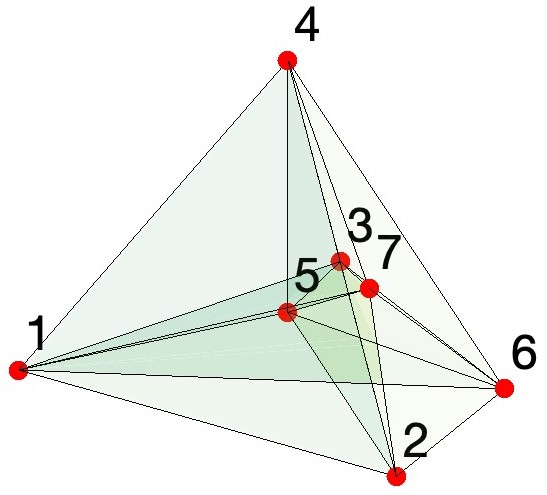}}}$
$\vcenter{\hbox{
\begin{tabular}{|c|c|}
\hline
Stage 1 & Stage 2 \\
\hline
1 2 3 4 5 & 1 2 3 4 7 \\
1 2 4 5 6 & 1 3 4 5 7 \\
2 3 4 5 6 & 1 2 3 5 7 \\
& 1 4 5 6 7 \\
& 1 2 5 6 7 \\
& 1 2 4 6 7 \\
& 3 4 5 6 7 \\
& 2 3 5 6 7 \\
& 2 3 4 6 7 \\
\hline
\end{tabular}
}}$
\caption{Flip ($3 \rightarrow 9$) partitions three pentatopes (left) into nine (right) by inserting a point within the shared triangle and reconnecting. Reversing the process is also valid via flip ($9 \rightarrow 3$). The flip is a 4D extension of the 2D, ($1 \rightarrow 3$) flip. The table summarizes the connectivity transformation.}
\label{flip_3_9}
\end{figure}

\begin{figure}[h!]
\centering
$\vcenter{\hbox{\includegraphics[width=3.5cm]{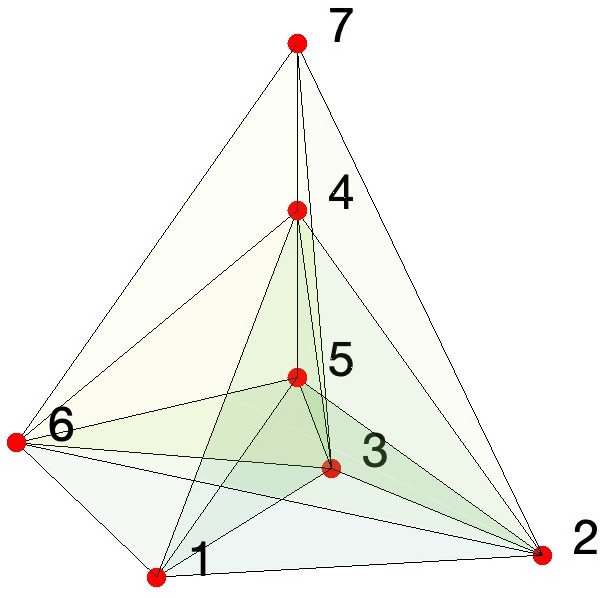}}}$
$\vcenter{\hbox{\includegraphics[width=3.5cm]{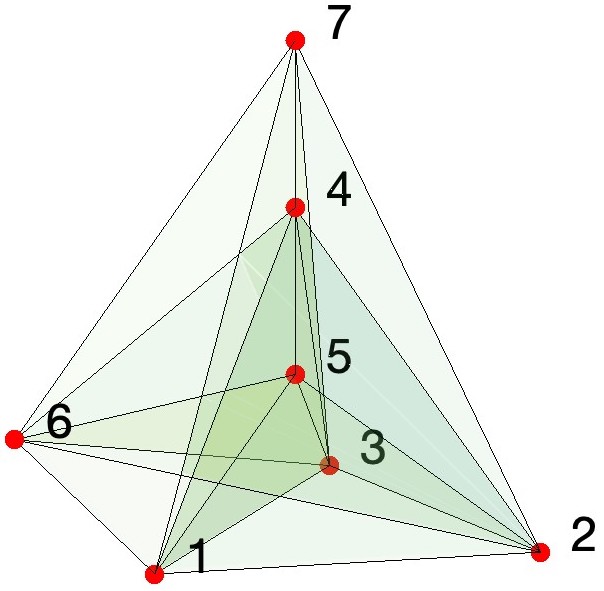}}}$
$\vcenter{\hbox{
\begin{tabular}{|c|c|}
\hline
Stage 1 & Stage 2 \\
\hline
1 2 3 4 5 & 1 2 3 4 7 \\
1 2 4 5 6 & 1 2 4 6 7 \\
1 3 4 5 6 & 1 3 4 6 7 \\
2 3 4 5 7 & 1 2 3 5 7 \\
2 4 5 6 7 & 1 2 5 6 7 \\
3 4 5 6 7 & 1 3 5 6 7 \\
\hline
\end{tabular}
}}$
\caption{Flip ($6\rightarrow6$) partitions six pentatopes (left) into  six alternative pentatopes (right) through reconnection on the shared grouping of triangles. The flip is reversible. In addition, the flip is a 4D extension of the 2D, ($2\rightarrow 2$) flip. The table summarizes the connectivity transformation.}
\label{flip_6_6}
\end{figure}

\begin{figure}[h!]
\centering
$\vcenter{\hbox{\includegraphics[width=5.5cm]{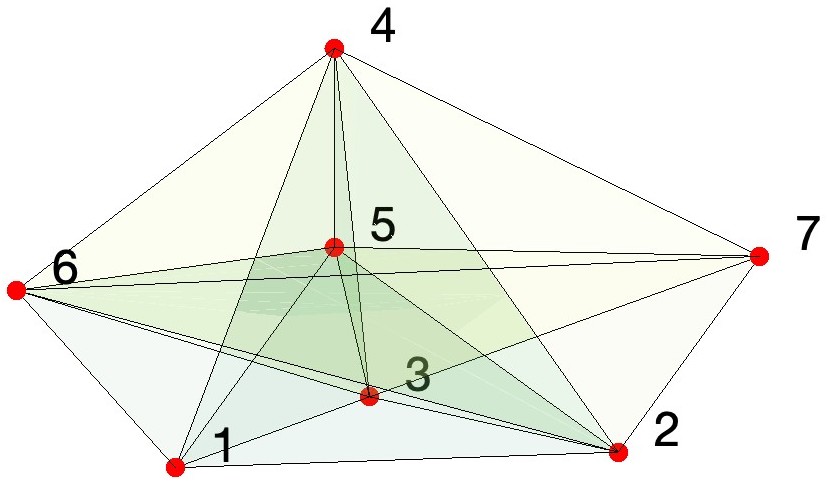}}}$
$\vcenter{\hbox{\includegraphics[width=5.5cm]{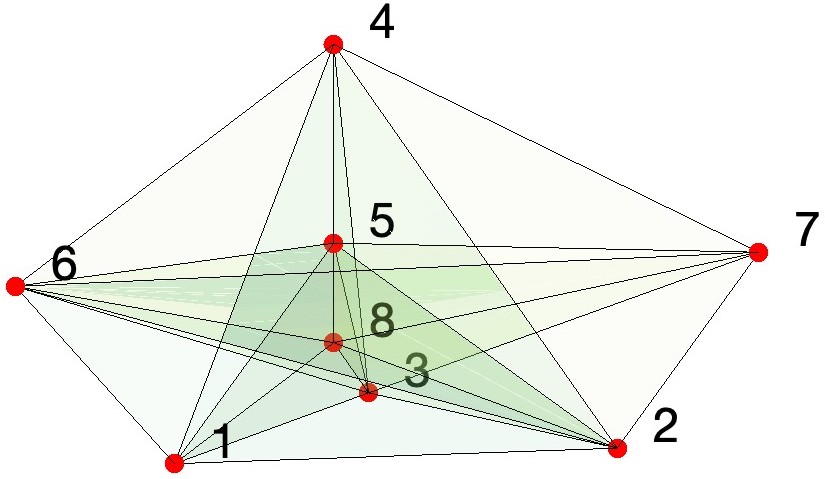}}}$
$\vcenter{\hbox{
\begin{tabular}{|c|c|c|}
\hline
Stage 1 & \multicolumn{2}{|c|}{Stage 2}  \\
\hline
1 2 3 4 5 & 1 2 3 4 8 & 2 3 4 7 8 \\
1 2 4 5 6 & 1 2 4 6 8 & 2 4 6 7 8 \\
1 3 4 5 6 & 1 3 4 6 8 & 3 4 6 7 8 \\
2 3 4 5 7 & 1 2 3 5 8 & 2 3 5 7 8 \\
2 4 5 6 7 & 1 2 5 6 8 & 2 5 6 7 8 \\
3 4 5 6 7 & 1 3 5 6 8 & 3 5 6 7 8 \\
\hline
\end{tabular}
}}$
\caption{Flip ($6\rightarrow 12$)a partitions six pentatopes (left) into twelve (right) by inserting a point within the shared grouping of triangles and reconnecting. Reversing the process is also valid via flip ($12\rightarrow 6$)a. The flip is a 4D extension of the 2D, ($2\rightarrow 4$) flip. The table summarizes the connectivity transformation.}
\label{flip_6_12a}
\end{figure}

\begin{figure}[h!]
\centering
$\vcenter{\hbox{\includegraphics[width=2.3cm]{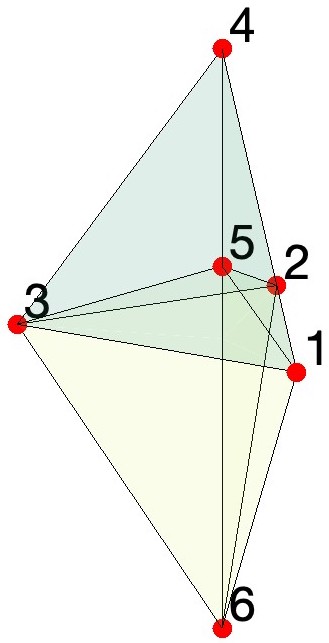}}}$
$\vcenter{\hbox{\includegraphics[width=2.3cm]{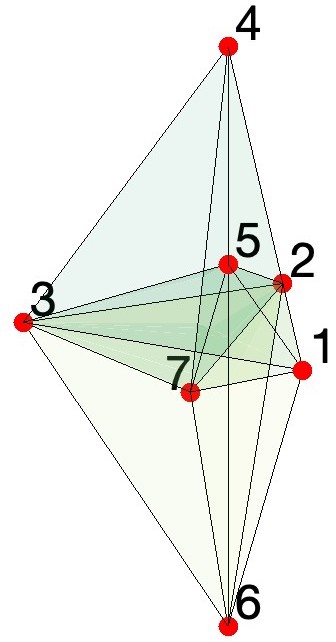}\hspace{1cm}}}$
$\vcenter{\hbox{
\begin{tabular}{|c|c|}
\hline
Stage 1 & Stage 2 \\
\hline
1 2 3 4 5 & 1 2 3 4 7 \\
1 2 3 5 6 & 2 3 4 5 7 \\
& 1 3 4 5 7 \\
& 1 2 4 5 7 \\
& 2 3 5 6 7 \\
& 1 3 5 6 7 \\
& 1 2 5 6 7 \\
& 1 2 3 6 7 \\
\hline
\end{tabular}
}}$
\caption{Flip ($2\rightarrow 8$) partitions two pentatopes (left) into eight (right) by inserting a point within the shared tetrahedron and reconnecting. Reversing the process is also valid via flip ($8 \rightarrow 2$). The flip is a 4D extension of the 3D, ($1\rightarrow 4$) flip. The table summarizes the connectivity transformation.}
\label{flip_2_8}
\end{figure}
%

\begin{figure}[h!]
\centering
$\vcenter{\hbox{\includegraphics[width=4cm]{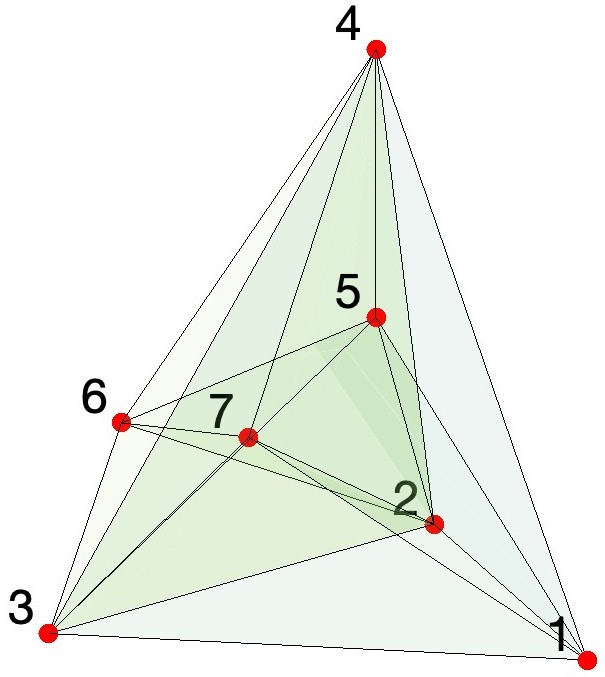}\hspace{1cm}}}$
$\vcenter{\hbox{
\begin{tabular}{|c|c|}
\hline
Stage 1 & Stage 2 \\
\hline
1 2 3 4 5 & 1 2 3 6 7 \\
2 3 4 5 6 & 1 3 4 6 7 \\
1 2 3 4 7 & 1 2 4 5 6 \\
2 3 4 6 7 & 1 3 4 5 6 \\
& 1 2 3 5 6 \\
& 1 2 4 6 7 \\
\hline
\end{tabular}
}}$
\caption{Flip ($4\rightarrow 6$) partitions four pentatopes into six through reconnection of a shared grouping of tetrahedra. The 3D representations of both configurations are identical. Reversing the process is also valid via flip ($6\rightarrow 4$). The flip is a 4D extension of the 3D, ($2\rightarrow 3$) flip. The table summarizes the connectivity transformation.}
\label{flip_4_6}
\end{figure}
%

\begin{figure}[h!]
\centering
$\vcenter{\hbox{\includegraphics[width=2.3cm]{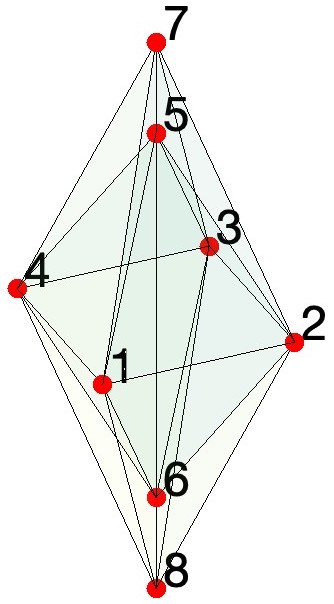}}}$
$\vcenter{\hbox{\includegraphics[width=2.3cm]{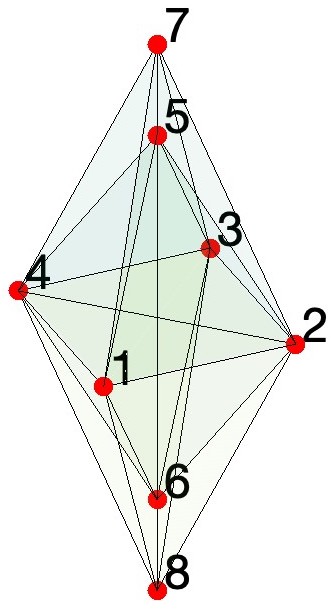}\hspace{1cm}}}$
$\vcenter{\hbox{
\begin{tabular}{|c|c|}
\hline
Stage 1 & Stage 2 \\
\hline
1 2 5 6 7 & 1 2 4 5 7 \\
2 3 5 6 7 & 2 3 4 5 7 \\
3 4 5 6 7 & 1 2 4 6 7 \\
1 4 5 6 7 & 2 3 4 6 7 \\
1 2 5 6 8 & 1 2 4 5 8 \\
2 3 5 6 8 & 2 3 4 5 8 \\
3 4 5 6 8 & 1 2 4 6 8 \\
1 4 5 6 8 & 2 3 4 6 8 \\
\hline
\end{tabular}
}}$
\caption{Flip ($8\rightarrow 8$) version 1, partitions eight pentatopes (left) into eight alternative pentatopes (right) through reconnection of a shared grouping of tetrahedra. The flip is reversible. In addition, the flip is a 4D extension of the 3D, ($4\rightarrow 4$) flip. The table summarizes the connectivity transformation.}
\label{flip_8_8_v1}
\end{figure}

\begin{figure}[h!]
\centering
$\vcenter{\hbox{\includegraphics[width=2.3cm]{flip_8_8_I1.jpg}}}$
$\vcenter{\hbox{\includegraphics[width=2.3cm]{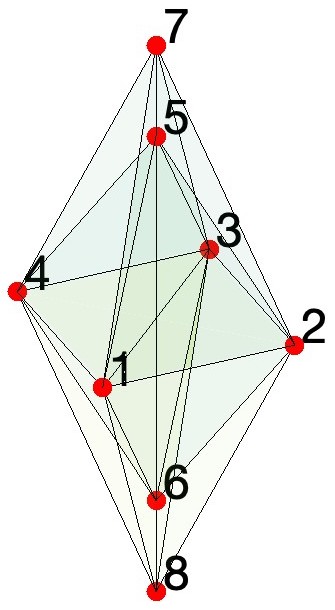}\hspace{1cm}}}$
$\vcenter{\hbox{
\begin{tabular}{|c|c|}
\hline
Stage 1 & Stage 2 \\
\hline
1 2 5 6 7 & 1 2 3 5 7 \\
2 3 5 6 7 & 1 3 4 5 7 \\
3 4 5 6 7 & 1 2 3 6 7 \\
1 4 5 6 7 & 1 3 4 6 7 \\
1 2 5 6 8 & 1 2 3 5 8 \\
2 3 5 6 8 & 1 3 4 5 8 \\
3 4 5 6 8 & 1 2 3 6 8 \\
1 4 5 6 8 & 1 3 4 6 8 \\
\hline
\end{tabular}
}}$
\caption{Flip ($8\rightarrow 8$) version 2, partitions eight pentatopes (left) into eight alternative pentatopes (right) through reconnection of a shared grouping of tetrahedra. The flip is reversible. In addition, the flip is a 4D extension of the 3D, ($4 \rightarrow 4$) flip. The table summarizes the connectivity transformation.}
\label{flip_8_8_v2}
\end{figure}

\begin{figure}[h!]
\centering
$\vcenter{\hbox{\includegraphics[width=2.3cm]{flip_8_8_I2.jpg}}}$
$\vcenter{\hbox{\includegraphics[width=2.3cm]{flip_8_8_I3.jpg}\hspace{1cm}}}$
$\vcenter{\hbox{
\begin{tabular}{|c|c|}
\hline
Stage 1 & Stage 2 \\
\hline
1 2 4 5 7 & 1 2 3 5 7 \\
2 3 4 5 7 & 1 3 4 5 7 \\
1 2 4 6 7 & 1 2 3 6 7 \\
2 3 4 6 7 & 1 3 4 6 7 \\
1 2 4 5 8 & 1 2 3 5 8 \\
2 3 4 5 8 & 1 3 4 5 8 \\
1 2 4 6 8 & 1 2 3 6 8 \\
2 3 4 6 8 & 1 3 4 6 8 \\
\hline
\end{tabular}
}}$
\caption{Flip ($8 \rightarrow 8$) version 3, partitions eight pentatopes (left) into eight alternative pentatopes (right) through reconnection of a shared grouping of tetrahedra. The flip is reversible. In addition, the flip is a 4D extension of the 3D, ($4\rightarrow 4$) flip. The table summarizes the connectivity transformation.}
\label{flip_8_8_v3}
\end{figure}

\begin{figure}[h!]
\centering
$\vcenter{\hbox{\includegraphics[width=4cm]{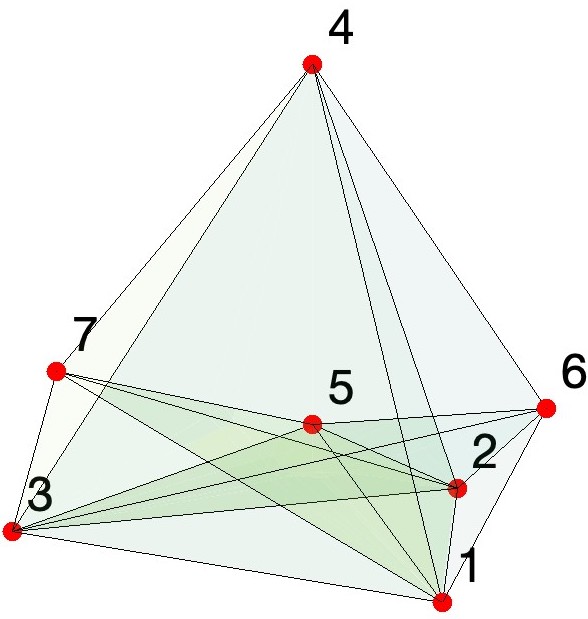}}}$
$\vcenter{\hbox{\includegraphics[width=4cm]{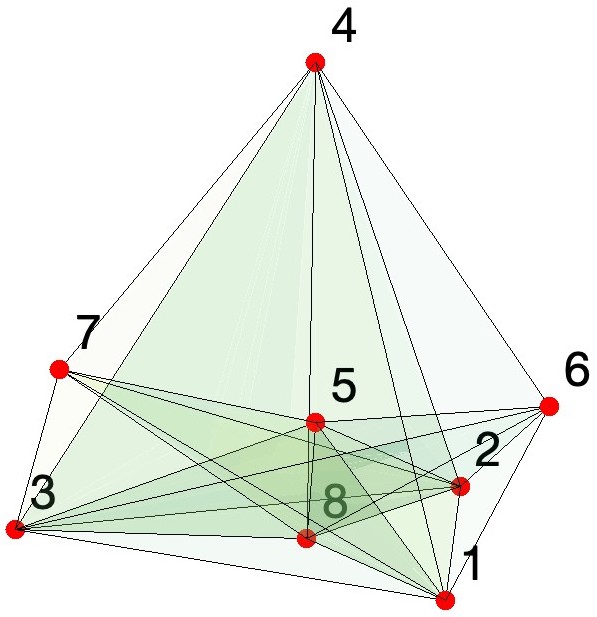}}}$
$\vcenter{\hbox{
\begin{tabular}{|c|c|c|}
\hline
Stage 1 & \multicolumn{2}{|c|}{Stage 2} \\
\hline
1 2 3 4 6 & 2 3 4 6 8 & 2 3 4 7 8  \\
1 2 3 5 6 & 1 3 4 6 8 & 1 3 4 7 8\\
1 2 3 4 7 & 1 2 4 6 8 & 1 2 4 7 8\\
1 2 3 5 7 & 2 3 5 6 8 & 2 3 5 7 8 \\
& 1 3 5 6 8 & 1 3 5 7 8 \\
& 1 2 5 6 8 & 1 2 5 7 8\\
\hline
\end{tabular}
}}$
\caption{Flip ($4 \rightarrow 12$) partitions four pentatopes (left) into twelve (right) by inserting a point within the shared triangle and reconnecting. Reversing the process is also valid via flip ($12 \rightarrow 4$). The flip is a 4D extension of the 3D, ($2\rightarrow 6$) flip. The table summarizes the connectivity transformation.}
\label{flip_4_12}
\end{figure}
%

\begin{figure}[h!]
\centering
$\vcenter{\hbox{\includegraphics[width=5.5cm]{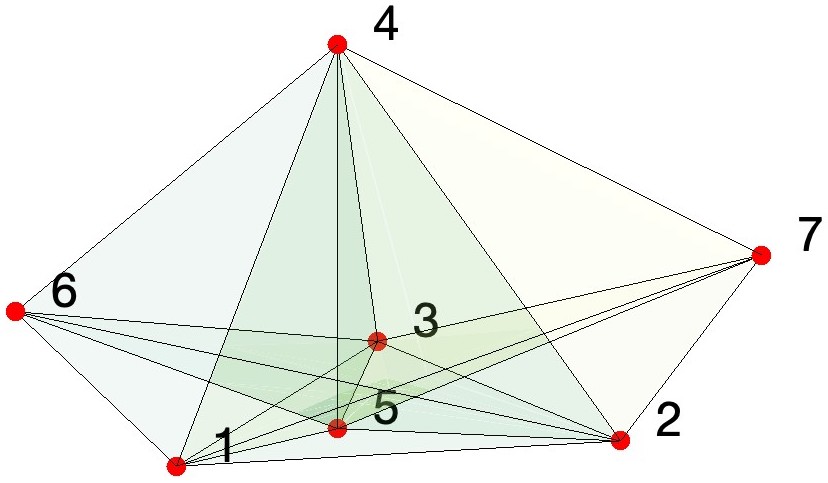}}}$
$\vcenter{\hbox{\includegraphics[width=5.5cm]{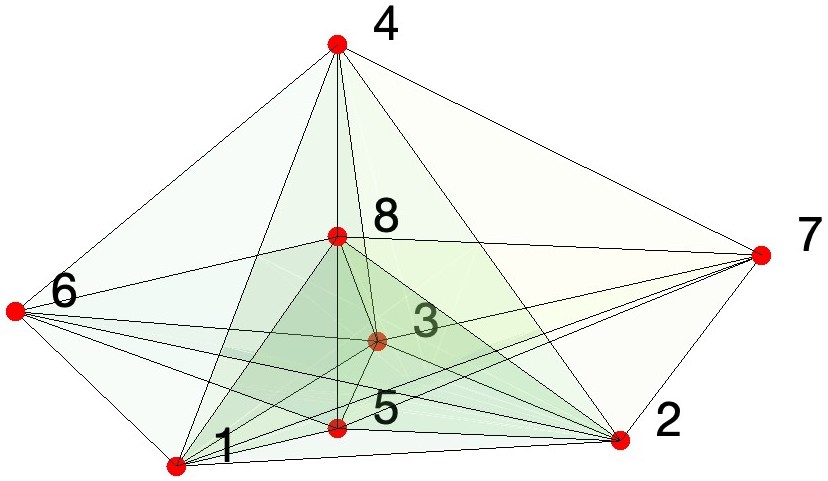}}}$
$\vcenter{\hbox{
\begin{tabular}{|c|c|c|}
\hline
Stage 1 & \multicolumn{2}{|c|}{Stage 2}  \\
\hline
1 2 4 5 6 & 1 2 5 6 8 & 1 2 5 7 8\\
2 3 4 5 6 & 1 2 4 6 8 & 1 2 4 7 8 \\
1 3 4 5 6 & 2 3 5 6 8 & 2 3 5 7 8\\
1 2 4 5 7 & 2 3 4 6 8 & 2 3 4 7 8\\
2 3 4 5 7 & 1 3 5 6 8 & 1 3 5 7 8\\
1 3 4 5 7 & 1 3 4 6 8 & 1 3 4 7 8\\
\hline
\end{tabular}
}}$
\caption{Flip ($6 \rightarrow 12$)b partitions six pentatopes (left) into twelve (right) by inserting a point within the shared grouping of tetrahedra and reconnecting. Reversing the process is also valid via flip ($12 \rightarrow 6$)b. The flip is a 4D extension of the 3D, ($3\rightarrow 6$) flip. The table summarizes the connectivity transformation.}
\label{flip_6_12b}
\end{figure}

\begin{figure}[h!]
\centering
$\vcenter{\hbox{\includegraphics[width=4cm]{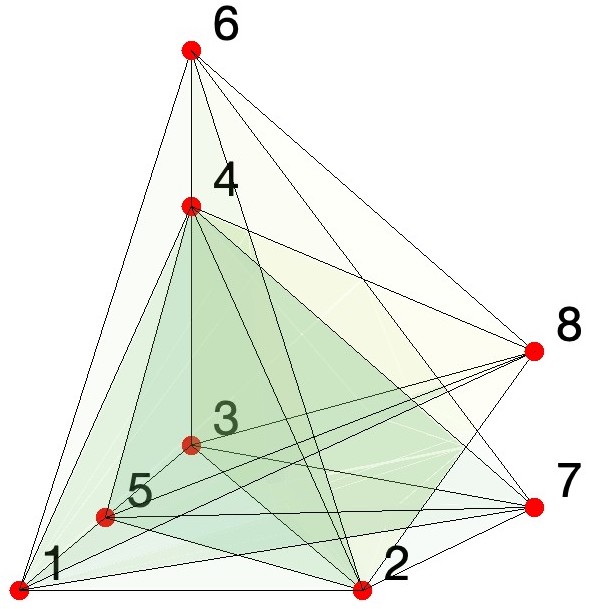}}}$
$\vcenter{\hbox{\includegraphics[width=4cm]{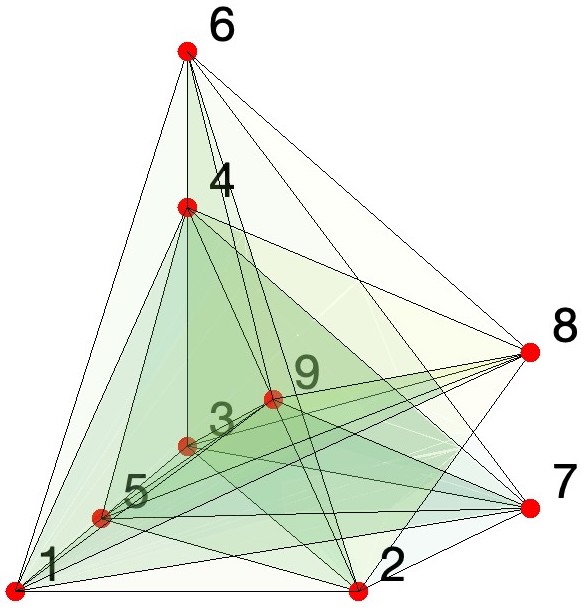}}}$
$\vcenter{\hbox{
\begin{tabular}{|c|c|c|}
\hline
Stage 1 & \multicolumn{2}{|c|}{Stage 2} \\
\hline
1 2 4 5 7 & 2 3 6 7 9 & 1 4 5 8 9 \\
2 3 4 5 7 & 1 4 5 7 9 & 1 2 5 8 9 \\
1 2 4 6 7 & 1 2 5 7 9 & 3 4 5 8 9\\
2 3 4 6 7 & 3 4 5 7 9 & 2 3 5 8 9 \\
1 2 4 5 8 & 2 3 5 7 9 & 1 4 6 8 9\\
2 3 4 5 8 & 1 4 6 7 9 & 1 2 6 8 9\\
1 2 4 6 8 & 1 2 6 7 9 & 3 4 6 8 9\\
2 3 4 6 8 & 3 4 6 7 9 & 2 3 6 8 9\\
\hline
\end{tabular}
}}$
\caption{Flip ($8 \rightarrow 16$) partitions eight pentatopes (left) into sixteen (right) by inserting a point on the shared edge and reconnecting. Reversing the process is also valid via flip ($16 \rightarrow 8$). The flip is a 4D extension of the 3D, ($4 \rightarrow 8$) flip. The table summarizes the connectivity transformation.}
\label{flip_8_16}
\end{figure}

\pagebreak
\clearpage

\section{Numerical Examples}
\label{sec;numerical_examples}

In this section, we assess the validity of our algorithms for point insertion and quality improvement, which were originally presented in sections~\ref{sec;point_insertion} and \ref{sec;quality_improvement}, respectively. These algorithms were implemented as an extension of the JENRE$^{\text{\textregistered}}$  Multiphysics Framework, which is United States government-owned software developed by the Naval Research Laboratory with other collaborating institutions. This software was used in earlier work for space-time finite element methods~\cite{anderson2023surface,Cor19_SCITECH}.

\subsection{Hypervolume Convergence Study}

In this test, our objective was to demonstrate the validity of our Delaunay point-insertion algorithm by generating multiple hypervolume meshes using both the isotropic and anisotropic approaches, and thereafter, assess the volumetric error associated with each mesh. Towards this end, we began by constructing a simple hypercylinder geometry with a radius of $R = 1$ and a length of $L = 4$. The hypercylinder geometry consisted of a 3-sphere which was extruded in the temporal direction in order to generate a space-time cylinder, (as shown in Figure~\ref{simple_hypercylinder_fig}). We note that the hypercylinder is convenient to work with because it is convex. Based on the techniques of~\cite{anderson2023surface}, we generated a family of tetrahedral hypersurface meshes which conformed to the hypercylinder geometry. The diameters of the tetrahedral elements in each mesh were controlled by the following parameters: $h_{\mathrm{sphere}}$, which prescribed the size of elements on the surface of each sphere; and $h_{\mathrm{time}}$, which prescribed the size of elements along the temporal axis of the hypercylinder. The family of meshes was generated by setting $h_{\mathrm{sphere}} = 1$ and $h_{\mathrm{time}} = 1$, and successively refining the meshes by reducing the parameters by factors of 1.5 or 2.0. The properties of each hypersurface mesh are summarized in the second column of Table~\ref{hypercylinder_hypervolume_table}.
\begin{figure}[h!]
\centering
\includegraphics[width=7cm]{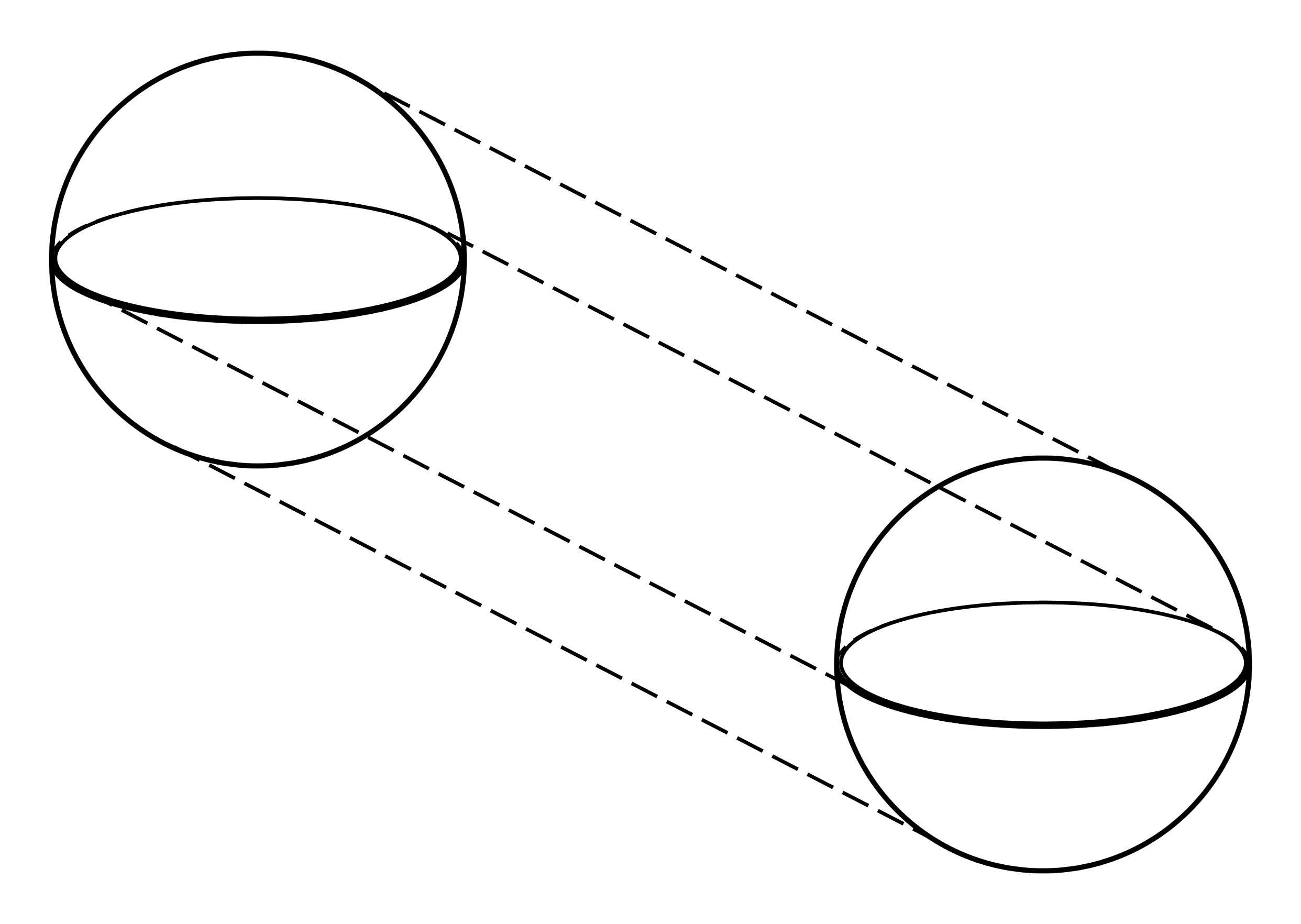}
\caption{An illustration of the hypercylinder geometry. Note: this drawing is \emph{not} to scale.}
\label{simple_hypercylinder_fig}
\end{figure}


After the hypersurface meshes were generated, we proceeded to construct a family of hypervolume meshes which conformed (at least partially) to the hypersurface meshes. We note that boundary facets of the hypervolume meshes were not always guaranteed to coincide with boundary facets of the original hypersurface meshes, as we did not implement a boundary-recovery procedure for the work in this paper. Nevertheless, each hypervolume mesh conformed to the discrete representation of the hypercylinder geometry. This fact was guaranteed due to the convexity of the hypercylinder. The hypervolume meshes were generated by applying the point-insertion process (from section~\ref{sec;point_insertion}) to each of the points in a given hypersurface mesh. In Table~\ref{hypercylinder_hypervolume_table}, we show results for the `isotropic' Delaunay algorithm with $M = \mathbb{I}$, and in Table~\ref{anisotropic_hypercylinder_hypervolume_table}, we show results for the anisotropic Delaunay algorithm with
\begin{align}
    M = \begin{bmatrix}
        1 & 0 & 0 & 0 \\
        0 & 1 & 0 & 0 \\
        0 & 0 & 1 & 0 \\
        0 & 0 & 0 & \wavespeed^2
    \end{bmatrix},
\end{align}
where
\begin{align}
    \wavespeed = \wavespeed_{0} + \frac{1}{\beta}\sqrt{\exp{(-(t-2)^{2})}},
\end{align}
and where $\wavespeed_0 = 1.0$ and $\beta = 0.1$.
\begin{table}[h!]
\begin{center}
\begin{tabular}{| c |r|p{17mm}|r|r| }
\hline
Mesh & Hypersurface Tets & Pentatopes (Isotropic Delaunay) & Vertices & Time \\
\hline
1& 638 & 1,627 & 145 & 1.0 \\
2& 7,505 & 23,197 & 1,581 & 10.5 \\
3& 17,596 & 55,402 & 3,681 & 32.6 \\
4& 40,563 & 114,077 & 8,456 & 503 \\
5& 101,894 & 305,204 & 21,272 & 212 \\
6& 269,623 & 804,103 & 56,154 & 650 \\
\hline
\end{tabular}
\caption{A summary of hypermesh properties for the isotropic Delaunay approach. In the table above, we show the number of hypersurface tetrahedral elements, number of hypervolume pentatope elements, number of vertices, and mesh generation times for a sequence of hypervolume meshes for the hypercylinder test case. The mesh generation times have been normalized relative to the time for generating the first mesh. Note that the implementation is not optimized, and therefore the computational times should be viewed with this in mind.} \label{hypercylinder_hypervolume_table}
\end{center}
\end{table}
\begin{table}[h!]
\begin{center}
\begin{tabular}{| c |p{20mm}|r|r| }
\hline
Mesh & Pentatopes (Anisotropic Delaunay) & Vertices & Time \\
\hline
1 & 1,716   & 145    & 1.0 \\
2 & 22,215  & 1,581  & 28.2 \\
3 & 57,472  & 3,681  & 24.1 \\
4 & 126,393 & 8,456  & 1,103 \\
5 & 330,017 & 21,272 & 150 \\
6 & 889,441 & 56,154 & 499 \\
\hline
\end{tabular}
\caption{A summary of hypermesh properties for the anisotropic Delaunay approach. In the table above, we show the number of hypersurface tetrahedral elements, number of hypervolume pentatope elements, number of vertices, and mesh generation times for a sequence of anisotropic hypervolume meshes for the hypercylinder test case. The mesh generation times have been normalized relative to the time for generating the first mesh. Note that the implementation is not optimized, and therefore the computational times should be viewed with this in mind.} \label{anisotropic_hypercylinder_hypervolume_table}
\end{center}
\end{table}


Figure~\ref{hypercylinder_meshes} shows a projected view of anisotropic Delaunay hypervolume meshes 1 and 2, and Figure~\ref{hypercylinder_meshes_expanded_combined} shows a projected and exploded (expanded) view of meshes 1--3. In addition, Figure~\ref{hypercylinder_meshes_expanded_combined} shows a projected and exploded view of  \emph{isotropic} Delaunay hypervolume meshes 1--3. We note that the un-exploded isotropic and anisotropic meshes look very similar, and therefore, we only show the un-exploded views of the anisotropic Delaunay meshes. In each case, the meshes are visualized by plotting the five tetrahedral facets associated with each pentatope. The plotting is performed using Matlab's tetrahedral meshing function \emph{tetramesh}. The coordinates of the tetrahedral facets are inherently four-dimensional, and therefore, we have created the following projection in order to convert the coordinates into a three-dimensional format
\begin{align*}
    \vec{v}_{\mathrm{proj}} = \left(v_1, v_2, v_3\right) + v_4 \vec{e},
\end{align*}
where
\begin{align*}
    \vec{e} = \frac{1}{\sqrt{3}} \left(1, 1, 1\right).
\end{align*}
This projection creates an image in which the spatial coordinate axes are orthogonal to one another, and the temporal axis is non-orthogonal to the spatial axes. Figure~\ref{tesseract_project} shows an image of a tesseract which is visualized using this projection. The tesseract appears to be constructed from two cubes, connected by diagonal lines. The edges of the cubes correspond to the spatial $x$, $y$, and $z$ directions, and the diagonal edges are aligned with the temporal direction. 

\begin{figure}[h!]
\centering
\includegraphics[width=6cm]{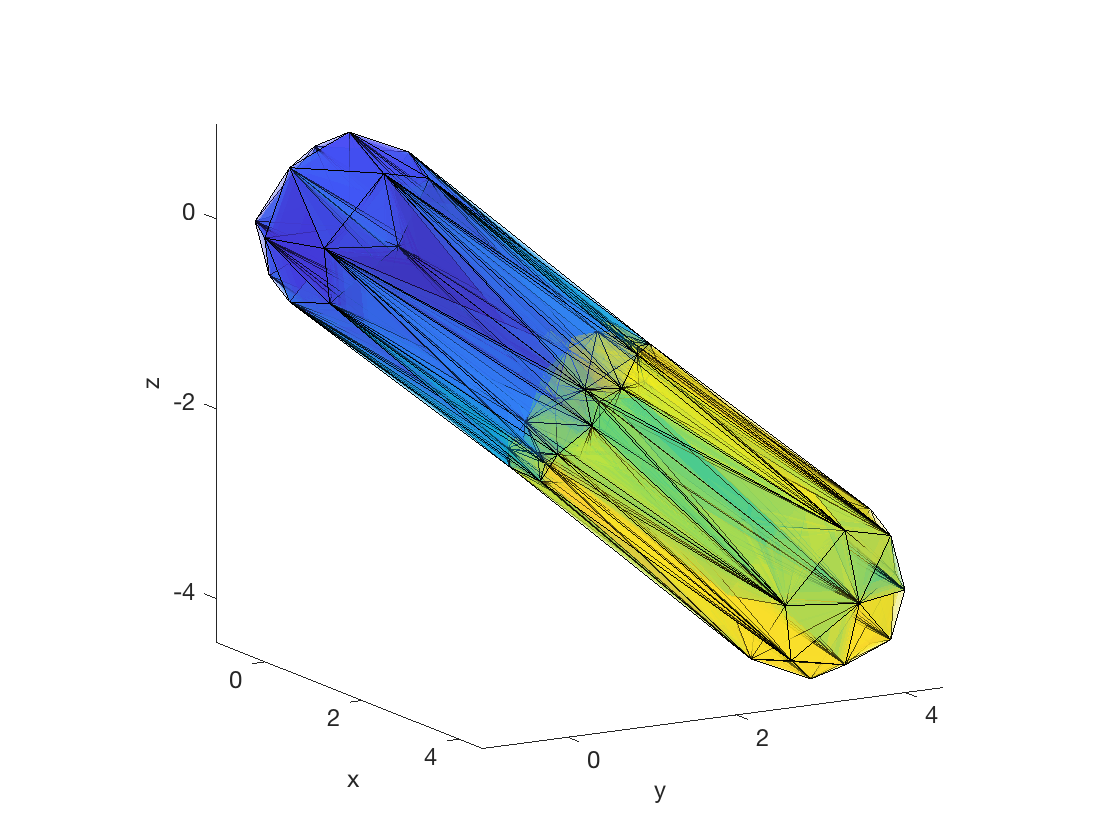}
\includegraphics[width=6cm]{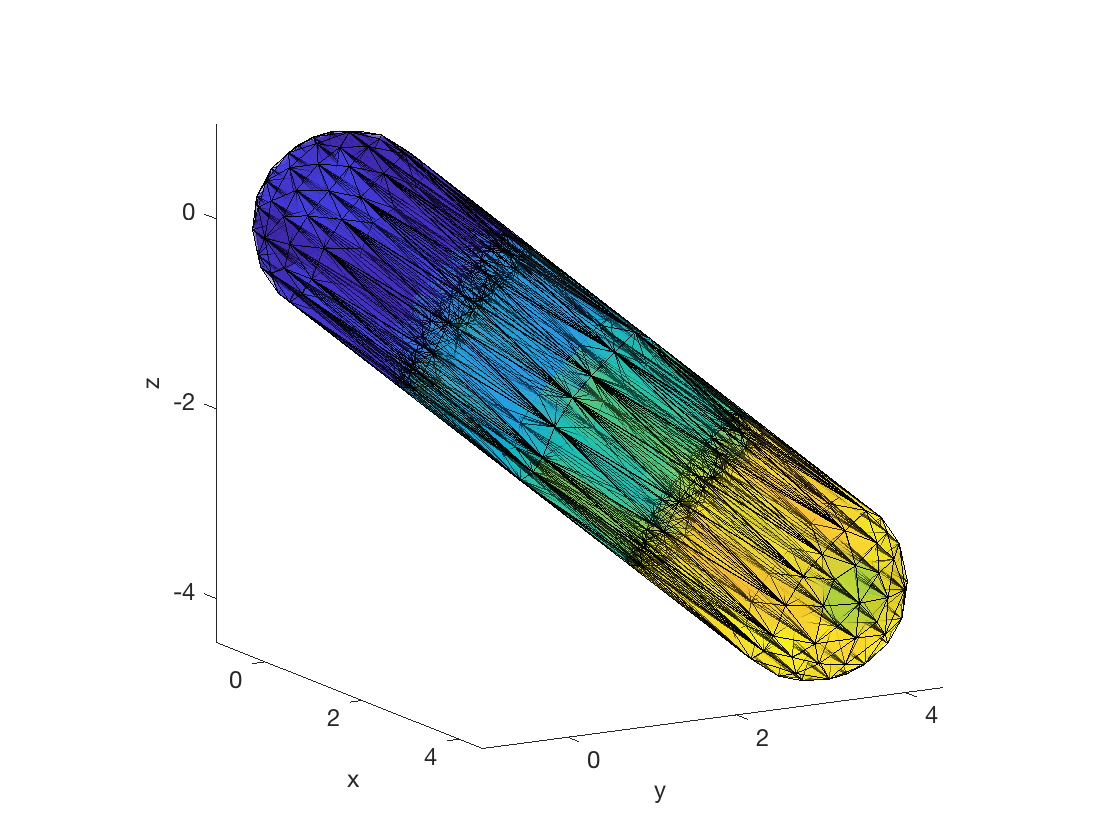}
\caption{Illustrations of hypervolume meshes 1 and 2, for the anisotropic Delaunay approach, left and right, respectively.}
\label{hypercylinder_meshes}
\end{figure}

\begin{figure}[h!]
\centering
\includegraphics[width=6cm]{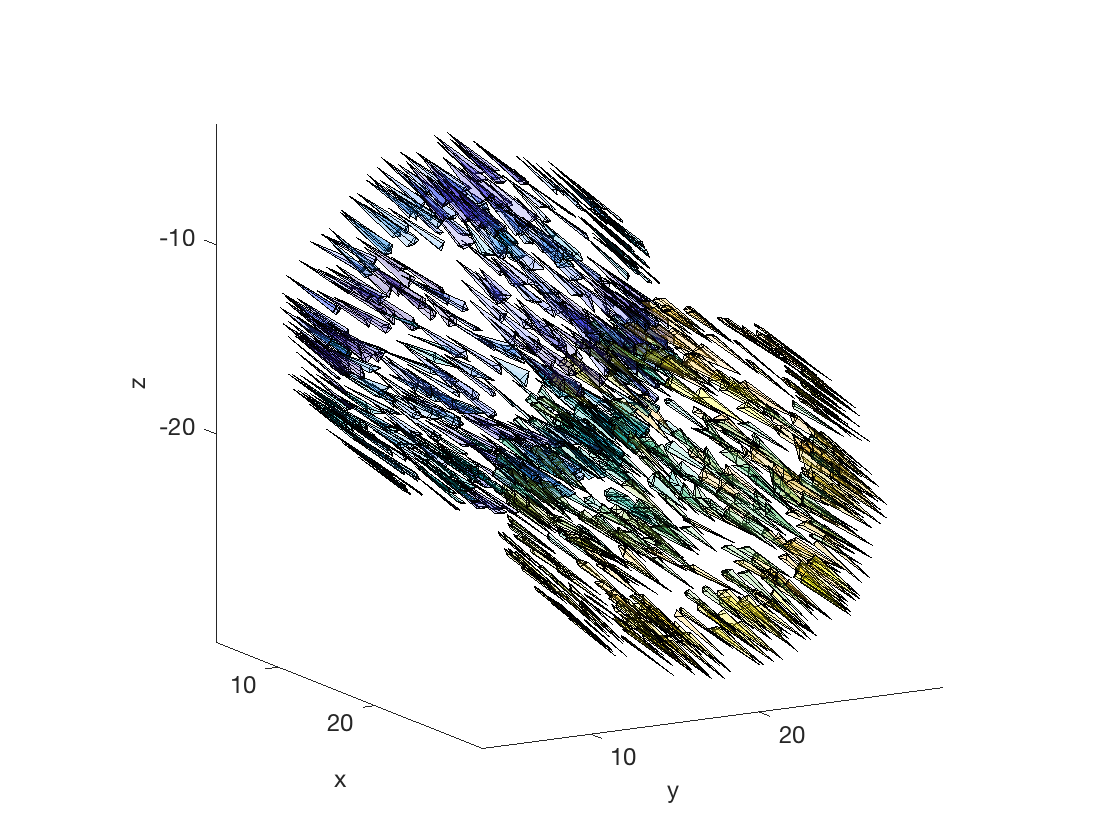}
\includegraphics[width=6cm]{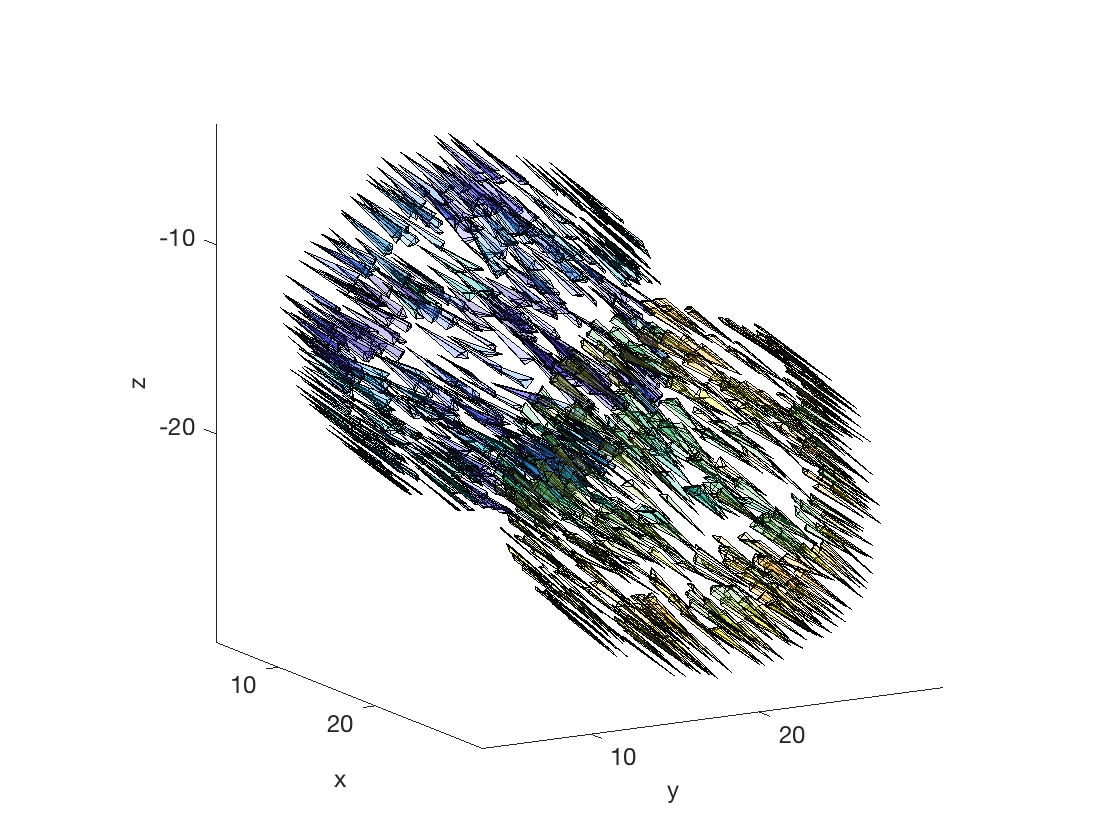}
\includegraphics[width=6cm]{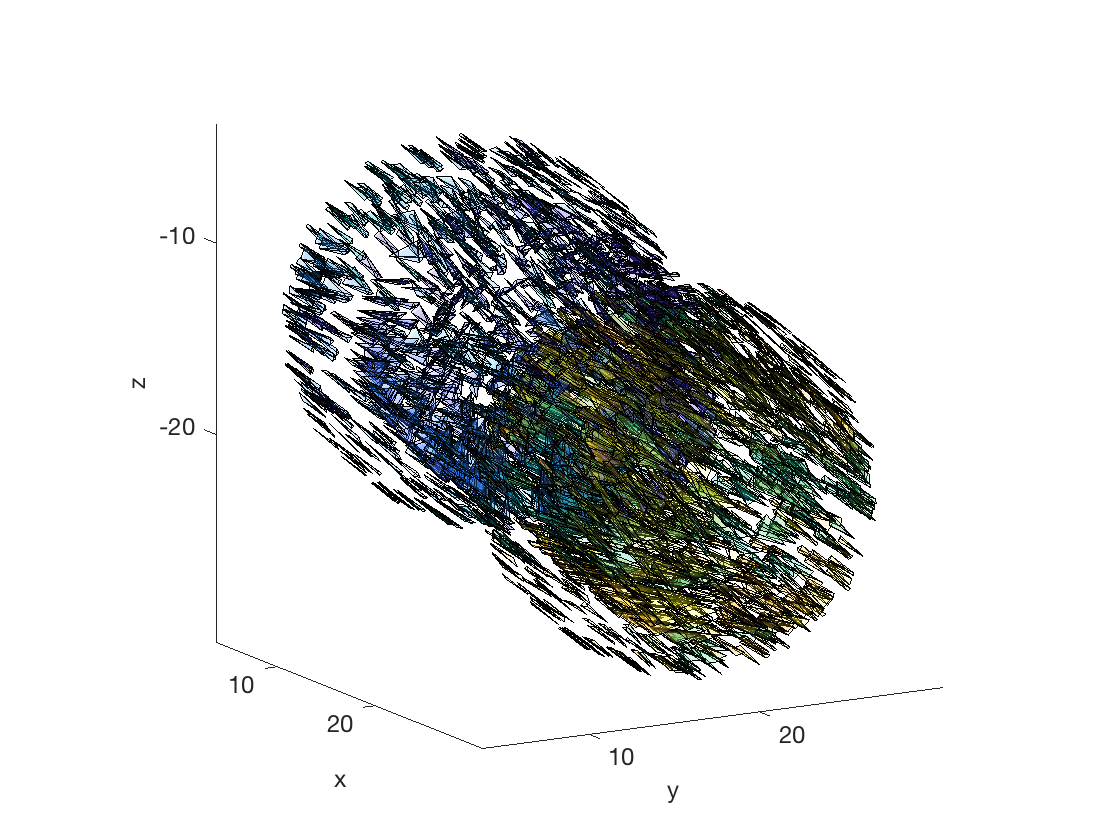}
\includegraphics[width=6cm]{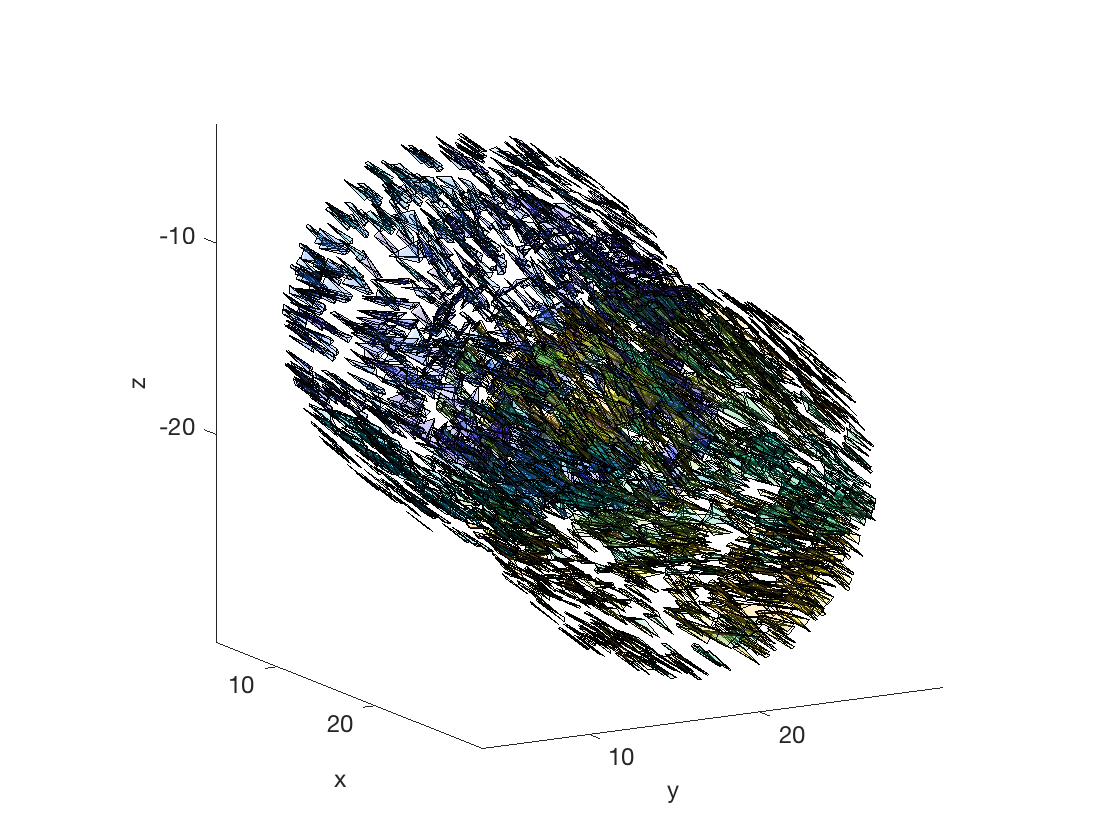}
\includegraphics[width=6cm]{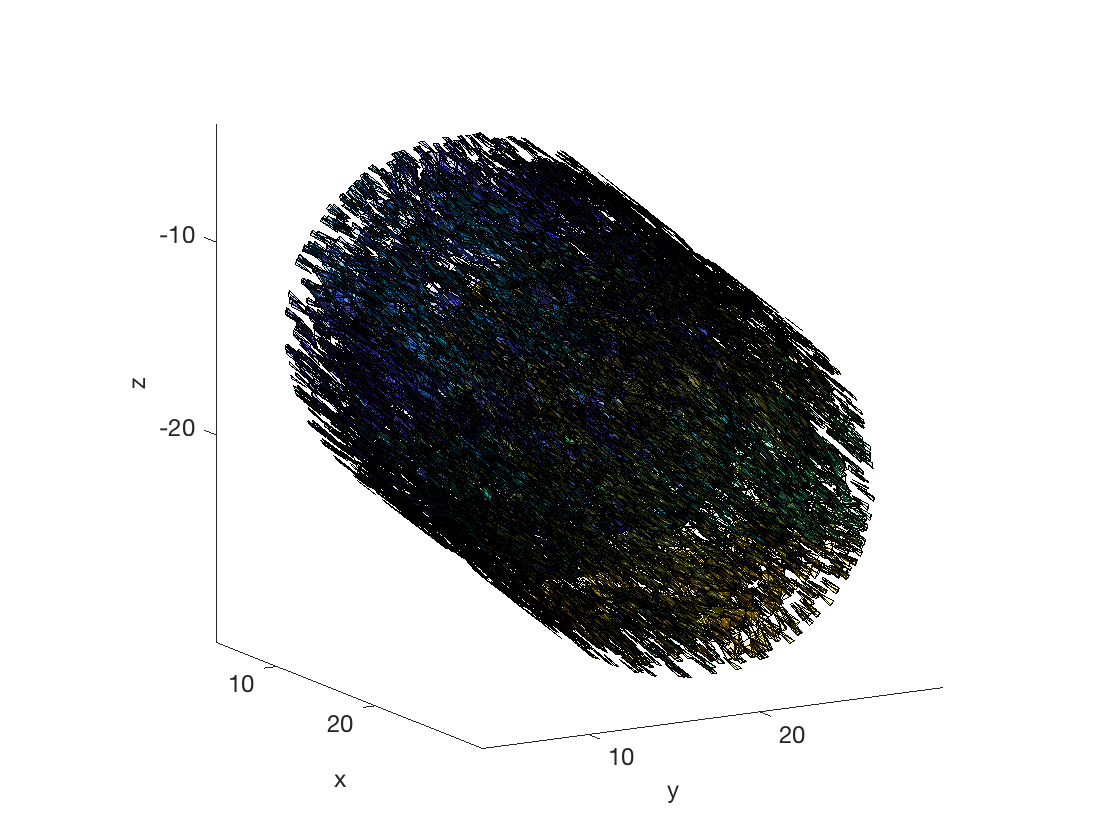}
\includegraphics[width=6cm]{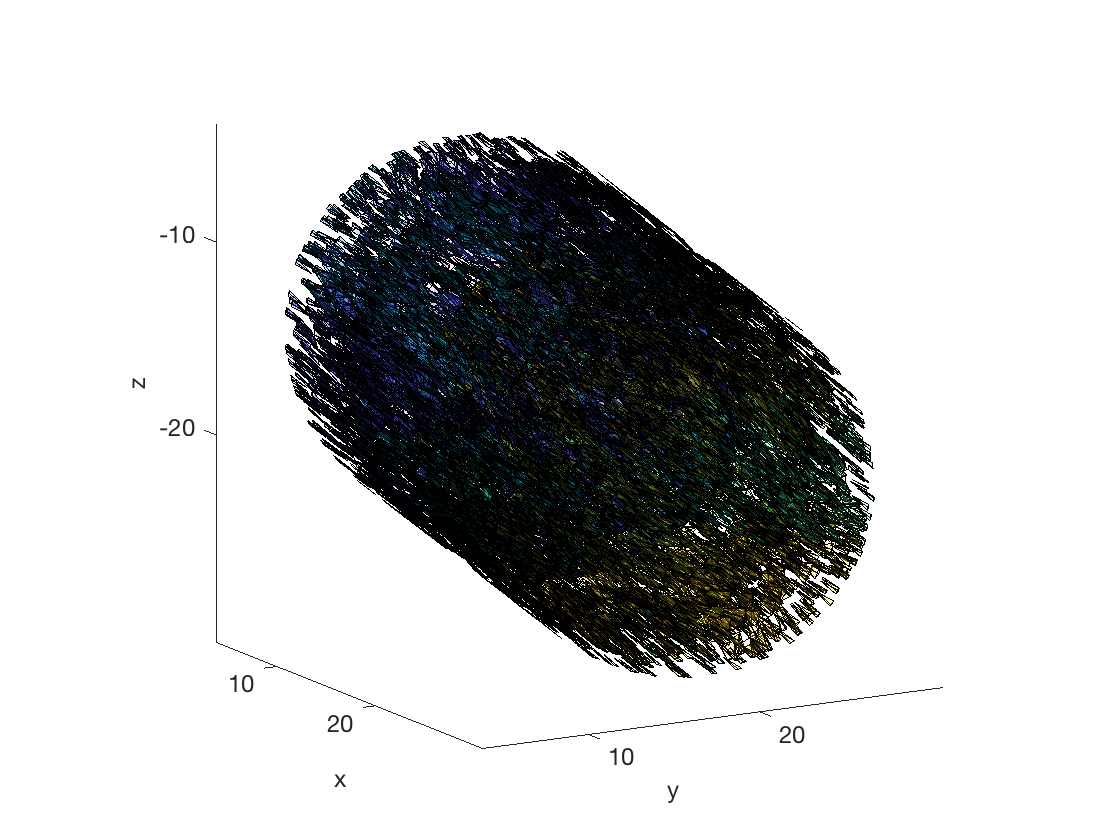}
\caption{Illustrations of `exploded' hypervolume meshes 1, 2, and 3, for the isotropic and anisotropic Delaunay approaches, columns 1 and 2, respectively.  These illustrations have been zoomed-in to show the time range $t = 1.5$ to $t = 2.5$.}
\label{hypercylinder_meshes_expanded_combined}
\end{figure}


\begin{figure}[h!]
\centering
\includegraphics[width=7cm]{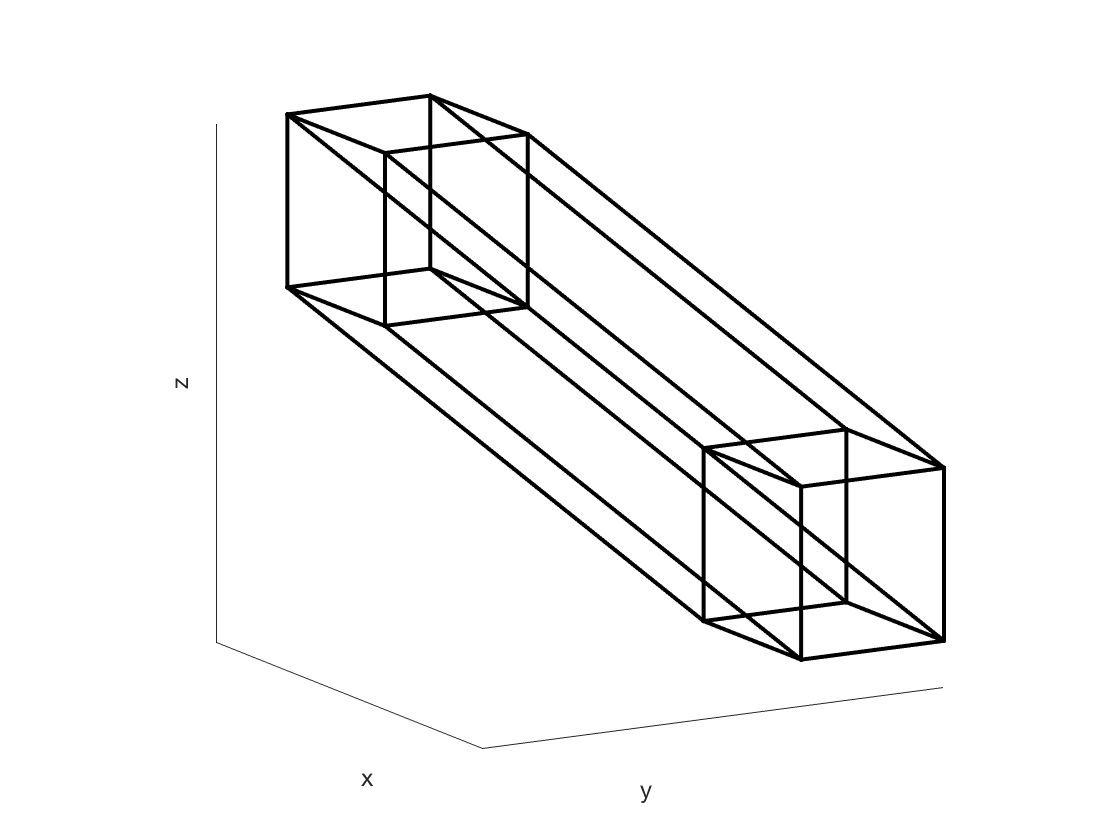}
\caption{Projection of a space-time tesseract.}
\label{tesseract_project}
\end{figure}


The `hypervolume error' associated with each hypervolume mesh was calculated as follows
\begin{align*}
    HV_{\mathrm{error}} = \left| HV_{\mathrm{exact}} - HV_{\mathrm{approx}} \right|,
\end{align*}
where $HV_{\mathrm{exact}}$ is the exact hypercylinder hypervolume given by
\begin{align*}
    HV_{\mathrm{exact}} = \frac{4}{3} \pi R^3 L,
\end{align*}
and $HV_{\mathrm{approx}}$ was the summation of the hypervolumes of the pentatopes in each mesh. Figures~\ref{hypercylinder_error_fig} and~\ref{aniso_hypercylinder_error_fig} show plots of the hypervolume error versus the characteristic mesh spacing for the isotropic and anisotropic approaches. We estimated the characteristic mesh spacing by raising the total number of pentatope elements in each mesh to the -1/3 power. Based on the figures, we can clearly see that the hypervolume error converges at a rate of 2nd order. This is what we would expect when using straight-sided elements to approximate a curved surface. 
\begin{figure}[h!]
\centering
\includegraphics[width=8cm]{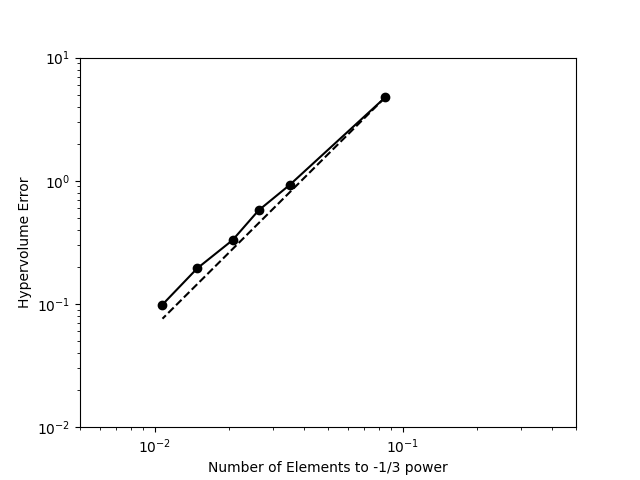}
\caption{The plot above shows the hypervolume error, $HV_{\mathrm{error}}$, of isotropic Delaunay meshes for the hypercylinder test case. The errors are plotted against the characteristic mesh spacing for a sequence of increasingly refined hypervolume meshes. In addition, a dashed line associated with 2nd-order convergence is plotted for reference.}
\label{hypercylinder_error_fig}
\end{figure}

\begin{figure}[h!]
\centering
\includegraphics[width=8cm]{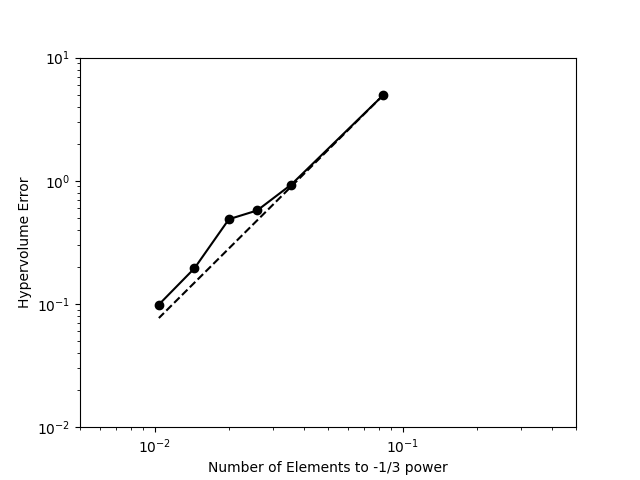}
\caption{The plot above shows the hypervolume error, $HV_{\mathrm{error}}$, of anisotropic Delaunay meshes for the hypercylinder test case. The errors are plotted against the characteristic mesh spacing for a sequence of increasingly refined hypervolume meshes. In addition, a dashed line associated with 2nd-order convergence is plotted for reference.}
\label{aniso_hypercylinder_error_fig}
\end{figure}

\subsection{Quality Improvement Study}

For this numerical experiment, we generated sets of points distributed randomly within a tesseract domain. Thereafter, a mesh for these points was constructed using the isotropic version of the Delaunay point-insertion algorithm of section~\ref{sec;point_insertion}. 
Once a valid mesh containing all of the points was generated within the bounding tesseract, all pentatopes associated with the points from the bounding tesseract were removed, leaving a valid mesh consisting of only the randomly generated points.

We then employed the following algorithm in order to improve the mesh quality. We acknowledge that this algorithm is not necessarily guaranteed to improve the quality of the elements in a given mesh. However, the purpose of this test was to simply establish evidence for the algorithm's effectiveness in practice.

First, we computed the quality of each element using all three of the quality heuristics, $\eta^{(1)}$, $\eta^{(2)}$, and $\eta^{(3)}$, in conjunction with $M = \mathbb{I}$. However, detailed tests were only carried out with $\eta^{(1)}$ for the sake of brevity. Next, the worst quality pentatope was identified and dubbed the \emph{starter pentatope}.

After the starter pentatope had been identified, all possible flips were determined based on its relationship with the set of all pentatopes that share at least one of its vertices. Of the valid flips, we identified those that increased the quality of the lowest quality element, and produced valid reconnections, (i.e. flips that preserve the hypervolume and do not introduce zero hypervolume elements). From this set of flips, the flip with the largest increase in quality of the lowest quality element was executed. It is worth noting that failing to find valid flips for a particular starter pentatope is a common occurrence. Any pentatopes resulting from a flip were \emph{frozen} and no longer allowed to change. In addition, pentatopes chosen as the starter pentatope were logged in order to prevent them from being chosen a second time. Our algorithm terminated when all pentatopes were either frozen or already chosen as the starter pentatope in a previous iteration. 

The tests were performed with small meshes of 50, 100, 150, 200, 250, and 300 points. 
%
%
%
The quality improvement algorithm was successfully employed on each mesh.
In order to investigate the degree of quality improvement, the average quality of the worst one, five, ten, and twenty percent of the total number of elements was calculated before and after executing the flips, using the heuristic $\eta^{(1)}$. The results are shown in Tables~\ref{qi_results_1} and~\ref{qi_results_2}. Figure~\ref{flip_history_one} illustrates how often each of the flips are used when employing the quality improvement algorithm on a cloud of 300 random points. It can be seen that the quality heuristic favors flips that reduce or do not change the total number of elements. Furthermore, it does not use flips that insert additional points.

The initial and final hypervolumes are consistent (up to quadruple precision), before and after quality improvement operations have been attempted, which demonstrates the validity of the bistellar flips. In addition, the flips produce consistent improvement for low quality elements in each mesh. More complex algorithms associated with the bistellar flips may produce better results; however, the purpose of this numerical experiment is to simply demonstrate the validity of these flips and a potential use case.

Finally, we repeated a subset of these tests for heuristics $\eta^{(2)}$ and $\eta^{(3)}$ and obtained similar results---as shown in Tables~\ref{qi_results_eta_2} and \ref{qi_results_eta_3}, and Figures~\ref{flip_history_two} and \ref{flip_history_three}. It appears that these heuristics do incorporate flips that add additional points to the mesh.
%
\begin{table}[h!]
\begin{center}
\begin{tabular}{| c| c| c| c| c| c| }
\hline
\multicolumn{2}{|c|}{} & \multicolumn{2}{|c|}{No.~Pentatopes} \\ 
\hline
No.~Points & No. Flips & Initial & Final \\
\hline
50 & 29 & 492 & 454 \\
100 & 82 & 1,421 & 1,325  \\
150 & 153 & 2,443 & 2,267  \\
200 & 241 & 3,583 & 3,317  \\
250 & 320 & 4,767 & 4,407  \\
300 & 416 & 6,010 & 5,524  \\
\hline
\end{tabular}
\caption{Information for $\eta^{(1)}$ quality improvement cases on randomized meshes.} \label{qi_results_1}
\end{center}
\end{table}
\begin{table}[h!]
\begin{center}
\begin{tabular}{| c| c| c| c| c| c| c| c| c| r| }
\hline
& \multicolumn{2}{|c|}{1\% AMQ} & \multicolumn{2}{|c|}{5\% AMQ} & \multicolumn{2}{|c|}{10\% AMQ} & \multicolumn{2}{|c|}{20\% AMQ} & \\
\hline
No. Points & Initial & Final & Initial & Final & Initial & Final & Initial & Final & Time \\
\hline
50  & 0.1156 & 0.1569 & 0.1609 & 0.1917 & 0.1944 & 0.2272 & 0.2485 & 0.2851 & 1.0 \\
100 & 0.1521 & 0.1638 & 0.2070 & 0.2291 & 0.2440 & 0.2673 & 0.2937 & 0.3180 & 8.1 \\
150 & 0.1169 & 0.1312 & 0.1768 & 0.1932 & 0.2121 & 0.2296 & 0.2621 & 0.2828 & 21.6 \\
200 & 0.1061 & 0.1254 & 0.1658 & 0.1867 & 0.2015 & 0.2239 & 0.2497 & 0.2747 & 58.7 \\
250 & 0.1072 & 0.1253 & 0.1645 & 0.1859 & 0.2031 & 0.2235 & 0.2517 & 0.2749 & 122.3 \\
300 & 0.1023 & 0.1174 & 0.1595 & 0.1795 & 0.1982 & 0.2199 & 0.2464 & 0.2716 & 211.2 \\
\hline
\end{tabular}
\caption{Quality improvement results for the average minimum quality (AMQ) for different percentages of the worst quality elements for randomized meshes. The execution times have been normalized in reference to the time required for the algorithm to process 492 initial elements. Note that the implementation is not optimized, and therefore the computational times should be viewed with this in mind. In particular, the implementation is based on an unoptimized region-to-region data structure.} \label{qi_results_2}
\end{center}
\end{table}
\begin{table}[h!]
\begin{center}
\begin{tabular}{| c| c| c| c| c| c| c| c|}
\hline
\multicolumn{2}{|c|}{} & \multicolumn{2}{|c|}{No.~Pentatopes} & \multicolumn{2}{|c|}{20\% AMG} \\ 
\hline
No.~Points & No. Flips & Initial & Final & Initial & Final \\
\hline
50 & 47 & 492 & 524 & 0.6781 & 0.6826 \\
100 & 125 & 1421 & 1545 & 0.6879 & 0.6899 \\
150 & 237 & 2443 & 2641 & 0.6787 & 0.6854 \\
200 & 350 & 3583 & 3841 & 0.6745 & 0.6852 \\
250 & 477 & 4767 & 5039 & 0.6736 & 0.6829 \\
300 & 608 & 6010 & 6296 & 0.6732 & 0.6796 \\
\hline
\end{tabular}
\caption{Information for $\eta^{(2)}$ quality improvement cases on randomized meshes.} \label{qi_results_eta_2}
\end{center}
\end{table}
\begin{table}[h!]
\begin{center}
\begin{tabular}{| c| c| c| c| c| c| c| c|}
\hline
\multicolumn{2}{|c|}{} & \multicolumn{2}{|c|}{No.~Pentatopes} & \multicolumn{2}{|c|}{20\% AMG} \\ 
\hline
No.~Points & No. Flips & Initial & Final & Initial & Final \\
\hline
50 & 34 & 492 & 452 & 0.1779 & 0.2027 \\
100 & 86 & 1421 & 1333 & 0.2123 & 0.2269 \\
150 & 156 & 2443 & 2287 & 0.1873 & 0.2020 \\
200 & 243 & 3583 & 3345 & 0.1785 & 0.1943 \\
250 & 330 & 4767 & 4417 & 0.1792 & 0.1954 \\
300 & 419 & 6010 & 5568 & 0.1752 & 0.1922 \\
\hline
\end{tabular}
\caption{Information for $\eta^{(3)}$ quality improvement cases on randomized meshes.} \label{qi_results_eta_3}
\end{center}
\end{table}
\begin{figure}[h!]
    \centering
\pgfplotstableread[row sep=\\,col sep=&]{
    interval & noFlip \\
    4--2     & 121    \\
    3--3     & 149    \\
    6--6     & 5      \\
    4--6     & 5      \\
    6--4     & 127    \\
    8--8     & 9      \\
    }\mydata
    $\vcenter{\hbox{
    \begin{tikzpicture}
        \begin{axis}[
                ybar,
                symbolic x coords={4--2,3--3,6--6,4--6,6--4,8--8},
                xtick=data,
                nodes near coords,
                ylabel={No. Executions},
            ]
            \addplot table[x=interval,y=noFlip]{\mydata};
        \end{axis}
    \end{tikzpicture}
    }}$
    \caption{Number of flips executed for a cloud of 300 random points for the $\eta^{(1)}$ quality heuristic.}
    \label{flip_history_one}
\end{figure}
\begin{figure}[h!]
    \centering
\pgfplotstableread[row sep=\\,col sep=&]{
    interval & noFlip \\
    2--4     & 17     \\
    4--2     & 91     \\
    3--3     & 241    \\
    4--8     & 13     \\
    3--9     & 14     \\
    6--6     & 12     \\
    2--8     & 2      \\
    6--12    & 9      \\
    12--6    & 1      \\
    4--6     & 108    \\
    6--4     & 73     \\
    4--12    & 9      \\
    12--4    & 1      \\
    8--8     & 13     \\
    8--16    & 13     \\
    }\mydata
    $\vcenter{\hbox{
    \begin{tikzpicture}
        \begin{axis}[
                ybar,
                symbolic x coords={2--4,4--2,3--3,4--8,3--9,6--6,2--8,6--12,12--6,4--6,6--4,4--12,12--4,8--8,8--16},
                xtick=data,
                nodes near coords,
                ylabel={No. Executions},
                width=15cm, height=8cm
            ]
            \addplot table[x=interval,y=noFlip]{\mydata};
        \end{axis}
    \end{tikzpicture}
    }}$
    \caption{Number of flips executed for a cloud of 300 random points for the $\eta^{(2)}$ quality heuristic.}
    \label{flip_history_two}
\end{figure}
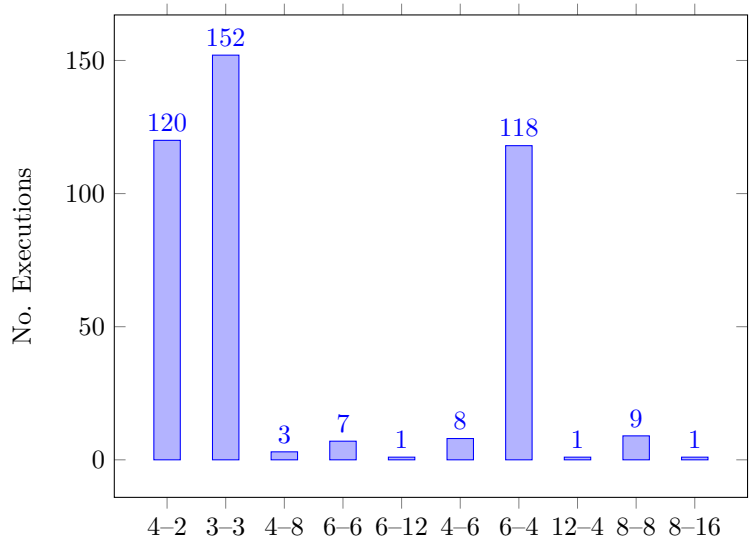
\begin{figure}[h!]
    \centering
\pgfplotstableread[row sep=\\,col sep=&]{
    interval & noFlip \\
    4--2     & 120    \\
    3--3     & 152    \\
    4--8     & 3      \\
    6--6     & 7      \\
    6--12    & 1      \\
    4--6     & 8      \\
    6--4     & 118    \\
    12--4    & 1      \\
    8--8     & 9      \\
    8--16    & 1      \\
    }\mydata
    $\vcenter{\hbox{
    \begin{tikzpicture}
        \begin{axis}[
                ybar,
                symbolic x coords={4--2,3--3,4--8,6--6,6--12,4--6,6--4,12--4,8--8,8--16},
                xtick=data,
                nodes near coords,
                ylabel={No. Executions},
                width=10cm, height=8cm
            ]
            \addplot table[x=interval,y=noFlip]{\mydata};
        \end{axis}
    \end{tikzpicture}
    }}$
    \caption{Number of flips executed for a cloud of 300 random points for the $\eta^{(3)}$ quality heuristic.}
    \label{flip_history_three}
\end{figure}

\pagebreak
\clearpage

\section{Conclusion}
\label{sec;conclusion}

This work establishes a clear link between anisotropic Delaunay meshing and space-time applications. In addition, it presents explicit implementation details for the anisotropic approach, including a robust method for 4D point insertion along with explicit descriptions of metric-weighted orientation and in-hypersphere geometric predicates. Furthermore, two new tesseract subdivisions, a new set of quality heuristics for pentatope elements, and new 4D bistellar flips for groupings (collections) of multiple pentatope elements have been presented. Finally, our algorithms were verified using a pair of numerical experiments: i) the point-insertion algorithm was used to generate a family of hypervolume meshes for a hypercylinder geometry, and the hypervolume errors of these meshes were shown to converge at the expected rate; ii) an algorithm for quality improvement was developed using the bistellar flips, and this algorithm was shown to consistently raise the average quality of the worst elements in a series of randomized meshes.  

In future work, 4D boundary recovery will be implemented in order to generate high-quality, \emph{constrained}, anisotropic Delaunay boundary meshes. This will be an important endeavor, especially for hypervolume mesh generation on non-convex domains. The work carried out in this paper, e.g.~developing a robust point-insertion algorithm and developing methods for mesh quality improvement, are important steps towards achieving the goal of fully-automatic and robust constrained hypervolume mesh generation in 4D.

\section*{Software Availability}

These algorithms were implemented as an extension of the JENRE$^{\text{\textregistered}}$  Multiphysics Framework~\cite{Cor19_SCITECH}, which is United States government-owned software developed by the Naval Research Laboratory with other collaborating institutions. This software is not available for public use or dissemination.

\section*{Declaration of Competing Interests}

The authors declare that they have no known competing financial interests or personal relationships that could have appeared to influence the work reported in this paper.

\section*{Funding}

This research received funding from the United States Naval Research Laboratory (NRL) under grant number N00173-22-2-C008. In turn, the NRL grant itself was funded by Steven Martens, Program Officer for the Power, Propulsion and Thermal Management Program, Code 35, in the United States Office of Naval Research.

\pagebreak
\clearpage

\appendix

\section{Minimizing Roughness and Anisotropic Delaunay Meshes} \label{appendix_rough}

In this section, we will prove that an anisotropic Delaunay triangulation has minimal roughness (and is therefore optimal) for the space-time problem which was introduced in section~\ref{optimal_sub_sec}.

\subsection{Set-up}

For theoretical purposes, it is useful to generate anisotropic Delaunay meshes through a simple, iterative process. With this in mind, we consider a variation of the well-known Local Optimization Procedure (LOP), originally developed by Lawson~\cite{lawson1977software}. Suppose that we have an initial, non-degenerate triangulation of a set of points. This triangulation does not need to be Delaunay. We can then loop over the edges in the triangulation, and carry out the following procedure:

\begin{enumerate}
    \item Determine if the edge is shared by a pair of triangles.  If so, determine whether or not these triangles form a strictly convex quadrilateral. 
    \item Identify the midpoint $v$ of the edge. 
    \item For the triangles in step 1, check to see if the two opposite vertices are contained within their circumcircles. This check should be performed using a metric-weighted predicate, based on the metric defined at $v$
    \begin{align*}
        M_v = M(x_v, t_v) = \begin{bmatrix}
            1 & 0 \\
            0 & \wavespeed_{v}^{2}
        \end{bmatrix},
    \end{align*}
    where $\wavespeed_v = \wavespeed(x_v,t_v)$.
    \item If the circumcircle criterion is violated, flip the edge inside of the quadrilateral.
\end{enumerate}
This algorithm will produce an anisotropic Delaunay mesh, whose anisotropy is controlled by the values of the metric field evaluated at the mesh points.

\subsection{Simplified Analysis}

In what follows, we take inspiration from the approach of Rippa~\cite{rippa1990minimal} and Powar~\cite{powar1992minimal}, and perform our analysis on a pair of triangles. Rippa and Powar proved optimality of an isotropic Delaunay triangulation by proving optimality of each pair of triangles resulting from the flip operation in step 4. In our case, we can consider two triangles which form a strictly convex quadrilateral, prove optimality of these triangles in conjunction with an \emph{anisotropic} circumcircle criterion, and then extend this result to the entire mesh via LOP. Towards this end, we introduce a generic set of vertices $u_1 = [x_1, t_1]^{T}, u_2 = [x_2, t_2]^{T}, u_3 = [x_3, t_3]^{T}$, and $u_4 = [x_4, t_4]^{T}$ which form a strictly convex quadrilateral, as shown in Figure~\ref{triangle_map_fig}.
\begin{figure}[h!]
\centering
\includegraphics[width=11cm]{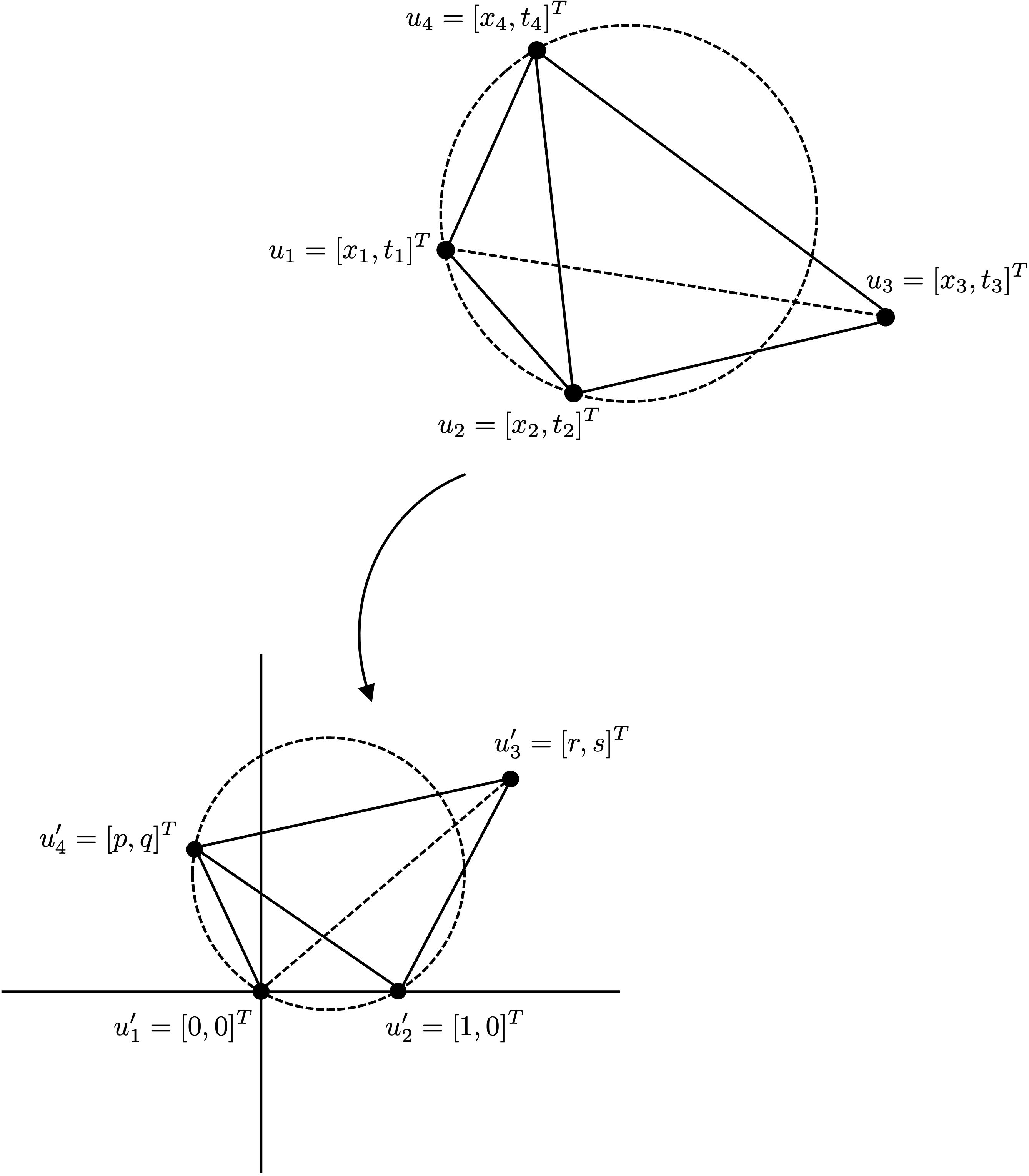}
\caption{A pair of triangles that form a strictly convex quadrilateral (top) and the mapped version of these triangles (bottom). This image is adapted from Figure 1 of~\cite{powar1992minimal}.}
\label{triangle_map_fig}
\end{figure}

These vertices can always be transformed by an affine transformation, such that we obtain mapped points $u_{1}' = [0,0]^{T}, u_{2}' = [1,0]^{T}, u_{3}' = [r,s]^{T}$ and $u_{4}' = [p,q]^{T}$. The necessary mapping is defined as follows
\begin{align*}
    &u_{i}' = Au_{i} + b, \qquad i = 1,2,3,4 \\[1.0ex]
    &A = \frac{1}{\left\| u_{2} - u_{1} \right\|_{M_v}} \begin{bmatrix}
        \cos \theta & -\sin\theta \\[1.0ex]
        \sin \theta & \cos \theta
    \end{bmatrix}, \qquad
    b = -u_1, \\[1.0ex]
    & \left\| u_{2} - u_{1} \right\|_{M_v} = \sqrt{(u_{2}-u_{1})^{T} M_{v} (u_{2}-u_{1})},
\end{align*}
where $\theta$ is the counter-clockwise angle required to rotate the edge $u_2u_1$ to align with the horizontal. The locations of the points $u_{1}',u_{2}',u_{3}'$, and $u_{4}'$ are shown in Figure~\ref{triangle_map_fig}.

We can now define two sets of triangles based on the points $u_{1}',u_{2}',u_{3}'$, and~$u_{4}'$
\begin{align*}
    T_{1,\mathbb{T}} &= \left(u_{1}', u_{2}', u_{4}'  \right), \qquad T_{2,\mathbb{T}} = \left(u_{2}', u_{3}', u_{4}'  \right), \\[1.0ex]
    T_{1,\mathbb{T}^{\ast}} &= \left(u_{1}', u_{2}', u_{3}'  \right), \qquad T_{2,\mathbb{T}^{\ast}} = \left(u_{1}', u_{3}', u_{4}'  \right).
\end{align*}
These triangles compose two triangulations: $\mathbb{T} = \left\{T_{1,\mathbb{T}}, T_{2,\mathbb{T}} \right\}$ and $\mathbb{T}^{\ast} = \left\{T_{1,\mathbb{T}^{\ast}}, T_{2,\mathbb{T}^{\ast}} \right\}$. 
The triangle areas are
\begin{align*}
    \left| T_{1,\mathbb{T}} \right| &= \frac{1}{2} q, \qquad \left| T_{2,\mathbb{T}} \right| = \frac{1}{2}\left(rq-ps +s -q \right), \\[1.0ex]
    \left| T_{1,\mathbb{T}^{\ast}} \right| &= \frac{1}{2} s, \qquad \left| T_{2,\mathbb{T}^{\ast}} \right| = \frac{1}{2}\left(rq-ps \right).
\end{align*}

\subsection{Roughness and Optimality}

We can define the `roughness' of a generic, weakly-differentiable function $g(x,t)$ on the triangulation $\mathbb{T}$ as follows
\begin{align}
    \left| g \right|_{\mathbb{T}}^{2} = \sum_{j=1}^{2} \left| g\right|^{2}_{T_{j,\mathbb{T}}}, \qquad \left| g\right|^{2}_{T_{j,\mathbb{T}}} = \int_{T_{j}} \left( \wavespeed_{v} \left( \frac{\partial g}{\partial x} \right)^2 + \frac{1}{\wavespeed_{v}} \left( \frac{\partial g}{\partial t} \right)^2 \right) dx dt. \label{rough_formula}
\end{align}
In accordance with the standard approach (see Powar~\cite{powar1992minimal}), we will only consider the case where $g(x,t)$ is a simple, piecewise linear function. In particular, we seek to understand the roughness of piecewise linear functions on our triangulations $\mathbb{T}$ and $\mathbb{T}^{\ast}$.

\begin{thm}
    An anisotropic Delaunay triangulation $\mathbb{T}$ yields a smaller value of roughness than any other triangulation $\mathbb{T}^{\ast}$. More precisely,
    \begin{align*}
        \left| H \right|_{\mathbb{T}^{\ast}}^{2} - \left| G \right|_{\mathbb{T}}^{2} \geq 0,
    \end{align*}
    where $H$ and $G$ are piecewise linear functions on the triangulations $\mathbb{T}^{\ast}$ and $\mathbb{T}$, respectively. 
\end{thm}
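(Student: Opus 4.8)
The plan is to reduce the anisotropic statement to the classical isotropic result of Rippa~\cite{rippa1990minimal} and Powar~\cite{powar1992minimal}, exploiting the fact that the affine map $u\mapsto u'$ was constructed precisely to absorb the metric $M_v$. First I would verify that, under this map, the anisotropic roughness of Eq.~\eqref{rough_formula} is converted into the ordinary Dirichlet energy in the mapped coordinates. It is convenient to realize the map as a composition of the anisotropic scaling $(x,t)\mapsto(x,\wavespeed_v t)$, which is an isometry from the $M_v$-metric onto the Euclidean metric, followed by a planar similarity that normalizes the edge $u_1u_2$; I would check that this composition agrees with the stated $A$ and $b$. Tracking the integrand through the scaling, the chain rule turns $\wavespeed_v(\partial_x g)^2+\wavespeed_v^{-1}(\partial_t g)^2$ into $\wavespeed_v\big[(\partial_{\tilde x}\tilde g)^2+(\partial_{\tilde t}\tilde g)^2\big]$, and the Jacobian $dx\,dt=\wavespeed_v^{-1}\,d\tilde x\,d\tilde t$ cancels the leftover factor of $\wavespeed_v$, producing exactly the isotropic Dirichlet integrand. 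Since a planar similarity leaves the Dirichlet energy invariant, $|G|_{\mathbb{T}}^2$ and $|H|_{\mathbb{T}^*}^2$ equal the isotropic Dirichlet energies of the corresponding interpolants on the mapped triangulations, so it suffices to prove the inequality in the mapped coordinates.

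Second, I would confirm that the $M_v$-weighted in-circle test used in the LOP coincides with the ordinary empty-circumcircle test in the mapped coordinates. This is immediate from the same isometry: the scaling $(x,t)\mapsto(x,\wavespeed_v t)$ carries $M_v$-circles to Euclidean circles and the subsequent similarity preserves circles. Consequently $\mathbb{T}$, being anisotropic Delaunay, becomes a genuine Euclidean Delaunay triangulation of $\{u_1',u_2',u_3',u_4'\}$, so its diagonal $u_2'u_4'$ satisfies the empty-circumcircle condition (the circumcircle of $(u_1',u_2',u_4')$ excludes $u_3'$), while the competing diagonal $u_1'u_3'$ of $\mathbb{T}^*$ does not.

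Third, with both triangulations sharing the four corner vertices and the energy reduced to the isotropic form, I would invoke the local computation of Rippa and Powar. Taking $G$ and $H$ to be the piecewise-linear interpolants of a common set of nodal values at the corners, I would express the difference $|H|_{\mathbb{T}^*}^2-|G|_{\mathbb{T}}^2$ explicitly in terms of the coordinates $r,s,p,q$ and the nodal data, using the triangle areas already recorded before the theorem. Following Powar, this difference factors into a manifestly nonnegative quantity (a perfect square in the nodal data divided by a product of the positive areas) times a geometric factor whose sign is governed by the empty-circumcircle indicator of the diagonal. The Delaunay property established in the second step fixes this indicator with the correct sign, giving $|H|_{\mathbb{T}^*}^2-|G|_{\mathbb{T}}^2\ge 0$. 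A standard LOP-propagation argument then lifts the two-triangle inequality to the whole mesh.

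The main obstacle I anticipate is the third step: carrying out the explicit algebraic factorization of the energy difference and verifying that its sign-determining factor is precisely the anisotropic circumcircle predicate satisfied by $\mathbb{T}$. The change-of-variables bookkeeping in the first step is routine once the map is split into a scaling and a similarity, and the in-circle equivalence in the second step is essentially definitional; the genuine work lies in reproducing Powar's identity and matching its geometric factor to the $M_v$-weighted Delaunay condition.
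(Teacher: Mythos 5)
Your proposal is correct and follows essentially the same route as the paper: both reduce to the Rippa--Powar two-triangle computation, factor the roughness difference $\left| H \right|_{\mathbb{T}^{\ast}}^{2} - \left| G \right|_{\mathbb{T}}^{2}$ into a positive product of areas, a perfect square (the coplanarity condition), and a sign-determining factor identified with the metric-weighted circumcircle predicate, and then propagate the local inequality through the LOP. The only small caveat is that the paper's stated map $A$, $b$ is just a similarity normalized by the $M_v$-length of the edge $u_1u_2$ and does \emph{not} yet include the anisotropic scaling $(x,t)\mapsto(x,\wavespeed_v t)$ --- that scaling is applied only later (the $u_i'\Rightarrow u_i''$ step) when the factor $C$ is matched to the metric-weighted in-circle test --- so your check that ``this composition agrees with the stated $A$ and $b$'' would reveal a mismatch, although your own scale-then-similarity construction is self-consistent and arrives at the same factorization and conclusion.
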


\begin{proof}
Consider the class of piecewise linear functions which interpolate the values $f_1, f_2, f_3,$ and $f_4$ at the points $u_{1}',u_{2}',u_{3}'$, and $u_{4}'$. There are only two ways in which we can tessellate the four points (as mentioned previously), and therefore, only two piecewise linear functions can be defined. Let us denote these two functions by $G$ and $H$. They can be defined as follows
\begin{align*}
    G_{T_1} &\equiv \frac{\widetilde{a}_{1,\mathbb{T}}}{\sqrt{\wavespeed_{v}}} x + \sqrt{\wavespeed_{v}} \, \widetilde{b}_{1,\mathbb{T}} t + \widetilde{c}_{1,\mathbb{T}}, \qquad
        G_{T_2}  \equiv \frac{\widetilde{a}_{2,\mathbb{T}}}{\sqrt{\wavespeed_{v}}} x + \sqrt{\wavespeed_{v}} \, \widetilde{b}_{2,\mathbb{T}} t + \widetilde{c}_{2,\mathbb{T}}, \\[1.0ex]
        H_{T_1} &\equiv \frac{\widetilde{a}_{1,\mathbb{T}^{\ast}}}{\sqrt{\wavespeed_{v}}} x + \sqrt{\wavespeed_{v}} \, \widetilde{b}_{1,\mathbb{T}^{\ast}} t + \widetilde{c}_{1,\mathbb{T}^{\ast}}, \qquad
        H_{T_2} \equiv \frac{\widetilde{a}_{2,\mathbb{T}^{\ast}}}{\sqrt{\wavespeed_{v}}} x + \sqrt{\wavespeed_{v}} \, \widetilde{b}_{2,\mathbb{T}^{\ast}} t + \widetilde{c}_{2,\mathbb{T}^{\ast}},
\end{align*}
where
\begin{align*}
    &\widetilde{a}_{1,\mathbb{T}} = \sqrt{\wavespeed_v} f_2, \qquad \widetilde{b}_{1,\mathbb{T}} = \frac{f_4 - p f_2}{\sqrt{\wavespeed_v}q}, \\[1.0ex]
     &\widetilde{a}_{2,\mathbb{T}} =  \frac{\sqrt{\wavespeed_v}\left(q(f_3-f_2)-s(f_4-f_2) \right)}{m}, \qquad \widetilde{b}_{2,\mathbb{T}} = \frac{(r-1)(f_4-f_2) - (p-1)(f_3-f_2)}{\sqrt{\wavespeed_v}m},\\[1.0ex]
    &\widetilde{a}_{1,\mathbb{T}^{\ast}} = \sqrt{\wavespeed_v} f_2, \qquad \widetilde{b}_{1,\mathbb{T}^{\ast}} = \frac{f_3 - r f_2}{\sqrt{\wavespeed_v}s}, \\[1.0ex]
    & \widetilde{a}_{2,\mathbb{T}^{\ast}} = \frac{\sqrt{\wavespeed_v}\left(q f_3 - s f_4\right)}{rq-ps}, \qquad \widetilde{b}_{2,\mathbb{T}^{\ast}} = \frac{rf_4-pf_3}{\sqrt{\wavespeed_v}\left(rq-ps\right)},
\end{align*}
and where
\begin{align*}
    m = rq-ps +s -q = 2 \left| T_{2,\mathbb{T}} \right|.
\end{align*}
The roughness of the piecewise linear function on $\mathbb{T}$ is given by $\left| G \right|_{\mathbb{T}}$, and similarly, the roughness on $\mathbb{T}^{\ast}$ is given by $\left| H \right|_{\mathbb{T}^{\ast}}$. As a result, the relative roughness of the piecewise linear functions can be computed as follows
\begin{align}
    \left| H \right|_{\mathbb{T}^{\ast}}^{2} - \left| G \right|_{\mathbb{T}}^{2} = \sum_{j=1}^{2} \left[ \int_{T_j, \mathbb{T}^{\ast}} \left( \left( \widetilde{a}_{j,\mathbb{T}^{\ast}} \right)^2 + \left( \widetilde{b}_{j,\mathbb{T}^{\ast}} \right)^2 \right) dx dt - \int_{T_j, \mathbb{T}} \left( \left( \widetilde{a}_{j,\mathbb{T}} \right)^2 + \left( \widetilde{b}_{j,\mathbb{T}} \right)^2 \right) dx dt \right]. \label{relative_rough}
\end{align}
By inspection, there is a direct relationship between minimizing roughness (Eq.~\eqref{rough_formula}), and minimizing the energy functional $\mathcal{J}(\widetilde{v}_h)$, (Eq.~\eqref{energy_function_new}). Therefore, we seek to find a triangulation $\mathbb{T}$ which has the smallest roughness, i.e., a triangulation for which the roughness is less than any other triangulation $\mathbb{T}^{\ast}$. In order for this to hold, the difference $\left| H \right|_{\mathbb{T}^{\ast}}^{2} - \left| G \right|_{\mathbb{T}}^{2}$ must always be non-negative. With this in mind, we can expand Eq.~\eqref{relative_rough} as follows
\begin{align*}
    \left| H \right|_{\mathbb{T}^{\ast}}^{2} - \left| G \right|_{\mathbb{T}}^{2} &= \left(  \frac{1}{2} s \right) \left( \left(\sqrt{\wavespeed_v} f_2 \right)^2 + \left( \frac{f_3 - r f_2}{\sqrt{\wavespeed_v}s}\right)^2 \right) \\[1.0ex] &+ \left(\frac{1}{2}\left(rq-ps \right)\right) \left( \left( \frac{\sqrt{\wavespeed_v}\left(q f_3 - s f_4\right)}{rq-ps}\right)^2 + \left(\frac{rf_4-pf_3}{\sqrt{\wavespeed_v}\left(rq-ps\right)} \right)^2 \right) \\
    &-\left(\frac{1}{2} q\right) \left( \left(\sqrt{\wavespeed_v} f_2 \right)^2 + \left(\frac{f_4 - p f_2}{\sqrt{\wavespeed_v}q}\right)^2 \right) \\
    &-\left( \frac{1}{2}m \right) \left( \left(\frac{\sqrt{\wavespeed_v}\left(q(f_3-f_2)-s(f_4-f_2) \right)}{m} \right)^2 + \left(\frac{(r-1)(f_4-f_2) - (p-1)(f_3-f_2)}{\sqrt{\wavespeed_v}m}\right)^2 \right) \\
    &=\frac{\left(q f_3 + (ps - rq)f_2 - s f_4  \right)^{2} \left(ps(1-p) - \wavespeed_{v}^{2} q^{2}s + q(\wavespeed_{v}^{2}s^2 + r^2 -r) \right)}{2\wavespeed_v q m s(rq-ps)} \\
    &= A \cdot B \cdot C,
\end{align*}
where 
\begin{align*}
    A &= \frac{1}{2 \wavespeed_v m s(rq-ps)}, \qquad B = \left(q f_3 + (ps - rq)f_2 - s f_4  \right)^{2}, \\ C &= \frac{ps(1-p) - \wavespeed_{v}^{2} q^{2}s + q(\wavespeed_{v}^{2}s^2 + r^2 -r)}{q}.
\end{align*}
Here, $A$ is positive, as it can be rewritten as follows
\begin{align*}
    A = \frac{1}{16 \wavespeed_v \left| T_{2,\mathbb{T}} \right| \left| T_{1,\mathbb{T}^{\ast}} \right| \left| T_{2,\mathbb{T}^{\ast}} \right|}
\end{align*}
In accordance with Powar~\cite{powar1992minimal}, $B$ is a coplanarity condition, and is always non-negative---only equaling zero if all the mapped points are coplanar. Finally, $C$ is the circumcircle criterion that governs triangle $T_{1,\mathbb{T}}$ and point $u_3'$. In particular, the triangulation $\mathbb{T}$ is an anisotropic Delaunay triangulation if $u_3'$ is outside of the metric-weighted circumcircle of $T_{1,\mathbb{T}}$. In order to see this, we first map our points into metric space
\begin{align*}
    u_1' \Rightarrow u_1'' = [0,0]^{T}, \qquad u_2' \Rightarrow u_2'' = [1,0]^{T}, \qquad u_3'  \Rightarrow u_3'' = [r, \wavespeed_v s]^{T}, \qquad u_4'  \Rightarrow u_4'' = [p,\wavespeed_v q]^{T}.
\end{align*}
Next, we compute the following expressions for the metric-weighted circumcenter $o_{M}$ and circumradius $r_{M}$ of $T_{1,\mathbb{T}}$
\begin{align*}
    o_{M} = [o_x, o_t]^{T} = \left[\frac{1}{2}, \frac{p^2 + \wavespeed_{v}^{2} q^2 -p}{2 \wavespeed_v q} \right]^{T}, \qquad r_{M} = \sqrt{\left(\frac{1}{2}\right)^2 + \left(\frac{p^2 + \wavespeed_{v}^{2} q^2 -p}{2 \wavespeed_v q} \right)^2}.
\end{align*}
The circumcircle criterion can be written as follows
\begin{align*}
    \left(o_{M} - u_3''\right)^{T} \left(o_{M} - u_3''\right) - r_{M}^{2} \geq 0,
\end{align*}
where $u_3''$ is outside of the circumcircle if this expression (above) yields a positive value. Upon performing some simple algebraic manipulations, we can show that
\begin{align*}
    &\left(o_{M} - u_3''\right)^{T} \left(o_{M} - u_3''\right) - r_{M}^{2}
    =  \frac{-rq + q r^2 - (p^2 + \wavespeed_{v}^{2} q^2 - p) s + \wavespeed_{v}^{2} s^2 q}{q} \\
    &= \frac{ps(1-p) - \wavespeed_{v}^{2} q^{2}s + q(\wavespeed_{v}^{2}s^2 + r^2 -r)}{q} = C \geq 0.
\end{align*}
In summary, $A$, $B$, and $C$ are all non-negative if (and only if) the triangulation $\mathbb{T}$ is an anisotropic Delaunay triangulation. Therefore, the relative roughness is guaranteed to be non-negative. Furthermore, it immediately follows that the anisotropic Delaunay triangulation has minimal roughness. 
\end{proof}

\pagebreak
\clearpage

{\scriptsize\bibliography{technical-refs}}

\end{document}